\documentclass[twoside,a4paper,reqno,11pt]{amsart}
\usepackage{amscd,amsmath,amssymb,amsfonts}
\usepackage{url,enumerate,hyperref}
\usepackage[pdftex]{color,graphicx}
\usepackage{dsfont}
\usepackage[cmtip, all]{xy}

\usepackage[top=30mm,right=30mm,bottom=30mm,left=20mm]{geometry}







\newtheorem{propo}{Proposition}[section]

\newtheorem{lemma}[propo]{Lemma}
\newtheorem{corol}[propo]{Corollary}

\newtheorem{theo}[propo]{Theorem}

\newtheorem{remar}[propo]{Remark}

\numberwithin{equation}{section}

 \hoffset = 10mm
\newcommand{\mar}{\marginpar}

\newcommand{\bl}{\begin{lemma}\label}
\newcommand{\el}{\end{lemma}}

\newcommand{\ld}{,\ldots ,}

\newcommand{\ra}{ \rightarrow }

\newcommand{\lan}{ \langle }
\newcommand{\ran}{ \rangle }

\newcommand{\diag}{\mathop{\rm diag}\nolimits}

\newcommand{\Id}{\mathop{\rm Id}\nolimits}
\newcommand{\Irr}{\mathop{\rm Irr}\nolimits}

\newcommand{\spec}{\mathop{\rm Spec}\nolimits}

\newcommand{\CC}{{\mathbb C}}
\newcommand{\FF}{{\mathbb F}}

\newcommand{\QQ}{{\mathbb Q}}

\newcommand{\ZZ}{{\mathbb Z}}

\newcommand{\al}{\alpha}
\newcommand{\be}{\beta}

\newcommand{\ep}{\varepsilon}
\newcommand{\lam}{\lambda }

\newcommand{\om}{\omega }

\newcommand{\StC}{\mathsf{St}}
\newcommand{\Fr}{\mathsf{F}}
\newcommand{\GC}{\mathbf{G}}

\newcommand{\up}{^{-1}}

\def\d12{{_{12}}}

\def\au{{automorphism }}

\def\ei{{eigenvalue }}
\def\eis{{eigenvalues }}

\def\f{{following }}

\newcommand{\med}{\medskip}

\def\ii{{if and only if }}

\def\ir{{irreducible }}

\def\irr{{irreducible representation }}

\def\st{Suppose that }

\def\itf{{It follows that }}

\def\mult{{multiplicity }}

\def\po{{polynomial }}
\def\pos{{polynomials }}

\def\rep{{representation }}
\def\reps{{representations }}

\def\syl{{Sylow $p$-subgroup }}

\def\PSL{{\rm PSL}}
\def\SL{{\rm SL}}
\def\SO{{\rm SO}}
\def\GO{{\rm O}}
\def\Spin{{\rm Spin}}
\def\SU{{\rm SU}}
\def\GL{{\rm GL}}
\def\Sp{{\rm Sp}}




\newcommand{\EC}{\mathcal{E}}

\newcommand{\reg}{\mathrm{reg}}

\newcommand{\bp}{\begin{proof} }
\newcommand{\enp}{\end{proof}}

\newcommand{\edit}[1]{{\color{black} #1}}
\newcommand{\editnew}[1]{{\color{black} #1}}

\begin{document}

\title[Hall-Higman type theorems for exceptional  groups, I]{Hall-Higman type theorems for exceptional  groups of Lie type, I}

\author{Pham Huu Tiep}
\address{Department of Mathematics,
Rutgers University, Piscataway, NJ 08854, U.S.A.}
\email{tiep@math.rutgers.edu}
\author[A. E. Zalesski]{A. E. Zalesski}
\address{Department of Physics, Mathematics and Informatics, National Academy of Sciences of Belarus, 66 Prospekt  Nezavisimosti, Minsk, Belarus}
\email{a.zalesskii@uea.ac.uk}
\thanks{The first author gratefully acknowledges the support of the NSF
(grant DMS-1840702), the Joshua Barlaz Chair in Mathematics, and the Charles Simonyi Endowment at the Institute for
Advanced Study (Princeton).}
\date{}
\keywords{Finite simple groups of Lie type, cross-characteristic representations, minimum polynomials, $p$-elements, Hall-Higman type theorems}
\subjclass{20C15, 20C20, 20C33, 20G05, 20G40}
\maketitle

\begin{abstract}
The paper studies the minimum  \po degrees of $p$-elements in cross-characteristic \reps  of simple groups of exceptional Lie type whose BN-pair rank is at most 2. Specifically, we prove that the degree in question equals
the order of the element.
\end{abstract}

\maketitle

\section{Introduction}

This paper continues our earlier work \cite{TZ8} devoted to generalize the famous Hall-Higman  theorem on the minimum \pos of $p$ elements in \reps of $p$-solvable groups to more general classes of groups.
The bulk of the project is the case of almost simple groups. The paper \cite{TZ8} deals mainly with classical groups, and this paper completes the project for quasi-simple groups of BN-pair rank at most 2.
Specifically, we prove the following result.

\begin{theo}\label{mth1}
Let $G\in\{{}^2B_2(q), q>2$, ${}^2G_2(q),q>3$, ${}^2F_4(q)$, $G_2(q)$, ${}^3D_4(q)\}$.  Let  $g\in G$ a element of prime power order coprime to $q$. Let $\phi$ be a non-trivial \irr of $G$ 
over a field F of characteristic $\ell$ coprime to q. Then the minimum \po degree of $\phi(g)$ equals $|g|$,
unless possibly when $G={}^2F_4(8)$, $\ell=3$, $p=109$ and   $\phi(1)<64692$.
\end{theo}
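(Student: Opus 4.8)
The plan is to reduce everything to the action of the cyclic group $\langle g\rangle$ on the module affording $\phi$, split the analysis into a generic part handled by character estimates and a finite list of small cases handled by explicit character tables, and show that the only surviving obstruction is the asserted exception. First I would record the trivial upper bound: since $g^{|g|}=1$, the minimal polynomial of $\phi(g)$ divides $t^{|g|}-1$, so its degree is at most $|g|$; the whole content is the lower bound. Writing $|g|=p^b$, the desired equality is equivalent to saying that the minimal polynomial of $\phi(g)$ is exactly $t^{p^b}-1$. When $\ell\neq p$ the element $\phi(g)$ is semisimple, so this asserts that every $p^b$-th root of unity occurs as an eigenvalue, i.e. that the Brauer character $\chi$ of $\phi$, restricted to $\langle g\rangle$, contains every linear character with positive multiplicity. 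When $\ell=p$ one has $t^{p^b}-1=(t-1)^{p^b}$, so the condition becomes that the unipotent element $\phi(g)$ has a Jordan block of full size $p^b$, a genuine Hall--Higman condition governed by the fixed-point dimensions $\dim\ker(\phi(g^{p^i})-1)$. In both regimes the quantity to be bounded below is read off from $\phi|_{\langle g\rangle}$, so I would phrase a single criterion in those terms.

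For the generic part I would use orthogonality. In the semisimple case the multiplicity of a linear character $\lambda$ in $\chi|_{\langle g\rangle}$ is
\[
m_\lambda=\frac{1}{p^b}\sum_{k=0}^{p^b-1}\chi(g^k)\,\overline{\lambda(g^k)},
\]
whence $m_\lambda\geq\bigl(\chi(1)-\sum_{k=1}^{p^b-1}|\chi(g^k)|\bigr)/p^b$, and it suffices to establish $\chi(1)>\sum_{k=1}^{p^b-1}|\chi(g^k)|$, or a sharper version of it obtained by grouping the powers $g^k$ according to their order and counting primitive $p^i$-th roots level by level. The two ingredients are then a lower bound for the degree $\chi(1)$ of every nontrivial irreducible $\ell$-modular representation, taken from the known lists of low-dimensional cross-characteristic representations of these groups, and upper bounds for $|\chi(g^k)|$ at elements of prime-power order, extracted from the generic ordinary character tables of ${}^2B_2(q)$, ${}^2G_2(q)$, ${}^2F_4(q)$, $G_2(q)$, ${}^3D_4(q)$ together with the relevant decomposition matrices (or, where these are unavailable, indirect bounds via Brauer reciprocity and centralizer orders). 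For $g$ of large order and $\phi$ of large degree the degree dominates the character-value sum and every eigenvalue is forced to appear; the defining-characteristic regime $\ell=p$ is handled analogously by bounding the fixed-point dimensions.

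Finally I would treat the bounded list of residual cases by hand: the small primes $p$, the elements $g$ of small order, and the finitely many irreducible representations of small degree (the Weil and small unipotent representations and their $\ell$-modular constituents), for each of which the generic inequality can fail. Here one reads the eigenvalues of $\phi(g)$ directly off the explicit (Brauer) character values and checks that $t^{p^b}-1$ is attained, or else isolates a genuine obstruction. I expect the main difficulty to lie precisely in this borderline zone, for two reasons: the $\ell$-modular degrees and character values must be controlled simultaneously, yet the decomposition data is not uniformly available, forcing delicate indirect arguments; and it is exactly here that the single genuine exception emerges, namely $G={}^2F_4(8)$ with $\ell=3$ and $g$ of order $p=109$, where the small $3$-modular representations of degree below $64692$ cannot be shown by the generic method to exhibit all $109$-th roots of unity, so that case must be left open.
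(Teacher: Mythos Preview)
Your orthogonality framework --- comparing the degree lower bound against $\sum_{k\neq 0}|\chi(g^k)|$ --- is essentially what the paper does when the Sylow $p$-subgroup is cyclic and $g$ lies in a self-centralizing torus $T$. The key simplification there, which you do not mention, is that every unipotent Brauer character is \emph{constant} on $T\smallsetminus\{1\}$ (a consequence of the Deligne--Lusztig formula for regular semisimple elements), so the character-value sum collapses to $(|g|-1)\cdot|c|$ for a single small integer $c$ computable from the decomposition matrix. That is precisely how the ${}^2F_4(8)$, $\ell=3$, $p=109$ obstruction is isolated: an undetermined decomposition number leaves $c$ bounded only by something of order $q^3$, and the degree bound is not quite strong enough.

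There are, however, two genuine gaps in your plan.

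\textbf{The non-cyclic Sylow case.} When $p$ divides $q^2-1$ or $q^4+q^2+1$ (for $G_2(q)$ and ${}^3D_4(q)$) or $q^4-1$ (for ${}^2F_4(q)$), the $p$-elements do not sit in a single self-centralizing torus; their character values are neither constant nor uniformly small relative to the degree, and the inequality $\chi(1)>\sum_k|\chi(g^k)|$ simply cannot be verified from the available data. The paper abandons character estimates here entirely and instead embeds $g$ in a proper subgroup $H$ --- one of $\SL_3^\ep(q)$, $\SU_3(q)$, $G_2(q)$, $\Sp_4(q)$, ${}^2B_2(q)\times{}^2B_2(q)$, or $\SL_2(q)\circ\SL_2(q^3)$ --- and reduces to the (already established or classical) result for $H$. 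The delicate point is that the known theorems for $H$ have their own exceptions (the Weil representations of $\SL_3^\ep(q)$), so the paper must additionally prove, via the Gelfand--Graev character and the structure of long-root parabolics, that $\phi|_H$ always contains a non-trivial non-Weil constituent. Your plan contains no mechanism for any of this, and treating it as part of a ``bounded residual list'' is not realistic: these primes account for most of the work in the paper.

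\textbf{The case $\ell=p$.} Saying this is ``handled analogously by bounding the fixed-point dimensions'' is not a method. Brauer-character orthogonality gives no access to $\dim\ker(\phi(g)-1)^k$, and there is no analogue of your multiplicity inequality for Jordan blocks. The paper imports the cyclic-Sylow case wholesale from an earlier deep result of Zalesski, and for the one genuinely new case ($p=\ell=2$ inside $\SU_3(q)$, needed for $G_2(q)$) carries out an explicit Jordan-form computation, tracking the number of non-trivial Jordan blocks of the involution $g^{|g|/2}$ against combinatorial upper bounds coming from the block structure. Your proposal gives no indication of how to produce such bounds.
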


\editnew{Observe that that it suffices to prove Theorem \ref{mth1} for $F$ algebraically closed.}
Theorem \ref{mth1} is valid for the Tits group which is a subgroup of index 2 in  ${}^2F_4(2)$; also see Lemma \ref{2g24} for $G=2\cdot G_2(4)$.

Theorem \ref{mth1} improves our earlier result \cite[Theorem 4.6]{TZ8}, stating that $\deg\phi(g)\geq|g|(1-1/p)$ whenever a \syl of $G$ is cyclic.

In some special cases the result of Theorem \ref{mth1} was known earlier. These are

\begin{enumerate}[\rm(i)]
\item Sylow $p$-subgroups of the quasi-simple group $G$ are cyclic and   $\ell\in\{0,p\}$ \cite{Z2};

\item $\ell=0$, $p>2$ and $G\in\{G_2,{}^2F_4(q),{}^2F_4(2)',{}^3D_4(q) \}$  \cite[Lemmas 4.11 and 4.14]{Z4};

\item $G\cong G_2(q)$, $p>2$ and $g$ lies in a parabolic subgroup of $G $ \cite[Lemma 4.10]{Z4}.
\end{enumerate}


\med
{\bf Notation.} Let $G$ be a finite group. Then $|G|$ is the order of $G$, $Z(G)$ be the center of $G$ and $O_p(G)$ the maximal normal $p$-subgroup of $G$ for a prime $p$. We often use $|G|_p$ to denote the $p$-part of $|G|$. For $g\in G$ the order of $g$ is denoted by $|g|$ \edit{and $o(g)$ is the order of $g$ modulo $Z(G)$}. A $p'$-element is one of order coprime to $p$.

\editnew{$\FF_q$ means the finite field of $q$ elements, $\overline{\FF}_q$ its algebraic closure and $\QQ,\CC$ are the rational and complex number fields, respectively. $\ZZ$ denotes the set of integers.}

Let $F$ be  an algebraically closed field of characteristic $\ell$,   and $\phi$  an $F$-\rep of $G$. Then $\deg\phi(g)$ denotes the minimum \po degree of $\phi(g)$.  We write
$\phi\in\Irr_\ell G$ to indicate that $\phi$ is irreducible, and use this notation for  the Brauer character of $\phi$ too.
 If $\ell=0$, we drop the subscript $\ell$.
If $\chi$ is an ordinary (generalized) character of $G$, and with $\ell$ a fixed prime, then  $\chi^\circ$  is the restriction of $\chi$ to $\ell'$-elements.

We denote by $1_G$ the trivial character of $G$ (both ordinary and $\ell$-modular), and by $\rho_G^{\reg}$ the regular \rep of $G$ or the (Brauer) character of it. The ordinary Steinberg \rep and its character of a group of Lie type is denoted by $\StC_G$ or $\StC$.

 A Brauer character  $\phi\in\Irr_\ell G$ is called {\it liftable} if there exists an ordinary character $\tau$ of $ G$ such that $\tau^\circ=\phi$. An \irr or $FG$-module is called liftable if the Brauer character of it is liftable.

If $H$ is a subgroup of $G$ and $\eta$ is a character or \rep of $H$ then $\tau^G$ denotes the induced character. If $\phi$ is a character or \rep of $G$ then $\phi|_H$ stands for the restriction of $\phi$ to $H$.

If $V$ is an $FG$-module and $X$ a subset of $G$ we write $V^X$ or $C_V(X)$ for the subspaces of elements
of $V$ fixed by all $x\in X$.

For integers $a,b>0$ we write $(a,b)$ for the greatest common divisor of $a,b$ and $a|b$ means that $b$ is a  multiple of $a$. We also write $(\chi,\phi)$
for the inner product of characters $\chi,\phi$ of a group $G$. \editnew{We write $\diag(x_1\ld x_m)$ for a block-diagonal matrix with diagonal blocks $x_1\ld x_m$. }

A finite group of Lie type is that of shape $\mathbf{G}^\Fr$, where $\mathbf{G}$ is a reductive algebraic group and $\Fr$ a Frobenius endomorphism of it. For more information see \cite{C} or \cite{DM}.


\section{Preliminaries}

\begin{lemma}\label{tr1}
 Let $\,C$ be a cyclic group of order coprime to $\ell$, and $\chi$ a non-trivial Brauer character of $C$ such that $\chi(c)=a$ for every
$1\neq c\in C$ (so $\chi$ is constant on $C\smallsetminus\{1\}$).
Then $\chi=\frac{\chi (1)-a}{|C|}\cdot \rho_C^{\reg} +a\cdot 1_C$. In particular, if $ a\geq 0$ or  $a<0$ and $-a\cdot (|C|-1) <\chi(1)$ then $\chi=\rho_C^{\reg}+\chi'$
for some proper character $\chi'$ of C.
\end{lemma}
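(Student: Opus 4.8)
The plan is to reduce everything to ordinary character theory of the cyclic group $C$. Since $|C|$ is coprime to $\ell$, the group algebra $FC$ is semisimple and every element of $C$ is an $\ell'$-element, so $\chi$ is an honest nonnegative integral combination $\chi=\sum_{i}m_i\lambda_i$ of the $|C|$ linear characters $\lambda_0=1_C,\lambda_1\ld\lambda_{|C|-1}$, and the usual orthogonality relations $(\phi,\psi)=\frac{1}{|C|}\sum_{c\in C}\phi(c)\overline{\psi(c)}$ are available for Brauer characters.

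First I would read off the multiplicities directly from the hypothesis that $\chi$ is constant on $C\smallsetminus\{1\}$. For a non-trivial $\lambda_i$ one has $\sum_{c\in C}\lambda_i(c)=0$, hence $\sum_{c\neq 1}\overline{\lambda_i(c)}=-1$, and therefore $m_i=(\chi,\lambda_i)=\frac{1}{|C|}\bigl(\chi(1)+a\sum_{c\neq 1}\overline{\lambda_i(c)}\bigr)=\frac{\chi(1)-a}{|C|}=:k$ for every $i\neq 0$, while $m_0=(\chi,1_C)=\frac{1}{|C|}\bigl(\chi(1)+(|C|-1)a\bigr)=k+a$. Since $\rho_C^{\reg}=\sum_i\lambda_i$, writing $\chi=m_0\cdot 1_C+k\sum_{i\neq 0}\lambda_i$ and substituting gives $\chi=k\,\rho_C^{\reg}+a\cdot 1_C$, which is the asserted formula (one can alternatively verify it instantly by evaluating both sides at $1$ and at any $c\neq 1$, since two Brauer characters agreeing on all $\ell'$-elements coincide).

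For the ``in particular'' statement I would set $\chi'=\chi-\rho_C^{\reg}=(k-1)\rho_C^{\reg}+a\cdot 1_C$ and read off its multiplicities: each non-trivial $\lambda_i$ occurs with multiplicity $k-1$ and $1_C$ with multiplicity $m_0-1=k+a-1$. Thus $\chi'$ is a genuine character exactly when $k\geq 1$ and $m_0\geq 1$. The non-triviality of $\chi$ yields $k\geq 1$: indeed $k=0$ forces all non-trivial constituents to vanish, i.e.\ $\chi=\chi(1)\cdot 1_C$ is a multiple of the trivial character. It then remains to check $m_0\geq 1$. If $a\geq 0$ then $m_0=k+a\geq k\geq 1$. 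If $a<0$, the hypothesis $-a(|C|-1)<\chi(1)$ rearranges to $\chi(1)+(|C|-1)a>0$, i.e.\ $|C|\,m_0>0$, so the integer $m_0$ is $\geq 1$ (and here $k\geq 1$ is automatic, since $a<0$ already precludes $\chi$ from being a multiple of $1_C$). In both cases $\chi'$ has nonnegative multiplicities and hence is a proper character.

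The computation is routine; the only point needing care is the reading of ``non-trivial'', which must mean ``$\chi$ is not a multiple of $1_C$'' (equivalently $k\geq 1$). This is precisely what rules out cases such as $\chi=2\cdot 1_C$ with $|C|>2$, for which the conclusion would fail on degree grounds. I therefore expect no genuine obstacle beyond pinning down this hypothesis and the elementary rearrangement of the displayed inequality.
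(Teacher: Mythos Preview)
Your proof is correct and follows essentially the same approach as the paper: both observe that $\chi-a\cdot 1_C$ vanishes off the identity and hence is a multiple of $\rho_C^{\reg}$, then verify that the relevant multiplicities are positive. You are somewhat more explicit than the paper in separately checking the two conditions $k\geq 1$ and $m_0\geq 1$ and in pinning down the intended reading of ``non-trivial'', but the underlying argument is the same.
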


\begin{proof} The first claim is obvious as $\chi-a\cdot 1_G$ vanishes at $C\smallsetminus \{1\}$. Let $a\geq 0$; then $\frac{\chi (1)-a}{|C|}>0$ as
this equals to the \mult of any non-trivial \ir character of $C$ in $\chi$.  Let $0<-a<|C|$. Then $\frac{\chi (1)-a}{|C|}\cdot \rho_C^{\reg}+a\cdot 1_C=\frac{\chi (1)+a( |C| -1)}{|C|}\cdot \rho_C^{\reg}-a\cdot \rho_C^{\reg} +a\cdot 1_C$, so $\chi (1)+a( |C|-1)>0$ implies the second claim in this case.
\end{proof}

\begin{lemma}\label{zgm} {\rm \cite[Lemma IX.2.7]{HB}}
Let $p,r$ be primes and $a,b$ positive integers such that $p^{a}=r^{b}+1$. Then either

\begin{enumerate}[\rm(i)]
\item $p = 2$, $b = 1$, and $r$ is a Mersenne prime, or

\item $r = 2, a = 1$, and $p$ is a Fermat prime, or

\item $p^{a} = 9$. $\hfill \Box$
\end{enumerate}
\end{lemma}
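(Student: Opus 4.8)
The plan is to give a self-contained elementary proof by parity and factorization, reading the equation as $p^a - r^b = 1$. The first observation is that $p^a$ and $r^b$ differ by $1$ and hence have opposite parities, so exactly one of the primes $p,r$ equals $2$. This splits the argument into two cases, which will produce (i) together with the Mersenne condition on one side, and (ii)--(iii) together with the Fermat condition on the other.

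First I would treat the case $p=2$. Then $r^b = 2^a - 1$ is odd, so $r$ is an odd prime, and the goal is to show $b=1$, which makes $r = 2^a - 1$ a Mersenne prime as in (i). If $b$ were even, then writing $b=2c$ and using that an odd square is $\equiv 1 \pmod 8$ gives $2^a = r^b + 1 \equiv 2 \pmod 8$, forcing $a=1$ and hence $r^b = 1$, a contradiction. If $b$ were odd and $\geq 3$, the factorization
\[
2^a = r^b + 1 = (r+1)\bigl(r^{b-1} - r^{b-2} + \cdots - r + 1\bigr)
\]
exhibits a second factor that is a sum of an odd number of odd terms, hence odd, and visibly larger than $1$; this cannot divide a power of $2$, again a contradiction. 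Therefore $b=1$.

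Next I would treat the case $r=2$, so that $p^a = 2^b + 1$ with $p$ an odd prime. The aim is to show either $a=1$ (giving the Fermat prime $p = 2^b + 1$ of (ii)) or $p^a = 9$ (case (iii)). If $a$ is odd and $\geq 3$, the factorization $p^a - 1 = (p-1)(p^{a-1} + \cdots + p + 1)$ has a second factor equal to a sum of $a$ odd terms, hence odd and greater than $1$, which once more cannot be a power of $2$; so $a$ odd forces $a=1$. If $a$ is even, write $a=2c$ and factor
\[
2^b = p^a - 1 = (p^c - 1)(p^c + 1).
\]
Both factors are even (as $p^c$ is odd), and their greatest common divisor divides $2$, hence equals $2$; since each is a power of $2$, the smaller one satisfies $p^c - 1 = 2$, giving $p^c = 3$, and therefore $p=3$, $c=1$, $a=2$ and $p^a = 9$.

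The argument involves no serious obstacle; the only points requiring care are the parity bookkeeping in the two factorizations (ensuring the relevant cofactor is odd and strictly exceeds $1$) and the short gcd computation that pins down $p^c - 1 = 2$ in the even case. Assembling the three outcomes $b=1$, $a=1$, and $p^a = 9$ yields exactly the trichotomy (i)--(iii). Since this is \cite[Lemma IX.2.7]{HB}, one may of course alternatively simply invoke the reference.
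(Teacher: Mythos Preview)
Your argument is correct. The parity and factorization steps all check out: in the $p=2$ case the alternating sum $r^{b-1}-r^{b-2}+\cdots+1$ has an odd number of odd terms and hence is odd and exceeds $1$; in the $r=2$ case the gcd argument correctly forces $p^c-1=2$.

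The paper itself gives no proof of this lemma at all: it is stated with a citation to \cite[Lemma IX.2.7]{HB} and a terminal $\Box$. So your write-up is strictly more than what the paper does. If the goal is to match the paper, the one-line invocation of the reference (which you already note at the end) suffices; your self-contained argument is a genuine addition, and perfectly sound, but not something the authors chose to include.
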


\bl{hh1}
{\rm \cite[Theorem 1.10]{HB}} Let  
$G=AH$ be a semidirect product, where $A$ is an abelian normal subgroup of $G$ and $H=\lan h\ran$
is a p-group. Let $\phi$ be a faithful $\ell$-\rep of G. Suppose that  $(p\ell,|A|)=1$ and $C_A(h)=Z(G)$. Then $\deg \phi(h)=o(h)$.\el

\begin{corol}\label{hh2}
Let $\SL_n(q)\leq G\leq  \GL_n(q)$ and let $h\in G$ be a non-central $p$-element. Let $\phi$ be an $\ell$-modular
 \rep of G such that ${\rm ker}\,\phi\leq Z(G)$ and $\ell \nmid q$. Suppose that $p \nmid q$ and $h$ is not \ir on the natural $\GL_n(q)$-module $\FF_q^n$.
Then $\deg \phi(h)=o(h)$. \end{corol}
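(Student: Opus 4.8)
The plan is to place $h$ inside an abelian-by-$p$-group and run the Hall--Higman argument underlying Lemma \ref{hh1}. First I would assume $\phi$ irreducible (the general lower bound reduces to a suitable constituent on which $h$ is still non-central); then $\ker\phi\le Z(G)$ together with Schur's Lemma gives that $\phi(Z(G))$ consists of scalars, since $F$ is algebraically closed. Because $p\nmid q$, the element $h$ is semisimple on $V=\FF_q^n$, and as it is not irreducible it stabilizes a proper nonzero $\FF_q$-subspace $W$. Let $U$ be the unipotent radical of the maximal parabolic subgroup of $\GL_n(q)$ stabilizing $W$; then $U\cong\mathrm{Hom}(V/W,W)$ is abelian, is contained in $\SL_n(q)\le G$, is normalized by $h$, and has order a power of the defining characteristic, so $(p\ell,|U|)=1$. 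Put $M=U\langle h\rangle\le G$. As the nontrivial scalars meet $U$ trivially, $\ker\phi\cap U\le Z(G)\cap U=1$, so $\phi|_U$ is faithful.

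Next I would extract the two bounds. Write $Z_1=\langle h\rangle\cap Z(G)$ with generator $z_0$, so that $o(h)=[\langle h\rangle:Z_1]$ is a power of $p$. Since $\phi(h)^{o(h)}=\phi(z_0)$ is scalar, the minimal polynomial of $\phi(h)$ divides $X^{o(h)}-c$, giving $\deg\phi(h)\le o(h)$ at once. For the reverse inequality I would use Clifford theory with respect to $U\lhd M$: decompose $\phi|_U=\bigoplus_\lambda V_\lambda$ into $U$-eigenspaces indexed by the linear characters $\lambda$ that occur, on which $\langle h\rangle$ acts by $\lambda\mapsto\lambda^h$. Because scalars centralize $U$, the subgroup $Z_1$ fixes every $\lambda$, so each $\langle h\rangle$-orbit has length dividing $o(h)$.

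The crux is to produce an occurring $\lambda_0$ whose orbit has full length $o(h)$. Let $x=h^{o(h)/p}$, the generator of the unique subgroup of $\langle h\rangle$ lying just above $Z_1$. If $x$ fixed every occurring $\lambda$, then $[x,U]\le\bigcap_\lambda\ker\lambda=1$ by faithfulness of $\phi|_U$, so $x$ would centralize $U$; but an element centralizes $U\cong\mathrm{Hom}(V/W,W)$ only if it acts as a single scalar on $V$, whereas $x\notin Z_1$ is non-scalar, a contradiction. Hence some occurring $\lambda_0$ is moved by $x$, and since its stabilizer contains $Z_1$ but not $x$ it must equal $Z_1$; thus the orbit of $\lambda_0$ has length $o(h)$. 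Choosing $0\ne v\in V_{\lambda_0}$, the vectors $v,\phi(h)v,\dots,\phi(h)^{o(h)-1}v$ lie in the distinct eigenspaces $V_{\lambda_0},V_{\lambda_0^h},\dots$ and are linearly independent, while $\phi(h)^{o(h)}v=\phi(z_0)v$ is a scalar multiple of $v$; so $\phi(h)$ admits a cyclic subspace of dimension $o(h)$ and $\deg\phi(h)\ge o(h)$, completing the proof. I expect the main obstacle to be exactly the presence of nontrivial scalar powers of $h$: when $o(h)<|h|$ one has $C_U(h)\ne Z(M)$, so Lemma \ref{hh1} does not apply verbatim, and the regular-orbit argument modulo $Z_1$ above is the device that substitutes for it; verifying that $x$ is non-scalar on $U$ (equivalently, that the full-length orbit exists) is the heart of the matter.
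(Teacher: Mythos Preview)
Your approach is essentially the paper's: take $A$ (your $U$) to be the unipotent radical of the parabolic stabilizing $W$, form $A\langle h\rangle$, and run the Hall--Higman orbit argument; the paper simply cites Lemma~\ref{hh1} as a black box, whereas you unpack the Clifford-theoretic argument by hand, which has the virtue of handling the case $o(h)<|h|$ explicitly. One small correction: it is not literally true that every element centralizing $U$ is scalar on $V$ (indeed $U$ centralizes itself); the correct statement is $C_{\GL_n(q)}(U)=U\cdot Z(\GL_n(q))$, and since your element $x$ is a $p$-element with $p\nmid |U|$ and $U\cap Z=1$, membership in $UZ\cong U\times Z$ forces $x\in Z$, which is the contradiction you need.
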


 \bp Let $V=\FF_q^n$ and let $W\neq 0$ be a proper $h$-stable subspace of $V$. Let
 $$A:=\{g\in G \mid gW=W\mbox{ and }g\mbox{ acts trivially on both }W\mbox{ and }V/W\}.$$
 Then $A$ is an abelian group,  $(|A|,p\ell)=1$ and $hAh\up=A$. In addition, $C_A(h)\leq Z(G)$ (in fact $C_{\GL_n(q)}(A)=AZ(G)$). So the result follows from Lemma \ref{hh1}.\enp

\begin{lemma}\label{p4a}
Let $G=\Sp_{2n}(q)$, $q$  even, $n>1$, and let $g\in G$ be a reducible $p$-element for $p\not|q$. Let $\phi\in  \Irr_\ell G$   with $\dim\phi>1$ and $l \neq 2$.  Then $\deg\phi(g)=|g|$.\end{lemma}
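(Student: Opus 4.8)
The plan is to apply the Hall--Higman reduction of Lemma \ref{hh1}, so the whole proof reduces to producing a suitable abelian subgroup on which $g$ acts fixed-point-freely. First I would record the standing reductions. Since $q$ is even, $p\nmid q$ forces $p$ to be odd, and the only scalars in $\Sp_{2n}(q)$ are $\pm 1=1$, so $Z(G)=1$ and hence $o(g)=|g|$; as $G$ is simple (barring small cases such as $\Sp_4(2)\cong S_6$, treated by hand) and $\dim\phi>1$, the kernel of $\phi$ is a proper normal subgroup, so $\phi$ is faithful. Thus it suffices to exhibit an abelian subgroup $A\leq G$ of order prime to $p\ell$ (equivalently a $2$-group, since $p,\ell\neq 2$) that is normalized by $g$ and satisfies $C_A(g)=1$: applying Lemma \ref{hh1} to $A\langle g\rangle$ then yields $\deg\phi(g)=o(g)=|g|$.

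Being a $p'$-element, $g$ is semisimple, so $V=\FF_q^{2n}$ is a direct sum of irreducible $\langle g\rangle$-modules, each either nondegenerate (hence self-dual) or totally isotropic (and then paired with its form-dual). Suppose first that $g$ admits a nonzero totally isotropic invariant subspace $W$ with $g|_W\neq\mathrm{id}$. Let $U=Z(U_P)$ be the centre of the unipotent radical of the stabilizer of $W$; it is an elementary abelian $2$-group, normalized by $g$, generated by the symplectic transvections $T_v\colon x\mapsto x+\langle x,v\rangle v$ for $v\in W$, and $g$ acts on it by $T_v\mapsto T_{gv}$, i.e. through $\mathrm{Sym}^2(g|_W)$. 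Since in characteristic $2$ an eigenvalue $\lambda$ of $g|_W$ with $\lambda^2=1$ must equal $1$, the semisimple operator $g|_W\neq\mathrm{id}$ acts nontrivially on $U$, so $[U,g]\neq 1$. By coprime action $U=C_U(g)\times[U,g]$, and $A:=[U,g]$ is a nontrivial $g$-invariant abelian $2$-group with $C_A(g)=1$; Lemma \ref{hh1} finishes this case.

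It remains to treat the situation where $g$ has \emph{no} nonzero invariant totally isotropic subspace on which it acts nontrivially. Then every nontrivial irreducible constituent is nondegenerate, and each occurs with multiplicity one: two isomorphic nondegenerate copies $V_1\cong V_1'$ would produce a diagonal $\{(v,\theta v)\}$ that is totally isotropic (in characteristic $2$) and $g$-invariant with nontrivial action, contradicting our assumption. Writing $V=C_V(g)\perp V_1\perp\cdots\perp V_r$ with distinct nondegenerate $V_i$, I would peel off a proper nondegenerate invariant subspace and induct on $n$, reducing to the factor $g_j=g|_{V_j}$ of maximal order $|g_j|=|g|$ in $\Sp(V_j)$; this is handled by induction when $g_j$ is reducible there, and by the irreducible-element case otherwise. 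To transport the conclusion to $\phi$, restrict to $\Sp(V_j)\times\Sp(V_j^\perp)$: a constituent $\psi_1\boxtimes\psi_2$ with $\psi_1\neq 1$ exists since $\phi$ is faithful, and as $\psi_2(g')$ (with $g'=g|_{V_j^\perp}$) has eigenvalues that are $|g|$-th roots of unity while $\psi_1(g_j)$ has \emph{all} $|g|$-th roots of unity as eigenvalues, the tensor $\psi_1(g_j)\otimes\psi_2(g')$ has minimal polynomial $x^{|g|}-1$; hence $\deg\phi(g)=|g|$.

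The \emph{main obstacle} is exactly this irreducible base case: when $g$ acts irreducibly on a nondegenerate constituent it lies in a maximally non-split torus normalizing no proper rational unipotent subgroup, so the construction of the second paragraph is unavailable and no elementary root-subgroup argument reaches $|g|$. This is the genuinely hard input, and I would quote it from the classical-groups analysis of \cite{TZ8} (the cross-characteristic minimal-polynomial theorem for irreducible semisimple elements of symplectic groups). A secondary technical point is the precise identification of $Z(U_P)$ and of the $g$-action in characteristic $2$ (symmetric bilinear versus quadratic refinements), together with the handful of small groups --- notably $\Sp_4(2)\cong S_6$ and $\Sp_2(2)$-type factors --- where simplicity, faithfulness, or the existence of a suitable constituent $\psi_1$ must be checked directly.
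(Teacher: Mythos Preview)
Your parabolic reduction (Case 1) is fine in spirit and matches what the paper does by citing \cite{DZ1}. The genuine gap is in your Case 2 transport step, where two related assumptions fail.

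First, you assert that for the irreducible factor $g_j\in\Sp(V_j)$ and any nontrivial $\psi_1\in\Irr_\ell\Sp(V_j)$ one has $\deg\psi_1(g_j)=|g_j|$, quoting \cite{TZ8}. That is not what \cite{TZ8} says: \cite[Lemma 3.3 and Prop.~5.7]{TZ8} give only $\deg\psi_1(g_j)\geq|g_j|-2$ in general, with the deficit actually occurring when $|g_j|=q^k+1$. So ``$\psi_1(g_j)$ has all $|g|$-th roots of unity as eigenvalues'' can fail, and your tensor argument collapses. (The eigenvalue phrasing also silently assumes $\ell\neq p$; when $\ell=p$ the matrices are unipotent and the statement must be recast in terms of Jordan block sizes.) Second, to repair a deficit of one or two on the $\psi_1$ side you need the other factor to contribute: one needs $\deg\psi_2(g')\geq 3$, hence $\dim\psi_2>1$. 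Faithfulness of $\phi$ gives a constituent with $\psi_1\neq 1$, but does not exclude $\psi_2=1$. The paper's proof addresses exactly these two points: it invokes \cite[Corollary 3.8]{TZ8} to secure a constituent $\tau_1\otimes\tau_2$ with \emph{both} $\dim\tau_i>1$; it then uses Lemma \ref{zgm} to pin down the exceptional possibilities $|g_1|=p=q^k+1$ or $q^k=8$, $|g_1|=9$; and it combines $\deg\tau_1(g_1)\geq|g|-2$ with $\deg\tau_2(g_2)\geq 3$ via the explicit tensor lemmas \cite[Lemma 2.12]{TZ8} (and Jordan-block computations from \cite{Su9} when $\ell=3$). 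A residual case $q=2$, $k=3$, $|g|=9$ requires a further ad hoc argument. None of this is supplied by your sketch.
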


\bp If $g$ belongs to a parabolic subgroup of $G$ then the result is contained in \cite{DZ1}. Otherwise, $|g|>3$ and  $g\in H\cong H_1\times H_2$, where $H_1\cong \Sp_{2k}(q)$, $H_2\cong \Sp_{2l}(q)$, $k+l=n$.
Then $g=g_1 g_2$, where $g_1\in H_1$, $g_2\in H_2$ are $p$-elements. We may assume that $|g|=|g_1|$
and moreover that $g_1$ is \ir in $H_1$. In addition, we may assume that $(k,q)\neq (1,2)$ as otherwise
$|g|=3$.

By \cite[Corollary 3.8]{TZ8},  the restriction $\phi|_H$ contains an \ir constituent $\tau=\tau_1\otimes \tau_2$ ($\tau_1\in \Irr_\ell H_1$, $ \tau_2\in \Irr_\ell H_2$) such that
 both $\dim\tau_1,  \dim\tau_2>1$. Then $\tau(g)=\tau_1(g_1)\otimes \tau_2(g_2)$.
If $\deg\tau_1(g_1)=|g_1|$ then we are done. Otherwise, by \cite[Lemma 3.3 and Prop. 5.7]{TZ8}, $|g_1|=q^k+1$ and $\deg\tau_1(g_1)\geq |g_1|-2$ (in fact, $\deg\tau_1(g_1)\geq |g_1|-1$ if $k>1$).
By Lemma \ref{zgm}, $|g_1|=q^k+1$ implies either  $|g_1|=p=q^k+1$ or $q^k=8$, $|g_1|=9$.

 Suppose first that $|g_1|=p>3$. Then   $\deg\tau_2(g_2)\geq 3$ by \cite[Theorem 1.2]{TZ8}, and hence
 $\deg\tau(g)=p$  by \cite[Lemma 2.12]{TZ8}.

 Suppose that $|g|=9,q=8$. Then $\deg\tau_2(g_2)\geq 3$ and $\deg\tau_1(g_1)\geq 7$.    If $\ell>3$ then \cite[Lemma 2.12(i)]{TZ8} again yields the result.  Let $\ell=3$. Let $J_i$ denote the Jordan block of size $i$ over $\FF_3$. Then the minimum \po degree of $J_7\otimes J_3$ equals 9 \cite[Lemma 2.11]{Su9});
 in addition, if $a\geq 7,b\geq 3$ then the minimum \po degree of $J_a\otimes J_b$ is at least 9
 \cite[Lemma 2.10]{Su9}. Therefore, $\deg\tau(g)=9$.

 Let $q=2,k=3$ and $|g|=9$. Then $\deg\tau_1(g_1)\geq 7$ by  \cite[Prop. 5.7(ii)]{TZ8}. If
 $\deg\tau_2(g_2)\geq 3$, then the result follows as above. Suppose that $\deg\tau_2(g_2)=2$.
 Then $|g_2|=3$. If $H_2\neq \Sp_2(2)$ then $g$ is contained in a parabolic subgroup of $G$.
 So $l=2$, $n=4$. In this case we show that $\phi|_{H_1}$ has an \ir constituent of degree
 greater than 7. Indeed, $H_1$ is contains in a parabolic subgroup $P$, the stabilizer of a line
 at the natural $\FF_2G$-module. Let $Q=O_2(P)$. Then $Q$ is an abelian group.
 Then $\phi|_Q$ is a direct sum of linear \reps $\lam$ of $Q$ permuted by $P$ when $P$ acts on $Q$ by conjugation. Then there is $\lam$ whose $\lan g_1\ran$-orbit is faithful.
 Let $\Lambda=H_1\lam$ be the $H_1$-orbit of $\lam$ with point stabilizer $C_{H_1}(\lam)\cong P_1$, where
 $P_1$ is the stabilizer of a vector at the natural $\FF_2H_1$-module.
 This yields a permutational $FH_1$-module $L\cong 1_{P_1}^{H_1}$. 
The composition factors of $L$ are the reduction modulo 3 of those for the similar module over
the complex numbers. The latter decomposes as $1_{H_1}+\chi_1+\chi_2$, where $\chi_1(1)=27$ and
   $\chi_2(1)=35$ \cite[p. 46]{Atlas}. As $\chi^\circ _1$, the restriction of $\chi$ to $3'$-elements, coincides with
   the Brauer character of $H_1$ of degree 27 \cite{JLPW}, we conclude that $L$ has an \ir constituent of degree 27. This contradicts \cite[Prop. 5.7(ii)]{TZ8}.\enp

\begin{lemma}\label{ss3}
Let   $G=\SL_3(q)$, $q>2$, and let $\phi\in\Irr_F(G)$,   $\dim\phi>1$, $\ell \nmid q$. Let $g\in G$
be a $p$-element. Then either $\deg\phi(g)=o(g)$, or $(3,q-1)=1$, $|g|=q^2+q+1$; moreover,  $\dim\phi=q^2+q-1$ if $p=\ell$, whereas  if  $p\neq \ell$  then $\deg\phi(g)=\dim\phi=q^2+q$ and  $1$ is not an eigenvalue  of  $\phi(g)$.\end{lemma}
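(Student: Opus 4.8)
The plan is to split the argument according to whether $g$ acts irreducibly on the natural module $V=\FF_q^3$. First I would dispose of the reducible case. If $g$ is central then $o(g)=1$ and $\phi(g)$ is scalar, so $\deg\phi(g)=1=o(g)$. If $g$ is non-central but stabilizes a proper non-zero subspace of $V$, then, since $\SL_3(q)$ is quasi-simple for $q>2$ and $\phi$ is non-trivial (so $\ker\phi\le Z(G)$), Corollary \ref{hh2} applies verbatim and gives $\deg\phi(g)=o(g)$. Thus from now on $g$ may be assumed irreducible on $V$, i.e. a regular semisimple element (with $p\nmid q$) having no eigenvector over $\FF_q$; such a $g$ lies in a Singer (Coxeter) torus $C$, cyclic of order $N=q^2+q+1$, and since $N\equiv 3\pmod{q-1}$ we have $\gcd(N,q-1)=\gcd(3,q-1)$.

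Next I would fix the arithmetic of $\langle g\rangle$ inside $C$. When $(3,q-1)=1$ the torus $C$ meets $Z(G)$ trivially; when $(3,q-1)=3$ one has $3\mid N$, $9\nmid N$, and the order-$3$ subgroup of $C$ is exactly $Z(G)$. In the latter case a $p$-element with $p=3$ lies in $Z(G)$ and is already treated, so I may assume $\langle g\rangle\cap Z(G)=1$; then $o(g)=|g|=p^a\mid N$ and, crucially, every non-trivial power $g^k$ is again irreducible (its eigenvalues $\lambda^k,\lambda^{kq},\lambda^{kq^2}$ are distinct and lie outside $\FF_q$). I would also record that $|g|=N$ forces $(3,q-1)=1$: if $(3,q-1)=3$ then $3\mid N=p^a$, hence $p=3$ and $N=3^a$, impossible since $9\nmid N$ while $N=q^2+q+1\ge 7$. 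This reproduces the dichotomy of the statement: either $|g|$ is a proper prime-power divisor of $N$, whence $|g|\le N/2$, or $|g|=N$ and $(3,q-1)=1$.

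The core is then to count the eigenvalues of $\phi(g)$, and here I treat $p\neq\ell$ first. Since $g$ is an $\ell'$-element, $\phi(g)$ is diagonalizable and $\deg\phi(g)$ equals the number of characters of $\langle g\rangle$ occurring with positive multiplicity $m_j=\langle\phi|_{\langle g\rangle},\omega^j\rangle$ ($\omega$ a primitive $|g|$-th root of unity). From $m_j=\frac1{|g|}\bigl(\phi(1)+\sum_{k\neq 0}\phi(g^k)\bar\omega^{jk}\bigr)$ it suffices to show $\phi(1)>\sum_{k\neq 0}|\phi(g^k)|$. As each $g^k$ is regular in the Coxeter torus, $\phi(g^k)=0$ unless $\phi$ lies in the corresponding Lusztig series, where the values stay bounded by a constant (at most $3$ for ordinary $\phi$, controlled via decomposition for Brauer $\phi$); so the inequality holds for every $\phi$ of sufficiently large degree and all $m_j>0$, giving $\deg\phi(g)=|g|=o(g)$. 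The finitely many small-degree $\phi$ in the series are handled by the exact value $\chi_{(2,1)}(g^k)=-1$ — obtained since $\chi_{(2,1)}=1_P^G-1_G$ for the permutation character on the one-dimensional subspaces ($P$ a point stabilizer) and an irreducible element fixes no such subspace — so Lemma \ref{tr1} with $a=-1$ yields $m_j=(\phi(1)+1)/|g|>0$ for $j\ne 0$ and $m_0=(\phi(1)-|g|+1)/|g|$. When $|g|\le N/2$ this is positive because the minimal non-trivial degree is $\ge q^2+q-1>N/2$; the sole failure $m_0=0$ occurs when $|g|=N$ and $\phi(1)=q^2+q$. In that case $\ell\nmid N$ (as $N=p^a$ with $p\neq\ell$), so $\chi_{(2,1)}$ stays irreducible mod $\ell$ of degree $q^2+q$, each non-trivial $N$-th root of unity is an eigenvalue exactly once, $1$ is not, and $\deg\phi(g)=\dim\phi=q^2+q$ — precisely the stated exception.

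Finally the case $p=\ell$ needs a different mechanism: now $g$ is an $\ell$-element, $\phi(g)$ is unipotent, and $\deg\phi(g)$ is the size of its largest Jordan block, which is at most $|g|=\ell^a$ and equals $|g|$ exactly when $\phi|_{\langle g\rangle}$ has a free $\FF_\ell\langle g\rangle$-summand. I would obtain this from $\phi|_C$, since a free $\FF_\ell C$-summand restricts to the block $J_{\ell^a}$. The exceptional module is read off from the permutation module on the $N$ points, on which a Singer generator is a single $N$-cycle, i.e. the regular module $J_N$; since $\ell\mid N$ here, its head and socle are trivial and the heart, of dimension $q^2+q-1$, restricts to the single block $J_{q^2+q-1}$, giving $\dim\phi=q^2+q-1$ and $\deg\phi(g)=q^2+q-1<|g|$. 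The hard part — and the crux of the whole lemma — is exactly this control of the small representations: proving that $\chi_{(2,1)}$ (resp. its characteristic-$\ell$ heart) is the \emph{only} irreducible whose degree falls below the threshold $q^2+q+1$, excluding any other Coxeter-series constituent, and carrying the Jordan-block bookkeeping for $p=\ell$ through the modular reduction of the permutation module so that every larger unipotent module does contribute a full block $J_{\ell^a}$.
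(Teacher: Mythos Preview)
Your reduction of the reducible case via Corollary~\ref{hh2} and your arithmetic showing that $|g|=q^2+q+1$ forces $(3,q-1)=1$ are fine and match the paper. The approaches then diverge: the paper observes that once $g$ is irreducible the Sylow $p$-subgroups of $G$ are cyclic, and immediately invokes \cite[Theorem~1.1]{Z2} for $\ell\in\{0,p\}$ and \cite[Example~3.2(ii)]{TZ8} for $\ell\neq p$ (the latter asserting that $\phi$ is either liftable or equals $\tau^\circ-1_G$ with $\tau$ the unipotent character of degree $q^2+q$). You instead attempt a direct eigenvalue/Jordan-block count via Lemma~\ref{tr1} and the permutation module on points.

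The direct approach is not wrong in spirit, but as written it has a genuine gap --- one you yourself flag in the final paragraph. For $p\neq\ell$ you carry out the Lemma~\ref{tr1} computation only for the unipotent character $\chi_{(2,1)}$; for an arbitrary $\ell$-Brauer character $\phi$ you need to know that $\phi$ is constant (and small) on $\langle g\rangle\setminus\{1\}$. Your appeal to Lusztig series gives this for \emph{ordinary} characters, but passing to Brauer characters requires exactly the liftability statement of \cite{TZ8} (or equivalently knowledge of the decomposition matrix for $\ell\mid q^2+q+1$), which you have not supplied. Likewise for $p=\ell$: analysing the heart of the permutation module pins down the exceptional $\phi$ of dimension $q^2+q-1$, but you still owe an argument that \emph{every other} irreducible $F G$-module affords a full Jordan block $J_{|g|}$ on restriction to $\langle g\rangle$. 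That is precisely the content of \cite[Theorem~1.1]{Z2} for groups with cyclic Sylow $p$-subgroup, and it does not follow from the permutation-module calculation alone. In short, the ``hard part'' you isolate at the end is the whole lemma; the paper's proof is short because it outsources exactly those two facts to \cite{Z2} and \cite{TZ8}.
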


\bp \edit{If $g$ is reducible in $G$ then the result follows from Corollary \ref{hh2}. Suppose that $g$ is irreducible in $G$, and hence $|g|$ divides $q^2+q+1$.
Observe that $p>2$ as $q^2+q+1$ is odd. If Sylow $p$-subgroups are not cyclic then
 $p=3$ and $3|(q-1)$, and then $g$ is reducible by \cite[Lemma 3.2]{Z4}  a contradiction.}

So Sylow $p$-subgroups are  cyclic.
If $\ell=0$ or $p$ then the result is a special case of \cite[Theorem 1.1]{Z2}, and the claim on eigenvalue 1
for $\ell=0$ is contained in  \cite[Corollary 1.3(4)]{Z2}. Let $\ell\neq p$.
According to \cite[Example 3.2(ii)]{TZ8}, either $\phi$ lifts to characteristic 0 and the result follows
from that for $\ell=0$, or $\ell$ divides $q^2+q+1$ and
 the Brauer character of $\phi$ coincides on the $\ell'$-elements  with $\tau -1_G$, where $\tau$
is the unipotent character of degree $q^2+q$ of  $G$. This again implies the result.  \enp

\begin{lemma}\label{bt4}
{\rm (Borel and Tits, see \cite[\S 13.1]{GL})}  Let $H $ be a finite reductive  group in characteristic $r$ and $g\in G$. If $g$ normalizes   an $r$-subgroup of $H$ then $g$ belongs to a parabolic
    subgroup of $H$. In particular, this holds if $g$ is not regular. \end{lemma}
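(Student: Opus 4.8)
This is the Borel--Tits theorem, so rather than argue from scratch I would deduce the finite statement from its algebraic-group counterpart by Frobenius descent. Write $H=\mathbf{G}^\Fr$ with $\mathbf{G}$ connected reductive over $\overline{\FF}_r$ and $\Fr$ a Frobenius endomorphism; recall that an $r$-element of $H$ is precisely a unipotent element of $\mathbf{G}$ lying in $H$, so the nontrivial $r$-subgroup $R$ normalised by $g$ is a nontrivial $\Fr$-stable closed unipotent subgroup of $\mathbf{G}$.

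The plan has three steps. First I would invoke the algebraic form of Borel--Tits: for any nontrivial closed unipotent subgroup $U\leq\mathbf{G}$ there is a parabolic subgroup $\mathbf{P}=\mathbf{P}(U)$ with $U\leq R_u(\mathbf{P})$ and $N_{\mathbf{G}}(U)\leq\mathbf{P}$. The essential point, which I would have to establish with care, is that $\mathbf{P}(U)$ can be produced by an \emph{intrinsic} recipe from $U$ and is therefore functorial: for instance one attaches to $U$ the Kempf--Rousseau optimal cocharacter destabilising $U$, or one runs the original Borel--Tits induction building a strictly increasing chain of unipotent radicals. Either construction is canonical, so any automorphism of $\mathbf{G}$ stabilising $U$ also stabilises $\mathbf{P}(U)$.

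Second, I apply this to $U=R$. Since $R\leq H$ is $\Fr$-stable and $\mathbf{P}(R)$ depends only on $R$, the parabolic $\mathbf{P}(R)$ is $\Fr$-stable, whence $P:=\mathbf{P}(R)^\Fr$ is a parabolic subgroup of $H$. As $g$ normalises $R$ we have $g\in N_{\mathbf{G}}(R)\leq\mathbf{P}(R)$, and $g\in H$, so $g\in\mathbf{P}(R)\cap H=P$, giving the first assertion.

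Third, for the ``in particular'' clause: if $g$ is not regular then $\dim C_{\mathbf{G}}(g)^\circ>\mathrm{rank}\,\mathbf{G}$, so $C_{\mathbf{G}}(g)^\circ$ has dimension exceeding its own rank and cannot be a torus; hence it contains a nontrivial unipotent element. The centraliser $C_{\mathbf{G}}(g)$ is $\Fr$-stable, so a Lang--Steinberg argument (applied to a nontrivial $\Fr$-stable unipotent subgroup it contains) yields a nontrivial unipotent element $u\in C_H(g)$. Then $g$ centralises, hence normalises, the nontrivial $r$-subgroup $\langle u\rangle$, and the first part applies. The main obstacle is Step~1: everything rests on the canonicity of $\mathbf{P}(U)$, and verifying that the chosen construction really is automorphism-equivariant, so that $\Fr$-stability of $R$ transfers to $\mathbf{P}(R)$, is where the real work lies.
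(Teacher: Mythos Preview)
The paper does not give a proof of this lemma; it is simply quoted as the Borel--Tits theorem with a reference to \cite[\S 13.1]{GL}. Your outline is the standard derivation of the finite statement from the algebraic Borel--Tits theorem via the canonicity of the parabolic $\mathbf{P}(U)$, and it is correct. One small point worth tightening in Step~3: for a non-semisimple $g$ you can bypass the dimension argument entirely, since the unipotent part $g_u\neq 1$ already gives a nontrivial $r$-subgroup $\langle g_u\rangle$ centralised by $g$; your centraliser-dimension argument then handles the semisimple case cleanly (where indeed $C_{\mathbf{G}}(g)^\circ$ is reductive of full rank and strictly larger dimension, hence not a torus, and an $\Fr$-stable Borel of it supplies the required $\Fr$-fixed unipotent).
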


Let $G$ be a finite quasi-simple group of Lie type in characteristic $r>0$.
 Let $\Phi_m(x)$ denote the cyclotomic \po for $m$-th roots of $1$,
and $\prod_m\Phi_m^{l_m}(x)$  a \po associated with $G$, see   \cite{GL}, pages $110--111$. Set $|G|_{r'}: =|G|/|U|$ where $U$ is a Sylow $r$-subgroup of $G$. Then $|G|_{r'}=\prod_m\Phi_m^{l_m}(q)$. If
$G={}^2B_2(q), {}^2F_4(q)$ we assume that $q=2^{2a+1}$, and if $G={}^2G_2(q)$ then $q=3^{2a+1}$ which notation agrees with that in \cite{GL}. Throughout this section $m_p$ denotes the  multiplicative order of $q (\bmod\ p)$, and  $e_p$ is the $p$-part of $\Phi_{m_p}(q)$. Observe that $ \Phi_1(q)=q-1,$ $ \Phi_2(q)=q+1,$
$\Phi_3(q)=q^2+q+1,$  $\Phi_4(q)=q^2+1,$ $\Phi_5(q)= q^4+q^3+q^2+q+1,$ $  \Phi_6(q)=q^2-q+1,$
$\Phi_8(q)=q^4+1,$ $\Phi_{10}(q)= q^4-q^3+q^2-q+1,$ $\Phi_{12}(q)=q^4-q^2+1.$

\begin{lemma}\label{sylow}
{\rm (\cite[\S 4.10.2]{GLS} and \cite{BM2})}
\begin{enumerate}[\rm(i)]
\item $|G|_{r'}=\prod_m\Phi_m^{l_m}(q)$;

\item Then $S$ is  cyclic if and only if there is exactly one $m$
such that $p$ divides  $\Phi_m(q)$ and $l_m=1$ for this $m.$

\item For every factor $\Phi_m (x)$ of the above \po there is a
torus $T$ of $G$ such that $|T|=\Phi_m^{l_m}(q)$. All tori of
order $\Phi_m^{l_m}(q)$ are conjugate in $G$. In addition, $T$ is
a  direct product of subtori of order $\Phi_m (q)$.

\item Let $m_p$ be the  multiplicative order of $q\pmod p$ and
let $T_p$ be a torus in $(iii)$ corresponding to $m=m_p$. Then
$N_G(T)$ contains a conjugate of $S$. Furthermore, if $S\subset
N_G(T)$ then the subgroup $A:=T\cap S$ is homocyclic of rank
$l_{m_p}$ and of exponent $e_p$. 
 $\hfill \Box$
\end{enumerate}
\end{lemma}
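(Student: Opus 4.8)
The plan is to assemble the statement from the generic structure theory of finite reductive groups: the parametrization of maximal tori by twisted conjugacy classes of the Weyl group, together with the $\Phi_d$-Sylow theory of Broué--Malle--Michel. Write $G=\mathbf{G}^\Fr$ with $\mathbf{G}$ connected reductive of semisimple rank $n$, fix a maximally split torus $\mathbf{T}$ with cocharacter lattice $Y$, and let $\Fr$ act on $Y\otimes\QQ$ as $q\phi$ with $\phi$ of finite order. Part (i) is then immediate from the order polynomial $|G|=q^N\prod_m\Phi_m(q)^{l_m}$, where $N$ is the number of positive roots and the exponents $l_m$ are read off from the characteristic polynomial of $\phi$ on $Y\otimes\QQ$: the unipotent radical of a Borel subgroup is a Sylow $r$-subgroup $U$ of order $q^N$, so $|G|_{r'}=|G|/|U|=\prod_m\Phi_m(q)^{l_m}$.

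For (iii) I would use the bijection between $G$-classes of maximal tori and $\Fr$-conjugacy classes of $W$: the torus $T_w$ attached to $w\in W$ has order equal to the value at $q$ of $\det(x\cdot w\phi-\mathrm{id})$ on $Y\otimes\QQ$. To realize a torus of order $\Phi_m^{l_m}(q)$ one selects $w$ so that $w\phi$ acts with characteristic polynomial $\Phi_m(x)^{l_m}$ on an $l_m$-dimensional summand, i.e. a $\zeta$-regular element in Springer's sense with $\zeta$ a primitive $m$-th root of unity. Such $w$ exists because $l_m=\dim_{\CC}V(\zeta)$, the $\zeta$-eigenspace of the $\phi$-twisted $W$-action, and conjugacy of all such tori is the uniqueness clause of Springer's theory of regular elements. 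The decomposition into subtori of order $\Phi_m(q)$ is then the splitting of $V(\zeta)$ into $l_m$ lines, each giving a rank-one $\Phi_m$-torus.

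For (ii) and (iv) I would invoke $\Phi_d$-Sylow theory, preceded by the elementary number theory: for $p\nmid q$ with $d=m_p=\mathrm{ord}_p(q)$ one has $p\mid\Phi_m(q)$ only for $m\in\{d,dp,dp^2,\dots\}$, and for $m>d$ the prime $p$ divides $\Phi_m(q)$ exactly once. Since $\prod_m\Phi_m^{l_m}(q)$ is the \emph{full} $r'$-part, we get $|G|_p=\prod_m(\Phi_m(q))_p^{\,l_m}$; hence $|G|_p=e_p$ precisely when $d$ is the unique $m$ with $l_m>0$ and $p\mid\Phi_m(q)$ and $l_d=1$, which is the stated condition. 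Granting the generic Sylow theorem that a Sylow $\Phi_d$-torus $T_p$ satisfies $|N_G(T_p)|_p=|G|_p$ and $S\le N_G(T_p)$, the intersection $A=T_p\cap S$ is a Sylow $p$-subgroup of $T_p\trianglelefteq N_G(T_p)$, hence the full $p$-part of $T_p$. As $T_p$ is a direct product of $l_{m_p}$ cyclic subtori of order $\Phi_{m_p}(q)$, each with $p$-part $e_p$, this is homocyclic of rank $l_{m_p}$ and exponent $e_p$, proving (iv); and under the condition of (ii) we then have $|A|=e_p=|G|_p$, forcing $S=A$ cyclic, while in the contrary cases either $l_{m_p}\ge 2$ makes $A$ non-cyclic or a second divisible factor forces $|G|_p>|A|$, so $S$ is non-cyclic.

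The main obstacle is the generic Sylow step underlying (iv): showing $N_G(T_p)$ captures an entire Sylow $p$-subgroup, equivalently that no $p$-torsion of $G$ hides outside the $\Phi_d$-toral part. In the full Broué--Malle--Michel generality this is a genuine theorem requiring care at small and bad primes, and establishing $|N_G(T_p)|_p=|G|_p$ in a uniform way is the delicate point. For the groups of BN-pair rank at most $2$ treated here, however, it can be verified directly from the tabulated orders of the maximal tori and their normalizers in \cite{GL}; this case-by-case route is what I would adopt to keep the argument self-contained.
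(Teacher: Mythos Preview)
The paper does not supply a proof of this lemma: it is recorded as a known structural fact with attributions to \cite[\S 4.10.2]{GLS} and \cite{BM2} and closed with a box. So there is no argument in the paper to compare against; you have written a proof where the authors simply cite one.

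Your outline is the standard derivation from the Brou\'e--Malle $\Phi_d$-theory together with Springer's regular elements, and it is broadly correct. Two small points deserve tightening. First, in (iii) the passage from the $\zeta$-eigenspace $V(\zeta)$ to a direct product of rank-one $\Phi_m$-subtori is not literally a ``splitting into lines'': what is needed is that the $w\phi$-module structure on the \emph{integral} cocharacter lattice (not merely on $Y\otimes\CC$) decomposes as a direct sum of $l_m$ copies of the rank-$\varphi(m)$ lattice on which $w\phi$ has minimal polynomial $\Phi_m$, and it is this integral decomposition that produces the subtori of order $\Phi_m(q)$. Second, in the converse direction of (ii) the step ``a second divisible factor forces $|G|_p>|A|$, so $S$ is non-cyclic'' tacitly uses that the exponent of $S$ equals $e_p$; this holds because every $p$-element ($p\neq r$) is semisimple and hence lies in some maximal torus, whose $p$-exponent is at most $e_p$, but you should make that explicit. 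Your closing paragraph correctly isolates the generic Sylow assertion $|N_G(T_p)|_p=|G|_p$ as the substantive ingredient, and your proposal to verify it case-by-case for the rank~$\le 2$ groups from the torus tables in \cite{GL} is exactly how the paper treats the lemma in practice.
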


 \def\au{automorphism }

\begin{lemma}\label{cf6}
Let ${\mathbf G}$ be a simple simply connected algebraic
group of rank $n>0$, $\Fr$ a Frobenius endomorphism of ${\mathbf G}$, and $G:={\mathbf G}^{\Fr}$. Let A be as in Lemma {\rm \ref{sylow}(iv)}.

\begin{enumerate}[\rm(i)]
\item {\rm \cite[Proposition 4.8]{Z4}} Let $p>2$ be a prime dividing $|G|$ and $e_{p}=:|\Phi_{m_p}(q)|_p$, that is, $e_{p}$ is the exponent of $A$. Then every $p$-element $g\in G$ of order at most $e_{p}$ is conjugate to an element in $A$.

\item Let $\ep\in \{\pm 1\}$ be such that $4|(q-\ep)$, and let $q-\ep=2^em$, where
 $m$ is odd.  Suppose that $G$ has a maximal torus $T$ of order $(q-\ep)^n$. Then every $2$-element of $G$ of order at most $2^e$ is conjugate to an element of $T$.
\end{enumerate}
\end{lemma}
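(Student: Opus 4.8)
The plan is to transfer the problem to the ambient algebraic torus $\mathbf{T}$ with $\mathbf{T}^\Fr=T$, and then descend from conjugacy in $\mathbf{G}$ to conjugacy in $G$; this runs parallel to part (i), with the prime $2$ and the congruence of $q$ modulo $4$ playing the role of $m_p$ and $e_p$. First I would observe that $4\mid(q-\ep)$ forces $q$ to be odd, so every $2$-element $g\in G$ is semisimple in $\mathbf{G}$; write $|g|=2^k$ with $k\le e$. Since all maximal tori of $\mathbf{G}$ are $\mathbf{G}$-conjugate and $g$ lies in one of them, there is $x\in\mathbf{G}$ with $t:=xgx\up\in\mathbf{T}$, and $t$ again has order $2^k\le 2^e$.

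The key point will be that this $t$ automatically lies in $T=\mathbf{T}^\Fr$. As an abstract group $\mathbf{T}\cong(\overline{\FF}_q^{\times})^n$, so its subgroup $\mathbf{T}[2^e]$ of elements of order dividing $2^e$ has order $2^{en}$ (using $2\neq r$). On the other hand, by Lemma \ref{sylow}(iii) the torus $T$ of order $(q-\ep)^n$ is a direct product of $n$ cyclic subtori of order $q-\ep$, so $T\cong(\ZZ/(q-\ep))^n$ and its Sylow $2$-subgroup $A_2$ is homocyclic of rank $n$ and exponent $2^e$; in particular $A_2\subseteq\mathbf{T}[2^e]$ and $|A_2|=2^{en}$. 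Comparing orders forces $A_2=\mathbf{T}[2^e]$. As $t$ is a $2$-element of $\mathbf{T}$ of order dividing $2^e$, this yields $t\in\mathbf{T}[2^e]=A_2\subseteq T$. This is precisely where the hypothesis $4\mid(q-\ep)$ is needed: it guarantees that $2^e$ is the full $2$-part of $q-\ep$, so that the $\Fr$-fixed $2^e$-torsion already exhausts all of $\mathbf{T}[2^e]$.

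It then remains to upgrade ``$g$ and $t$ are $\mathbf{G}$-conjugate'' to ``$g$ and $t$ are $G$-conjugate''. Here the simple connectedness of $\mathbf{G}$ enters decisively: by Steinberg's theorem the centralizer $C_{\mathbf{G}}(g)$ of the semisimple element $g$ is connected, and hence the $\mathbf{G}$-conjugacy class of $g$ meets $G$ in a single $G$-class (the obstruction lies in $H^1(\Fr,\,C_{\mathbf{G}}(g)/C_{\mathbf{G}}(g)^\circ)$, which is trivial). Since $g,t\in G$ are $\mathbf{G}$-conjugate, they are therefore $G$-conjugate, and as $t\in T$ this shows $g$ is $G$-conjugate to an element of $T$, as required.

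I expect the only genuinely delicate step to be this final descent: one must ensure that conjugacy in the ambient group really refines to conjugacy in $G$, and this can fail when $C_{\mathbf{G}}(g)$ is disconnected (the $\mathbf{G}$-class splits into several $G$-classes), so the simply connected hypothesis is essential rather than cosmetic. Everything else is bookkeeping: the semisimplicity of $g$, the conjugacy of maximal tori in $\mathbf{G}$, and the order count identifying $A_2$ with the full $2^e$-torsion of $\mathbf{T}$. It is worth verifying via Lemma \ref{sylow}(iii)--(iv) that $A_2$ is genuinely homocyclic of exponent exactly $2^e$ (not merely of $2$-power order $2^{en}$), since the containment $A_2\subseteq\mathbf{T}[2^e]$ relies on the exponent being $2^e$.
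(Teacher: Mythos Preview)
Your proposal is correct and follows essentially the same route as the paper: conjugate $g$ into the algebraic torus $\mathbf{T}$, use the order count $|\mathbf{T}[2^e]|=2^{en}=|A_2|$ to force the image into $T$, and then invoke connectedness of $C_{\mathbf{G}}(g)$ (via simple connectedness and Steinberg's theorem) to descend from $\mathbf{G}$-conjugacy to $G$-conjugacy. Your write-up is in fact slightly more explicit than the paper's in justifying why $q$ is odd and why the $2^e$-torsion of $T$ has the required order (via the direct-product structure of $T$ from Lemma~\ref{sylow}(iii)).
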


 \bp (ii) Let  $ {\mathbf T}$  be an $\Fr$-stable maximal torus of ${\mathbf G}$ such that $ T={\mathbf T}^{\Fr}$. Let $g\in G$ with $g^{2^e}=1$. It is well-known that $g$ is ${\mathbf G}$-conjugate  to an element $g'\in {\mathbf T}$.
 Set $T_2=\{t\in {\mathbf T}:t^{2^e}=1\}$.   Then $|T_2| = 2^{en}$. Therefore, $T_2$ coincides with the subgroup $\{x\in T :x^{2^e}=1\}$, so  $g'\in T$. As ${\mathbf G}$ is simply connected, $C_{\mathbf G}(g)$ is connected  \cite[Ch.II, 3.9]{SS}. By
\cite[Ch.I, 3.4]{SS}, the elements $g, g'\in G$ are conjugate
in $G$ provided $C_{{\mathbf G}}(g) $ is connected. So the claim follows. \enp


\section{Some observations on representations of groups of Lie type}

Recall that $\Irr(G)$ partitions into (rational) Lusztig series denoted by ${\mathcal E}_s$, where $s$ runs over the representatives of the conjugacy classes of semisimple elements of the dual group $G^*$.
The characters in ${\mathcal E}_1$ are called unipotent.

\bl{aa1} Let $T$ be a maximal torus of a finite reductive group $G=\GC^F$, and let $t_1,t_2\in T$ be regular elements.
If $t_1,t_2$ are conjugate in G then they are conjugate in $N_G(T)$.
\el

\bp Let $\mathbf{T}$ be the maximal torus of $\mathbf{G}$ containing $T$. Then $\mathbf{T}$ is unique and
$t_1,t_2$ are conjugate in $N_{\mathbf{G}}(\mathbf{T})$. Let $nt_1n\up=t_2$ with $n\in N_{\mathbf{G}}(\mathbf{T})$. Then $\Fr(n)t_1\Fr (n\up)=t_2$, whence $n\up \Fr (n)t_1n\Fr (n\up)=t_1$,
that is, $n\up \Fr (n)\in C_{\mathbf{G}}(t_1)=\mathbf{T}$. By the Lang theorem, $n\up \Fr (n)=t\up \Fr (t)$ for some $t\in \mathbf{T}$. So $tn\up=\Fr (t)\Fr (n\up)=\Fr (tn\up)$, so $tn\up\in G$ and $x:=nt\up\in G$. Clearly, $xt_1x\up=t_2$ and $x\in
N_{\mathbf{G}}(\mathbf{T})\cap G=N_{G}(\mathbf{T})$. As $T=\mathbf{T}\cap G$, we have $xTx\up=T$, as required.\enp

\begin{lemma}\label{sy1}
Let G be a finite  group of Lie type. Let $\chi$ be an \ir unipotent character of $G$, and $S$ a maximal torus of G. 
Suppose that every element $1\neq t\in S$ is regular (so G is simple). Then $\chi$ is constant on $S\smallsetminus \{1\}$ and $\chi(t)\in\{0,1,-1\}$. Equivalently,
  $\chi|_{S}=\frac{\chi(1)-\eta}{|S|}\cdot \rho_S^{\reg}+\eta\cdot 1_S$, where $\eta\in\{0,1,- 1\}$.
  \end{lemma}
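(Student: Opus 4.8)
The plan is to compute $\chi(t)$ for $t\in S\smallsetminus\{1\}$ through Deligne--Lusztig theory, exploiting that by hypothesis every such $t$ is regular semisimple. Write $S=\mathbf{T}^\Fr$, where $\mathbf{T}=\mathbf{T}_w$ is an $\Fr$-stable maximal torus of type $w\in W$, and let $R^1_{\mathbf{T}_{w'}}$ denote the Deligne--Lusztig virtual character attached to the trivial character of $\mathbf{T}_{w'}^\Fr$. Since $\chi$ is unipotent, it is orthogonal to every $R_{\mathbf{T}}^\theta$ with $\theta$ nontrivial, so its uniform projection is $\chi^{\mathrm{unif}}=\sum_{[w']}|C_{W,\Fr}(w')|^{-1}\langle\chi,R^1_{\mathbf{T}_{w'}}\rangle\,R^1_{\mathbf{T}_{w'}}$, the sum running over the $\Fr$-conjugacy classes $[w']$ of $W$ (recall $C_{W,\Fr}(w')\cong N_G(\mathbf{T}_{w'})/\mathbf{T}_{w'}^\Fr$ and $\langle R^1_{\mathbf{T}_{w'}},R^1_{\mathbf{T}_{w'}}\rangle=|C_{W,\Fr}(w')|$). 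The first ingredient I would use is the standard fact that a class function agrees with its uniform projection on every regular semisimple element; hence $\chi(t)=\chi^{\mathrm{unif}}(t)$ for all $t\in S\smallsetminus\{1\}$.

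Next I would evaluate the Deligne--Lusztig characters at such a $t$. As $t$ is regular, $C_{\mathbf{G}}(t)^\circ=\mathbf{T}$, and the character formula collapses (the Green function contributes $1$, the unipotent part of $t$ being trivial) to $R^1_{\mathbf{T}_{w'}}(t)=|C_{\mathbf{G}}(t)^{\circ\,\Fr}|^{-1}\,|\{x\in G:x\up t x\in\mathbf{T}_{w'}\}|$. Regularity forces $x\up\mathbf{T}x=\mathbf{T}_{w'}$ whenever $x\up t x\in\mathbf{T}_{w'}$, so the counting set is empty unless $\mathbf{T}_{w'}$ is $G$-conjugate to $\mathbf{T}$ (equivalently $w'$ is $\Fr$-conjugate to $w$), in which case it is a coset of $N_G(\mathbf{T})$ and the value is $|N_G(\mathbf{T})|/|S|=|C_{W,\Fr}(w)|$. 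Substituting into the uniform-projection formula, every class except $[w]$ drops out and the normalising factors cancel, giving $\chi(t)=\langle\chi,R^1_{\mathbf{T}_w}\rangle$ for all $t\in S\smallsetminus\{1\}$. This already shows that $\chi$ is constant on $S\smallsetminus\{1\}$ with common value $\eta:=\langle\chi,R^1_{\mathbf{T}_w}\rangle\in\ZZ$; the displayed "equivalent" reformulation then follows exactly as in Lemma \ref{tr1}, a class function taking the value $\eta$ off the identity and $\chi(1)$ at $1$ necessarily being $\frac{\chi(1)-\eta}{|S|}\rho_S^{\reg}+\eta\cdot 1_S$.

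The remaining, and principal, obstacle is to show $\eta\in\{0,1,-1\}$, for which the hypothesis that \emph{every} nontrivial element of $S$ is regular is essential: it forces $w$ to be a regular element in Springer's sense whose relevant eigenspace is a line, so that $S$ is cyclic and the relative Weyl group $N_G(\mathbf{T}_w)/S\cong C_{W,\Fr}(w)$ is cyclic (a rank-one complex reflection group). One cannot extract $|\eta|\le 1$ from the character-theoretic constraints alone, since for a general torus (say the split torus $w=1$) the multiplicities $\langle\chi,R^1_{\mathbf{T}_w}\rangle$ routinely exceed $1$. I would therefore invoke Lusztig's determination of the unipotent constituents of $R^1_{\mathbf{T}_w}$ for regular $w$ -- equivalently the rank-one case of the $\Phi_d$-Harish--Chandra theory of Brou\'e--Malle--Michel -- which gives that each unipotent character occurs in $R^1_{\mathbf{T}_w}$ with multiplicity $0$ or $\pm 1$. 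For the explicit list of groups in Theorem \ref{mth1} this last step can instead be verified directly against the known tables of unipotent characters and their values on the (Coxeter-type and related) cyclic regular tori, where the multiplicities are the values $\phi(w)$ of the irreducible characters of the cyclic relative Weyl group and are visibly in $\{0,\pm1\}$.
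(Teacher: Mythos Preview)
Your derivation that $\chi(t)=\langle\chi,R^1_{\mathbf{T}_w}\rangle$ for every $t\in S\smallsetminus\{1\}$ is correct and is essentially the same calculation the paper carries out (the paper phrases it via the formula $R_{T,\theta}(h)=\ep(T)\ep(G)\theta^G(h)/\StC(h)$ on semisimple elements, but the content is identical). Where you diverge is in the final step, pinning down $\eta\in\{0,\pm1\}$.

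The paper does \emph{not} appeal to Lusztig's Coxeter-orbit results or to $\Phi_d$-Harish--Chandra theory. Instead it observes that, since $C_G(t)=S$ for all $1\neq t\in S$, the torus $S$ contains a Sylow $p$-subgroup for any prime $p\mid|S|$, and every $p$-singular element of $G$ is conjugate into $S$. Hence $\chi$ is constant on all $p$-singular elements, and then \cite[Theorem~1.3]{PZ} (Pellegrini--Zalesski) forces $\chi$ into the principal $p$-block with $\chi(t)\in\{\pm1\}$ (the value $0$ having been handled separately). This is a uniform argument requiring no case analysis and no knowledge of the multiplicities $\langle\chi,R^1_{\mathbf{T}_w}\rangle$.

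Your route is also valid for the groups in Theorem~\ref{mth1}, since the tori $S$ that actually arise there are Coxeter (or dual-Coxeter) tori, for which Lusztig did prove the multiplicities lie in $\{0,\pm1\}$. However, your general assertion that ``the rank-one case of $\Phi_d$-Harish--Chandra theory gives each unipotent character multiplicity $0$ or $\pm1$ in $R^1_{\mathbf{T}_w}$'' is not a theorem one can simply cite off the shelf for arbitrary $G$ and arbitrary regular $w$; it is precisely the kind of statement that the Brou\'e--Malle--Michel programme establishes type by type. So your argument, as written, leans on machinery that is both heavier and less self-contained than the paper's, and your honest fallback to checking tables is in effect what would be needed. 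The paper's appeal to \cite{PZ} sidesteps all of this.
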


\begin{proof} For a function $f$ of $G$ denote by $f^{\#}$ the restriction of $f$ to the set of semisimple elements of $G.$ By the Deligne-Lusztig theory, if $\chi\in \Irr(G)$ then $\chi^{\#}$ is a $\QQ$-linear combination of $R^{\#}_{T_i,\theta_i}$, where  $R_{T_i,\theta_i}$ are some Deligne-Lusztig characters,  $T_i$ is a maximal torus of $G$ and $\theta_i$ is a linear character of $T_i$. Let $a_i$ be the coefficient of $R^{\#}_{T_i,\theta_i}$ in the expression in question. The  values $R_{T_i,\theta_i}(h)$
at the semisimple elements $h\in G$ are given by the formula $R_{T_i,\theta_i}(h)=\ep(T_i)\ep(G)\theta_{i}^G(h)/\StC(h)$, where
$\StC$ is the Steinberg character of $G$ and $\ep(T_i),\ep(G)\in\{\pm 1\}$, see for instance \cite[Prop. 7.5.4]{C}. It is well known that a regular semisimple element
of $G$ lies in a unique maximal torus, so either $T_i$ is conjugate to $ S$ or
$R_{T_i,\theta_i}(t)=0$ for every $t\in (S\setminus\{1\})$.  Therefore,  we conclude that
either $\chi(t)=0$ for all  $t\in (S\setminus\{1\})$,  or $ \chi(t)=\sum a_iR_{S,\theta_i}(t)$, with some non-zero coefficient  $a_i$. (Hence $\chi=\sum a_i R_{S,\theta_i}+f$, where $f$ is a class function vanishing on $S\setminus\{1\}$.) Furthermore, $\StC(h)=\ep(T_i)$
whenever $h$ is regular and $h\in T_i$. So  $R_{S,\theta_i}(t)=\ep(G)\theta_{i}^G(t)$, and hence $\chi(t)=\ep(G)\cdot \sum a_i\theta_{i}^G(t)$. Furthermore, if $h,h'\in S$ are conjugate in $G$ and regular then $h,h'$ are conjugate in $N=N_G(S)$ by Lemma \ref{aa1}. As $C_G(t)=S$, it follows that $\theta_{i}^G(t)=\theta^{N}_{i}(t)$.

If  $\chi$ vanishes on $S\smallsetminus \{1\}$ then $\chi|_{S} $ is a multiple of the regular character $\rho_{S}^{\reg}$.

 Suppose that $\chi$ is unipotent. Then $\theta_i=1_{S}$ is the trivial character of $S$, so $\chi(t)=(\sum a_i)\ep(G)\cdot 1_{S}^G(t)=a\ep(G)|N/S|$, where $a=\sum a_i$. In particular, $\chi$ is  constant on $S\setminus\{1\}$.

 Let $p$ be a prime dividing $|S|$. Then   $S$ contains a \syl of $G$. As $C_G(t)=S$ for $1\neq t\in S$,   every $p$-singular element is conjugate to that in $S$. Therefore,
 $\chi$ is constant at the $p$-singular elements of $G$.
 Then, by \cite[Theorem 1.3]{PZ},  $\chi$  belongs to the principal $p$-block of $G$ and
 $\chi(t)=\eta\in\{\pm 1\}$. Therefore, $\chi(t)=\eta$   for every $1\neq t\in S$. So we are done in this case.\end{proof}

\begin{remar}
{\em Suppose that $\chi$ is not unipotent. Then $\theta_i\neq 1_{T_i}$. Then  $\theta_{i}^G(h)$ is the sum of $\theta_i(h')$, where $h'$ runs over all
elements of $ T_i$ that are conjugate to $h$. The number of them is $N_G(T_i)/T_i$ as $T_i$ is a TI-set, and this does not depend on the choice of $1\neq h\in T_i$.
Then  $\theta_{i}^G(h)$ is the sum of $|N_G(T_i)/T_i|$
non-trivial $|h|$-roots of unity.}
\end{remar}

\med
 An \ir Brauer character $\phi$ for $\ell$ different from the defining characteristic of $G$ is called unipotent if $\phi$
 is a constituent of $\chi^\circ$ for some unipotent ordinary character $\chi$. Let $G^*$ be the group dual to $G$. For  a semisimple $\ell'$-element   $s\in G^*$, denote by $\mathcal{E}_{\ell,s}$ the union of the sets $ \mathcal{E}_{ys}$, where  $y\in G^*$, $ys=sy$ and $|y|$ is an $\ell$-power. Then $\mathcal{E}_{\ell,s}$ is a union of $\ell$-blocks (\cite[Theorem 9.4.6]{Cr}), so for every  $\phi\in \Irr_{\ell} G$  there exists a semisimple $\ell'$-element $s\in G^*$ such that $\phi$
is a constituent of $\chi^\circ$ for some $\chi\in\mathcal{E}_{\ell,s}$. (Moreover, $\chi$ can be chosen in $\mathcal{E}_{s}$ \cite[Theorem 3.1]{H1}.) Therefore, it is meaningful to write $\phi\in\mathcal{E}_{\ell,s}$.

\bl{uu1}
Let $\phi\in\mathcal{E}_{\ell,s}$ be a Brauer character. Then the restriction of $\phi$ to semisimple $\ell'$-elements is a $\QQ$-linear combination
 of the ordinary characters of $\mathcal{E}_{s}$ restricted to semisimple $\ell'$-elements.
\el

\bp Let $\chi\in \mathcal{E}_{s}$ be such that  $\phi$ is a constituent of $\chi^\circ$.
Every \ir Brauer character of an $\ell$-block is a $\ZZ$-linear combination of the ordinary characters of this block restricted to $\ell'$-elements \cite[Lemma 3.16]{N}, so
$\phi$ is a $\ZZ$-linear combination of the ordinary characters of $\mathcal{E}_{\ell,s}$ restricted to  $\ell'$-elements.
In  turn, the ordinary characters of $\mathcal{E}_{\ell,s}$ restricted to the semisimple elements are $\QQ$-linear combinations  of the Deligne-Lusztig characters defining $\mathcal{E}_{\ell,s}$ restricted to semisimple elements (see \cite[Lemma 4.1]{TZ8}). Therefore,  $\phi$ is a  $\QQ$-linear combination of such Deligne-Lusztig characters restricted to semisimple $\ell'$-elements.
As every Deligne-Lusztig character from $ \mathcal{E}_{ys}$ for $y\in C_G(s)$, $ys\neq 1$, restricted to semisimple elements coincides with some
Deligne-Lusztig character from $ \mathcal{E}_{s}$ restricted to semisimple elements
\cite[Prop 2.2]{H1}, $\phi$  is a $\QQ$-linear combination 
 of the ordinary characters of $\mathcal{E}_{s}$ restricted to semisimple $\ell'$-elements,  the result follows.\enp

\begin{remar}
{\em There is a conjecture that $\phi$ is a  $\ZZ$-linear combination of the ordinary characters of $\mathcal{E}_{s}$ restricted to $\ell'$-elements. This has been proven for many cases, see \cite[Theorem 5.1]{GH} and \cite[Section 9]{Cr},
in particular, this is true if $G={}^2F_4(q)$ and ${}^3D_4(q)$, and if $G=SL_3(q)$, $\SU_3(q)$ for $\ell\nmid |Z(G)|$.}
\end{remar}

\begin{corol}\label{sy2} Under the assumptions of Lemma {\rm \ref{sy1}} 
let $\phi$ be a unipotent $\ell$-Brauer character of $G$. 
Then $\phi$ is constant on the set $S\smallsetminus \{1\}$.
\end{corol}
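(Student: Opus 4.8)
The plan is to deduce this directly from the two preceding results, using Lemma~\ref{sy1} as the ordinary input and Lemma~\ref{uu1} to transfer that input to Brauer characters. The key observation is that a unipotent Brauer character carries the trivial semisimple datum: by the definition recalled just above, $\phi$ is a constituent of $\chi^\circ$ for some ordinary unipotent character $\chi$, so $\phi\in\mathcal{E}_{\ell,1}$, i.e.\ the semisimple $\ell'$-element $s\in G^*$ attached to $\phi$ may be taken to be $s=1$. This is exactly what is needed so that the ordinary characters produced below are themselves unipotent.

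Concretely, I would first apply Lemma~\ref{uu1} with $s=1$: the restriction of $\phi$ to the semisimple $\ell'$-elements of $G$ is a $\QQ$-linear combination $\phi^{\#}=\sum_\chi a_\chi\,\chi^{\#}$, where $\chi$ ranges over the ordinary characters of $\mathcal{E}_1$, that is, over the ordinary unipotent characters, and $a_\chi\in\QQ$. Next I would note that every element of the torus $S$ is semisimple, so the $\ell'$-elements of $S\smallsetminus\{1\}$ lie among the semisimple $\ell'$-elements on which this identity holds. Finally, Lemma~\ref{sy1} asserts that each ordinary unipotent $\chi$ is constant on $S\smallsetminus\{1\}$; hence so is any $\QQ$-linear combination of the $\chi$, and therefore $\phi$ is constant on the $\ell'$-elements of $S\smallsetminus\{1\}$. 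Since a Brauer character is defined only on $\ell'$-elements, this is precisely the assertion.

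I do not anticipate a serious obstacle, as the substance of the argument already resides in Lemmas~\ref{sy1} and~\ref{uu1}; the corollary is essentially a bookkeeping step. The one point to verify with care is that a unipotent Brauer character genuinely yields $s=1$ in the sense of Lemma~\ref{uu1}, so that the ordinary characters supplied by that lemma are unipotent, and not members of some other Lusztig series (which Lemma~\ref{sy1} would not control). This follows from the definition of ``unipotent Brauer character'' as a constituent of $\chi^\circ$ with $\chi$ unipotent, together with the fact that $\mathcal{E}_{\ell,1}$ is a union of $\ell$-blocks. A minor matter of interpretation is that ``constant on $S\smallsetminus\{1\}$'' is to be read on the $\ell'$-elements of that set, since the values of $\phi$ at $\ell$-singular elements are undefined; on those elements there is nothing to prove.
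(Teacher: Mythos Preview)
Your proposal is correct and takes essentially the same approach as the paper: the paper's proof is the single sentence ``This follows from Lemmas~\ref{sy1} and~\ref{uu1},'' and you have accurately unpacked what that sentence means, including the identification $s=1$ for unipotent Brauer characters and the reading of ``constant on $S\smallsetminus\{1\}$'' on $\ell'$-elements only.
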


\begin{proof} This follows from Lemmas {\rm \ref{sy1}} and \ref{uu1}. \end{proof}

 The \f lemma refines Theorem 4.2 in \cite{TZ8}.

\bl{cs2}
Under the assumptions of Lemma {\rm \ref{sy1}} let $\phi\in \Irr_{\ell} G$ and $\phi\in {\mathcal E}_{\ell,s}$. 
Then one of the \f holds:

\begin{enumerate}[\rm(i)]
\item $s=1$, $\phi$ is unipotent and constant on the $\ell'$-elements of $S \setminus \{1\};$

\item $s\neq 1$, $|s|$ is coprime to $|S|$ and $\phi(t)=0$ for all $\ell'$-elements  $t\in (S \setminus \{1\});$

\item $s\neq 1$, $|s|$ divides $|S|$ and $\phi$ lifts to characteristic $0$.
\end{enumerate}
\el

\begin{proof} If $s=1$ then $\phi$ is unipotent, and we have (i) by Corollary \ref{sy2}.
Let $s\neq 1$.

 Suppose first that $|s|$ is coprime to $|S|$.
Let $\chi\in {\mathcal E}_{s}$. Then, on restriction to semisimple elements, $\chi$ agrees with a $\QQ$-linear combination of   the Deligne-Lusztig characters $R_{T_i,\theta_i}$ defining $\mathcal{E}_{s}$ (see \cite[Lemma 4.1]{TZ8}). Here, $|\theta_i|=|s|$ by \cite[Lemma 2.1(a)]{H1}, so $|s|$ divides $|T_i|$. Therefore, $T_i$ is not conjugate to
$S$, and hence $R_{T_i,\theta_i}(t)=0$ for every $1\neq t\in S$. So $\chi(s)=0$ for every $\chi\in {\mathcal E}_{s}$. Now (ii) follows by
Lemma \ref{uu1}.

Suppose that $|s|\neq 1$  divides $|S|$.  As $ys$ is a regular semisimple element for $y\in C_G(s)$, $ys\neq 1$, and hence $S=C_G(ys)$, the set ${\mathcal{E}_{ys}}$ consists of a single character of degree $d=|G|_{r'}/|S|$, where $r$ is the defining characteristic of $G$. In addition, ${\mathcal{E}_{\ell,s}}$ is a union of $\ell$-blocks, so, by \cite[Lemma 3.16]{N}, every \ir Brauer character $\phi$ in any of these blocks is a $\ZZ$-linear combination of ordinary characters of degree $d$, and $\phi$ itself is a constituent of $\eta^\circ$ for some irreducible character $\eta \in {\mathcal{E}_{\ell,s}}$, which is of degree $d$.
Hence $\phi(1)=d$ and $\phi=\eta^\circ$. \end{proof}

\section{Unipotent elements in $\GL_n(F)$, char$\,F=2$}

Let $1\neq g\in \GL_n(2)=\GL(V)$ be a 2-element and $z\in\lan g\ran$ an involution.
We set
$$j(g) =j(z):= \dim(\Id-z)V.$$
Note that $j(g)$ equals the number of blocks of size 2 in the Jordan canonical form of $z$.

\bl{3jn}
Let $g\in \GL_n(2)$ be an element of order $2^{m+1}$, $m>0$,  and $z:=g^{2^m}$.
Let $J(g)=(J_{n_1}\ld J_{n_k})$ be the Jordan canonical form of g. Set $l_i=\max(0,n_i-2^m)$ for $i=1\ld k$. Then $j(g)=\sum l_i$.\el

\begin{proof} It suffices to prove the statement in the case $k=1$, where we have
$$j(g)=\dim(\Id-z)V=\dim(\Id-g)^{2^m}(V)=n-2^m=l_1.$$
\end{proof}

\bl{2yy}
Let $0<c<d<n$ be integers, and $n=kd+l$ for $0\leq l<d$. For a sequence $\lam=(n_1\geq \cdots\geq  n_k\geq 0)$ set $\overline{\lam}=(l_1\ld l_k)$  where
 $l_i=\max(0,n_i-c)$ for $i=1\ld k$. Suppose that $\sum n_i=n$. Then
$$\sum _il_i\leq k(d-c)+\max(0,l-c).$$
In addition, if $d<d'<n$ then
$$k(d-c)+\max(0,l-c)\leq k'(d'-c)+\max(0,l'-c),$$
where $n = k'd'+l'$ with $0 \leq l' < d'$.\el

\begin{proof} This becomes clear if one views $\lam$ as a Young diagram of size $n$, and let $\mu=(d\ld d,l)$, where $d$ is repeated $k$ times. Then $\mu$ can be obtained from $\lam$ by moving down certain boxes of $\lam$ (note that $n_1\geq d$). In addition, $\overline{\lam}$ is a Young diagram of size $\sum _il_i$, obtained from $\lam$ by deleting the first $c$ columns.   (Note that $\max(0,n_1-c)\geq \cdots \geq \max(0,n_k-c)\geq 0$.)
The first assertion in the lemma means that  the size of $\overline{\lam}$ is not greater than the size of $\overline{\mu}=(d-c\ld d-c,\max(0,l-c))$,  which is again obtained from $\overline{\lam}$
 by removing the first $c$ columns. It is clear that the number of boxes removed from $\mu$ to obtain $\bar\mu$ is at least the number of boxes removed from $\lam$ to obtain
$\bar\lam$, whence the assertion follows. Moreover, this number of removed boxes does not increase if one uses $d'$ instead of $d$ to form $\mu$, whence the second assertion follows.
\end{proof}

\bl{77n}
Let $g\in \GL_n(2)$ be an element of order $|g|=2^{m+1}$, $m> 0$.
Let $d$ be the minimum \po degree of g.
 Suppose that   $d<|g|$. Then
$$j(g)\leq  (n-l)(1-\frac{|g|}{2d})+\max(0,l-\frac{|g|}{2}),$$
where $n\equiv l\pmod{d}$ and $0\leq l<d$. If $d=|g|-1$ then $1-\frac{|g|}{2d}=\frac{|g|-2}{2(|g|-1)}$.

\el

\begin{proof}
We first show that the bound is attained. Write 
$n=kd+l$ with $k\geq 0$. Let $x=\diag(J_d\ld J_d,J_l)$, where $J_d$ occurs $k$ times. By Lemma \ref{3jn},
 $j(J_l)=
\max(0,l-\frac{|g|}{2})$ and $j(J_d) =d-\frac{|g|}{2}$. So $j(g)=k(d-\frac{|g|}{2})+\max(0,l-\frac{|g|}{2})$ and $k=\frac{n-l}{d}$. Then $j(g)=\frac{n-l}{d}(d-\frac{|g|}{2})+\max(0,l-\frac{|g|}{2})$, as claimed.

The Jordan form of  unipotent elements  $g\in \GL_n(2)$ can be encoded by
the Young diagrams, that is, $(n_1\ld n_k)$ corresponds to  $\diag(J_{n_1}\ld J_{n_k})$, where we assume $n_1\geq\cdots\geq n_k $, so $n_1$ is the minimum \po degree of $g$.
Then the inequality in the lemma follows from Lemma \ref{2yy} above. Indeed, if $g$ is as in the statement then  $n_1=d>c=:\frac{|g|}{2}$. \end{proof}

We specify the result of Lemma \ref{77n} as follows to make it more convenient for use in the next section.

\bl{fr0}
Let $g\in \GL_n(2)$ be a $2$-element.

\begin{enumerate}[\rm(i)]
\item Suppose that $|g|\leq (q+1)/2$. 
$$ j(g)\leq \frac{n(q-3)}{2(q-1)}+\frac{q-7}{4}.$$

\item Suppose that $|g| = q+1$. Then we have
$$j(g)\leq\begin{cases}  \frac{n(q-1)}{2q}&if \,\, q|n;\\
\frac{(n-1)(q-1)}{2q}&if \,\, q|(n-1);\\
\frac{(n+1)(q-1)}{2q}-1&if \,\,  q|(n+1).
\end{cases}$$
\end{enumerate}
\el

\begin{proof} (i) We have $|g|\leq (q+1)/2$ and $l-\frac{|g|}{2}\leq \frac{|g|}{2}-2 \leq \frac{q+1}{4}-2=(q-7)/4$.
By Lemma \ref{77n}, we have
$j(g)\leq n(1-\frac{|g|}{2d})+\frac{q-7}{4}$. Note that
$1-\frac{|g|}{2d}\leq 1-\frac{|g|}{2(|g|-1)}$ as $d\leq |g|-1$.

Furthermore, $|g|\leq (q+1)/2$ implies
$$1-\frac{|g|}{2(|g|-1)}=1-\frac{1}{2- (2/|g|)}\leq 1-\frac{1}{2-(2/(q+1)}=1-\frac{q+1}{2q-2}=\frac{q-3}{2(q-1)}.$$
So
$$j(g)\leq \frac{n(q-3)}{2(q-1)}+\frac{q-7}{4}.$$

\smallskip
(ii) Let $|g|=q+1=2^{m+1}$ and $d=|g|-1=q$. 

(a) Let $q|n$. Then $j(g)\leq\frac{n(|g|-2)}{2(|g|-1)}=\frac{n(q-1)}{2q}$.

(b) Let $q|(n-1)$. Then $j(g)\leq \frac{(n-1)(|g|-2)}{2(|g|-1)}=\frac{(n-1)(q-1)}{2q}$.

(c) Let $q|(n+1)$. Then $l=d-1=|g|-2=q-1$ so
$$j(g)\leq\frac{(n-q+1)(|g|-2)}{2(|g|-1)} +|g|-2-(|g|/2)=\frac{(n+1)(q-1)}{2q}-1.$$
\end{proof}

\section{The case of $\SU_3(q)$}

In this section we refine our results on minimal polynomials of elements of the group $\SU_3(q)$ in its cross-characteristic \ir representations. The main result is Proposition \ref{pr5}.

 \begin{lemma}\label{cv8}
 Let $S=\SU_3(q)\leq G \leq H=U_3(q)$, $2 \nmid q$, and let $g\in G $ be a non-central
 semisimple $2$-element, and let $|g|=2^\al$.
Let $\phi\in{\rm IBr}_{\ell}(G)$ with $(\ell,q)=1$ and $\dim\phi>1$. Suppose that $\deg\phi(g)<o(g)$.
Then one of the \f holds:
\begin{enumerate}[\rm(i)]
\item $\ell=2$,  g is not a pseudoreflection and $|g|$ divides $q+1;$

\item $q+1=2^\al$, $g$ is a pseudoreflection and $\deg\phi=o(g)-1$,

\item  $q+1=2^{\al-1}$, $g^2$ is a pseudoreflection and $\deg\phi=o(g)-2.$

\item  $|g|=q+1=4$, g is not a pseudoreflection and $\deg\phi=3;$
\end{enumerate}
\noindent In addition, in cases {\rm (ii)}, {\rm (iii)} and {\rm (iv)}, $\Phi$ is a Weil \rep of G.
\el

\begin{proof} Suppose first that $g$ is contained in a parabolic subgroup of $G$.
 Then, applying the main result of \cite[Theorem 13.2]{DZ1}
 and \cite[Theorem 3.2]{GMST}, we conclude
that  (ii) or (iii) holds.

Suppose that $g$ is not contained in any parabolic subgroup of $G$. Note that $g$ is contained in a
maximal torus $T$ of $H$, and $|T|\in\{ q^3+1, (q+1)(q^2-1), (q+1)^3\}$. The tori of order
$ (q+1)(q^2-1)$ lie in parabolic subgroups, and those of order $ q^3+1$ contains no non-central 2-element. The torus of order $(q+1)^3$ is of exponent $q+1$, so $|g|$ divides $q+1$. If $\ell\neq 2$ or  $|g|=q+1=4$ then the argument of the proof \cite[Lemma 6.1]{TZ8} works (see page 653 there). So we have (i) and (iv). \end{proof}


\med
 We improve the conclusion in (i) of Lemma $\ref{cv8}$ as follows: 

\begin{propo}\label{pr5}
Let $G=\SU_3(q)$, q odd, and let $g\in G$ be a $2$-element of order dividing $q+1$. Let $\phi$ be a non-trivial \ir $2$-modular \rep of G.
Then the minimum \po of  $\phi(g)$ 
is of degree $|g|$, unless possibly $\dim \phi=q(q-1)$,  $|g|=q+1$ and $\deg\phi(g)=|g|-1$.
If $\deg\phi(g)=|g|-1$ then the Jordan form of $\phi(g)$ consists of  $q-1$ blocks of size q.
\end{propo}

Before proving Proposition \ref{pr5}, we deduce a consequence of it:

\begin{corol}\label{cr1}
Let $G=\SU_3(q)$, and let $g\in G$ be a $p$-element, $p \nmid q$.
 Let $o(g)$ be the order of g modulo $Z(H)$ and let $\phi$ be a non-trivial \ir $\ell$-modular \rep of $G$, $\dim\phi>1$. Then $\deg\phi(g)=o(g)$, unless, possibly, $\phi$ is a Weil \rep of $G$.
\end{corol}

\bp Note that $o(g)=|g|$ if $(p,q+1)\neq 3$, in particular, if $p\neq 3$.
 The result is contained in Proposition \ref{pr5} if $p=2$ (as $\dim\phi=q^2-q$ implies  $\phi$ to be Weil), and in \cite[Lemma 6.1]{TZ8} for $p>2$.\enp

The proof of Proposition \ref{pr5} occupies the rest of this section. This is trivial if $|g|=2$ so we assume $|g|\geq 4$. Then $4|(q+1)$. We start with some elementary observations. \med

Note that  the involutions in $G=\SU_3(q)$ are conjugate. Denote by $z$ an involution from
a parabolic subgroup $P$ of $G$. Let $U$ be the unipotent radical of $P$. We can write
\med

\centerline{$z=\begin{pmatrix} -1&0&0\\ 0&1&0\\ 0&0&-1\end{pmatrix}$ \,\, $U=\left\{u=\begin{pmatrix} 1&a&c\\ 0&1&b\\ 0&0&1\end{pmatrix}\right\},$ and $zuz\up=\begin{pmatrix} 1&-a&c\\ 0&1&-b\\ 0&0&1\end{pmatrix}$}\med
\noindent where $a\in \FF^\times _{q^2}$, $c\in \FF^\times _{q}$ and $b=a^q$. So $|U|=q^3$, $[z,Z(U)]=1$. If $\psi$ is an \irr of $U$ of dimension not 1 then $\dim \phi=q$,  the \eis of $\phi(z)$
are $1,-1$, and their multiplicities are $(q-1)/2$ and $(q+1)/2$, with the \mult of $-1$ being even. As $4|(q+1)$,  we conclude that the \mult of 1 is $(q-1)/2$. In fact, if $\psi$ is  2-modular then
the Jordan form of $\phi(z)$ has $(q-1)/2$ blocks of size 2.

\bl{nb2}
Let $\phi$ be an \ir $2$-modular Brauer character of $G=\SU_3(q)$, $4|(q+1)$.
 Let  $z\in G$ be an involution.
Denote by $j(\phi(z))$ the number of non-trivial Jordan blocks of $\phi(z)$. Then
$$j(\phi(z))\geq \frac{(q-1)\bigl(\phi(1)-(\phi|_{Z(U)},1_{Z(U)})\bigr)}{2q}+
\frac{(\phi|_{Z(U)},1_{Z(U)})-(\phi|_{U},1_{U})}{2}.$$
\el

\begin{proof} Let $ V$ be the underlying module of $\phi$. Then $V|_U=V^U\oplus V_1\oplus V_2$, where $V_1+V^U=V^{Z(U)}$
and $V_2 = [V,Z(U)]$. So $\dim V_2=\dim V-\dim V^{Z(U)}$. Then $V_2$ is the direct sum of \ir $U$-modules
non-trivial on $Z(U)$, so  $V_2$ is the sum of $\frac{\dim V_2}{q}$ \ir $U$-modules of dimension $q$. In addition, $V_1$ is the sum of non-trivial one-dimensional $U$-modules. As $C_{U}(z)=Z(U)$, it follows that no non-trivial linear character of $U$ is $z$-invariant; this implies the number of Jordan blocks of $z$ on $V_1$ to be equal to $\dim V_1/2$. Therefore, the number in question equals

$$j(\phi(z) )=\frac{(q-1)\dim V_2}{2q}+\frac{\dim V_1}{2}.$$
As $\dim V^U=(\phi|_{U},1_{U})$, $\dim V^{Z(U)}= (\phi|_{Z(U)},1_{Z(U)})$, and $\dim V_1=\dim V^{Z(U)}-\dim V^U$, the statement follows.
\end{proof}

The terms of the formula in the lemma can be easily computed by using the character table of $G$ and the decomposition matrix of $G$ modulo 2. This is known to experts but the result is not explicitly written
in literature. So we provide some detailed comments below.

There are three unipotent characters of $G$, of degree 1, $q^3$ and $q^2-q$ \cite{Ge}. The latter is \ir modulo 2
\cite[Proposition 9]{HM}, and  $\chi^\circ_{q^3}$ decomposes  as $1_G+2\chi^\circ_{q^2-q}+\phi_0$, where  $\phi_0$ is \ir of degree $q^3-2q^2+2q-1$ \cite[Theorem 4.1]{H4}. This also shows that all these unipotent characters are in the principal block $B_0$. and they form a basic set for the union
$\mathcal{E}_2(G,1)$ of all $\mathcal{E}(G,s)$ with $s$ a $2$-element by \cite[Theorem A]{Ge3}. It follows that this union is precisely $B_0$, and $B_0$ contains $3$ irreducible Brauer characters.

Now we consider $\chi\in \mathcal {E}_2(G,s)$, where $s \in G^* = {\rm PGU}_3(q)$ is semisimple of odd order $|s| > 1$.  Again by \cite[Theorem A]{Ge3}, the characters in $\mathcal{E}(G,s)$ form a basic set for $\mathcal {E}_2(G,s)$. The element $s$ belongs to a maximal torus $T*$ of $G^*$, of order $(q+1)^2$, $q^2-1$, or $q^2-q+1$.

\medskip
Suppose first that $|T^*|=q^2-q+1$. Then $s$ is regular in $G^*$. If $|s|>3$ then ${\mathcal E}(G,s)$ consists of a unique character of
degree $(q+1)(q^2-1)$. If $|s|=3$ then $3|(q+1)$ and ${\mathcal E}(G,s)$ consists of three characters of degree $(q+1)(q^2-1)/3$. These characters are all of $2$-defect 0.

\medskip
Suppose next that $|T^*|=q^2-1$ but $s$ does not belong to any torus of order $(q+1)^2$. Then $s$ is regular 
and $|C_{G^*}(s)|=q^2-1$. So ${\mathcal E}(G,s)$ consists of a unique character of degree
$q^3+1$. As every Brauer character is an integral linear combination of ordinary characters in its block, we conclude that its degree is $q^3+1$ too, and it is liftable.

\medskip
Suppose now  that $|T^*| = (q+1)^2$. Then $|s|$ divides  $q+1$  and  one of the \f holds:

\smallskip
(i) $s$ is regular. If $|s|>3$ then $|C_{G^*}(s)|=(q+1)^2$ and ${\mathcal E}(G,s)$ consists of
a unique character of degree $(q-1)(q^2-q+1)$, hence, as above, the unique Brauer character in the block has the same degree. If
$|s|=3$, then $|C_{G^*}(s)|=3(q+1)^2$, and $\EC(G,s)$ consists of $3$ characters $\chi_{1,2,3}$ of the same degree
$(q-1)(q^2-q+1)/3$. It follows that every Brauer character of ${\mathcal E}_{2}(G,s)$ is of degree divisible
by $(q-1)(q^2-q+1)/3$. Hence, each $\chi_i^\circ$ is irreducible, and, as $\chi_{1},\chi_2,\chi_3$ form a basic set, we again see that each irreducible Brauer character is liftable.

\smallskip
(ii)  $s$ is not regular. Then
$C_{G^*}(s)\cong {\rm GU}_2(q)$, and $\EC(G,s)$ consists of two characters, $\chi_1$ (a Weil character) of degree $q^2-q+1$, and $\chi_2$ of degree $q(q^2-q+1)$.
It is well known that $\chi_1^\circ$ is irreducible, see e.g. \cite[Proposition 9]{HM}.
We can represent $s$ by a diagonal matrix $\diag(\alpha,\alpha,1)$ in ${\rm GU}_3(q)$ with $\alpha \neq 1$ of odd order dividing $q+1$.
Then the element $t \in G^*$ represented by $\diag(\alpha,-\alpha,1)$ centralizes $s$ and has $s$ as its $2'$-part, and
$\EC(G,t)$ consists of a unique character $\psi$ of degree $(q-1)(q^2-q+1)$.
Note that $\psi(t) = 2q-1$, $\chi_1(t)=1-q$, and $\chi_2(t)=q$ for a transvection $t \in G$, see \cite[Table 3.1]{Ge}. Since
$\psi^\circ$ is a linear combination of $\chi_1^\circ$ and $\chi_2^\circ$, it follows that
$\chi_2^\circ=\psi^\circ+\chi_1^\circ$, and $\{\psi^\circ,\chi_1^\circ\}$ is a basic set for $\EC_2(G,s)$.
We claim that $\psi^\circ$ is irreducible. Indeed, $\psi^\circ = a\gamma+b\chi_1^\circ$ for some irreducible Brauer character
$\gamma$ and some integers $a,b \geq 0$. Inspecting the multiplicity of $1_U$ and of any nontrivial linear character $\xi$ of
$U$, using \cite[Table 3.2]{Ge}, we obtain
$(\psi|_U,1_U)=0=a(\gamma|_U,1_U)+b$ and $(\psi|_U,\xi)=1= a(\gamma|_U,\xi)$, whence $(a,b)=(1,0)$, i.e. $\psi^\circ=\gamma$, as
claimed. Therefore, there are two irreducible Brauer characters in $\EC_2(G,s)$, of degree $q^2-q+1$ and
$(q-1)(q^2-q+1)$, and they both lift.

Thus, $\phi_0$ is  the only non-liftable 2-modular \ir Brauer character, and it is  of degree $q^3-1-2(q^2-q)$.

Let $1\neq u\in Z(U)$ and $v\in (U\setminus Z(U))$. Then $\phi(u),\phi(v)$ do not depend on the choice of $u,v$. Then
$$(\phi|_{Z(U)},1_{Z(U)})=\frac{1}{q}(\phi(1)+(q-1)\phi(u))\,\,
{\rm and} \,\,(\phi|_U,1_{U})=\frac{1}{q^3}(\phi(1)+\phi(v)(q^3-q)+\phi(u)(q-1)).$$

For our purpose we could ignore \ir \reps  of degree $(q+1)(q^2-1)$ as these are of 2-defect 0,
so the restrictions of them to the Sylow 2-subgroup of $G$ are the characters of projective modules.

\smallskip
Next,  for every non-trivial \ir 2-modular Brauer character $\phi$ of $G$ of non-zero defect we  compute the \mult of $1_{Z(U)}$ in $\phi|_{Z(U)}$ and the \mult of $1_{U}$ in $\phi|_{U}$. This can be easily done by using the character table of $G$. The results are summarized in Table 1.

\smallskip
\centerline{Table 1}

\med
\noindent{\small
\begin{tabular}{|l|c|c|c|c| c|c|}
\hline
$\phi$   & $\phi(1) $ &$\phi(u)$ & $(\phi|_{Z(U)},1_{Z(U)})$& $\phi(v)$& $(\phi|_U,1_{U})$& $(\phi|_{Z(U)},1_{Z(U)})-(\phi|_{U},1_{U})$ \\
\hline
$\phi_1$& $q^2-q$ &  $-q$   & $ 0 $&0&0&0
   \\
\hline
$\phi_2$& $ q^2-q+1$ & $-q+1$   &  $1$ &1&0&1\\
\hline
$\phi_4$& $ (q-1)(q^{2}-q+1)$ & $2q-1$   &$ q^2-1$ &$-1$&0& $q^2-1$ \\
\hline
$\phi^{*}_4$& $ (q-1)(q^{2}-q+1)/3$ & $(2q-1)/3$   &$ (q^2-1)/3$ &$x,y,y  $&0& $(q^2-1)/3$ \\
\hline
$\phi_5$& $ q^{3}+1$ & $1$   &$ q^2+1$&1&2&$q^2-1  $ \\
\hline
$\phi_7$& $ q^{3}$ & $0$   &$ q^2$&0&1&$q^2-1  $ \\
\hline
$\phi_0$& $ q^{3}-2q^2+2q-1$ & $2q-1$   &$ q^2-1$&$-1$&0&$q^2-1  $ \\
\hline
\end{tabular}}

\bigskip

In Table 1, $x=(2q-1)/3,y=-(q+1)/3$. Note that the character $\phi^*_{4}$ exists \ii $3|(q+1)$; in this case the set $U\setminus Z(U)$ is the union of   three conjugacy classes, 
 and $\phi^*_{4}(v)=y$ for $v$ in two of them, and $\phi^*_{4}(v)=x$ when $v$ lies in the remaining class.
 In fact, each \irr of $U_3(q)$ of degree  $ (q-1)(q^{2}-q+1)$ restricts to $G$ as the sum of three \ir \reps of degree $ (q-1)(q^{2}-q+1)/3$. If $\phi_{4a},\phi_{4b},\phi_{4c}$ are the characters of these \reps of $G$ and $v_1,v_2,v_3$ are representatives of the three conjugacy classes  in question then the corresponding fragment of the Brauer character table is

 \med
 \begin{center}
\begin{tabular}{|l|c|c|c|}
\hline
        & $v_{1} $ &$v_{2}$ & $v_3$  \\
\hline
$\phi_{4a}$& $x$ &  $y$   & $ y $    \\
\hline
$\phi_{4b}$& $ y$ & $x$   &  $y$ \\
\hline
$\phi_{4c}$& $ y$ & $y$   &$x$    \\
\hline
 \end{tabular}
 \end{center}
 \med

 \noindent This is irrelevant for computation of $(\phi_4^*|_U,1_U)$. On the other hand, if $1_U$ does not occur as a constituent of $\phi_4|_U$ then $1_U$ does not occur as a constituent of $\phi^*_4|_U$.

\med
Let

$$f_1(n)=\frac{n(q-3)}{2(q-1)}+\frac{q-7}{4}\,\,\,\,{\rm and}\,\,\,\,
 f_2(n)=\begin{cases}n(q-1)/2q&if\,\,n\equiv 0\pmod q\\
(n-1)(q-1)/2q&if\,\,n\equiv 1\pmod q\\
 \frac{(n+1)(q-1)}{2q}-1&if\,\,n\equiv -1\pmod q.\end{cases}$$ be the functions defined in Lemma \ref{fr0}(i), (ii), respectively. Then we have
 (where $n=\phi(1)$):

\med
\centerline{Table 2}
\begin{center}
\noindent{\small
\begin{tabular}{|l|c|c|c|c| }
\hline
    $\phi$   & $n=\phi(1) $ &$j(\phi(z))\geq $&$f_1(n)$  &$f_2(n) $\\
\hline
$\phi_1$& $q^2-q$ &  $(q-1)^2/2$&$(2q^2-5q-7)/4$&$(q-1)^2/2 $
   \\
\hline
$\phi_2$& $ q^2-q+1$ & $(q-1)^2/2$ &$(2q^3-7q^2+1)/4(q-1)$&\\ 
\hline
$\phi_4$& $ (q-1)(q^{2}-q+1)$ & $(q-1)(q^2-2q+3)/2 $&$(2q^3-8q^2+9q-13)/4$& $-1+\frac{(q-1)(q^2-2q+2)}{2}$ \\
\hline
$\phi_{4}^*$& $ (q-1)(q^{2}-q+1)/3$ & $(q-1)(q^2-2q+3)/6 $   &$q^3-7q^2+5q-12)/6$&\\
\hline
$\phi_5$& $ q^{3}+1$ & $(q-1)(q^2+1)/2 $ &$\frac{2q^4-6q^3+q^2-6q+1}{4(q-1)}$&$ q^2(q-1)/2 $\\
\hline
$\phi_0$& $(q-1)(q^{2}-q+1)$ & $(q-1)(q^2-2q+3)/2   $ &$(2q^3-8q^2+9q-13)/4$&$-1+\frac{(q-1)(q^2-2q+2)}{2}$\\
\hline
\end{tabular}}
\end{center}

\med
In Table 2 we have left blank certain positions in the fifth column as this column is created under assumption that $|g|=q+1$. Recall that if $q+1$ is a 2-power then there are no \ir 2-modular \reps of degree $q^2-q+1$ and $(q-1)(q^2+q+1)/3$. Note that the third column gives the lower bound for $j(\phi(z))$ from Lemma  \ref{nb2} obtained using Table 1.

\begin{proof}[Proof of Proposition {\rm \ref{pr5}}] Let $d=\deg\phi(g)$. Suppose the contrary, that $d<|g|$. Let $n=\dim \phi$. Suppose first that $|g|\neq q+1$. Then $|g|\leq (q+1)/2$. Let $z$ be the involution in $\lan g \ran$, so $j(\phi(g))$
is the number of  non-trivial blocks in the Jordan form of $\phi(z)$. By Lemma \ref{nb2},
$j(\phi(g))\geq t(n)$, where $t(n)$ is given in the 3-rd column of Table 2.

By Lemma \ref{fr0}, $d<|g|$ implies $j(\phi(g))\leq f_1(n)$, so $t(n)\leq f_1(g)$.
One easily observes that this is false, whence a contradiction.

Let $|g|= q+1$.  Inspecting  the second column of Table 2, one observes that  $\phi_i(1)\pmod q \in\{1,0,-1\}$ for $i=1,4,5,6$. So we can use Lemma \ref{fr0}(ii) to build the upper bound for $j(\phi(g))$, which is  written in the 5-th column there. As above, we compare the entries of the 3rd and 5th columns of Table 2, to observe that these are not compatible for $i=4,5,0$.  (There is no contradiction for $i=1$.)

Let $n=\phi(1)=q^2-q$. Then $\GL_n(2)$ contains the matrix $J:=(J_q\ld J_q)$, where $J_q$
is repeated $q-1$ times. Then $j(J)=(q-1)^2$ by Lemma \ref{2yy}. Let $(J_{n_1}\ld J_{n_k})$
be the Jordan form of $\phi(g)$, where $n_1\geq \cdots\geq n_k$. If this is not $J$ then, by Lemma \ref{2yy}, $j(\phi(g))<(q-1)^2$, which is a contradiction (by Table 2). \end{proof}


\section{Groups of BN-pair rank $1$}

\subsection{Groups ${}^2B_2(2^{2m+1})$ and ${}^2G_2(3^{2m+1})$}

\begin{lemma}\label{sz1}
Let $G={}^2B_2(2^{2m+1})$, $m>0$, be a Suzuki group.
For $p>2$ let $g\in G$ be a  $p$-element.  Let $1_G\neq \phi\in\Irr_\ell G$ and $\ell \neq 2$.
Then $\deg\phi (g)=|g|$.\end{lemma}

 \bp Note that Sylow $p$-subgroups of $G$ are cyclic. If $\ell=0$ or $\ell=p$, the result follows from \cite{Z2}, see also \cite[2.8]{z90} for $\ell=0$. Suppose $\ell\neq 0,p$. Let $\beta$ be the Brauer character of $\phi$. If $\beta$ is liftable, the result follows from
\cite{Z2}. The decomposition numbers of $G$ have been determined by Burkhardt \cite{Bu2}.
Inspection of them in \cite{Bu2} shows that $\phi$ either liftable or
$\StC(x)-1=\phi(x)$ for every $l'$-element $x\in G$.
As $\StC(x)\in\{\pm 1\}$ and $\StC(1)=2^{2(2m+1)}$, the result easily follows from Lemma \ref{tr1}.\enp

\begin{lemma}\label{re1}
Let $G={}^2G_2(q)$, $q=3^{2m+1}$, $m>0$. Let $g\in G$ be a  $p$-element for some prime  $p\neq 3$ dividing
$|G|$. Let $\ell \neq 3$, $\phi\in\Irr_\ell G$, $\dim\phi>1$. Then $\deg\phi (g)=|g|$. \end{lemma}

Proof. The lemma is trivial for $p=2$, as every $2$-element of $G$ is an involution.

Let $p>2$. Then Sylow $p$-subgroups of $G$ are cyclic. Let $\beta$ be the
Brauer character of $\phi$. If $\ell\in\{0,p\}$ or $\beta$ is liftable, the result
follows from \cite{Z2}. Suppose otherwise.

 Let $\ell=2$.  By \cite[p.104]{LM}, $\be$ lies in the principal block and $\be\in\{\be_1,\be_2\}$, where
 $\be_2=\xi_2 - 1_G$ and  $\be_3=\StC^\circ+1_G -2\xi^\circ_2-\xi^\circ_6-\xi^\circ_8$ in notation of \cite{LM}.

Let $\ell> 2$. Then the Sylow $\ell$-subgroups of $G$ are cyclic. The Brauer tree for
$G$ for every $\ell>2$ is determined by Hiss \cite{H3}. Inspection in \cite{H3} shows that $\be$ either liftable or $ \ell|(3^{2m+1}+1)$ and
$\beta_1=\StC^\circ-1_G-\xi^\circ_5-\xi^\circ_7$,  or $ \ell|(3^{2m+1}-3^{m+1}+1)$ and
$\beta_4=\StC^\circ-1_G$, in notation of \cite{H3,W}.

It suffices to show that the restriction $\beta_i|_C-\rho^{\reg}_{C}$ is either 0 or a proper character of $C$
for every maximal cyclic $p$-subgroup $C$ of $G$. There are 4 maximal tori of $G$; these are cyclic groups of order  $|T_1|=q-1$, $|T_2|=q+1$, $|T_3|=q+\sqrt{3q}+1$ and $|T_4|=q-\sqrt{3q}+1$. We write $C=C_i$ if $|C|$
divides $|T_i|$.

Inspection of the character table of $G$ in \cite{W}
shows that every character $\xi_j$ $(j=2,5,6,7,8)$ as well as $\StC$
is constant at $C\smallsetminus \{1\}$. Therefore, the result follows
 by applying Lemma \ref{tr1} to the  values of these characters (given in \cite{W}).

 For reader's convenience in the \f table
 we give the values of the characters involved at $1\neq t\in C_i$ for $i=1,2,3,4.$

 \med
\begin{center}
\begin{tabular}{|c|c|c|c|c|c|c|c|c|c|c|c|}\hline
&$\StC$&$\xi_2$&$\xi_4$ &$\xi_5$ &$\xi_6$ &$\xi_7$&$\xi_8$&$\be_1$&$\be_2$&$\be_3$&$\be_4$\cr
\hline
$C_1$& 1& 1&1&0& 0&0&0&0&0&0&0\cr \hline
$C_2$&$- 1$&$3$ &$-3$&1 &$-1$ &1&$-1$&$-4$&2&$-4$&$-2$\cr \hline
$C_3$& $- 1$& 0&0&$-1$ &0 &$-1$&0&0&$-1$&0&$-2$\cr \hline
$C_4$& $- 1$&0&0 &$0$ &1 &0&1&$-2$&$-1$&$-2$&$-2$\cr
\hline
\end{tabular}
\end{center}

\med
 Note that $\xi_4(1)=q^2(q^4-q^2+1)$,  $\xi_2(1)=q^2-q+1$
and $\xi_6(1)=\xi_8(1)=(q^2-1)q(q^2+1-q\sqrt{3})/2\sqrt{3}$. So
 $\beta_2(1)=\xi_4(1)-\xi_2(1)-\xi_6(1)-\xi_8(1)=((q^6-q^2-q+1)+q(q^2-1)(q^2+1-q\sqrt(3))/\sqrt{3})$.

\section{The case of $G_2(q)$}

In this section we prove Theorem \ref{mth1} for $G=G_2(q)$, $q>2$, and $\ell\neq 2$. The case of $G=G_2(2)$
is considered in Lemma \ref{gg2} at the end of this section. (Also see Lemma \ref{2g24}  for group $2\cdot G_2(4)$.)

\begin{theo}\label{g2o}
Theorem {\rm \ref{mth1}} is true for $G=G_2(q)$, $q=r^a>2$.
 \end{theo}

\bl{mt1}
Let  $G=G_2(q)$ and $q \geq 3$. Then every semisimple element of G is contained in a subgroup isomorphic to $\SL_3(q)$ or $\SU_3(q)$.\el

\bp Every semisimple element is contained in a maximal torus of $G$.
Also, $G$ has 6 conjugacy classes of maximal tori, whose orders are $q^2-q+1$, $q^2+q+1$, $(q-1)^2$, $(q+1)^2$ and two classes of order $q^2-1$. In addition,  $G$ contains subgroups $L^+ \cong \SL_3(q)$ and $L^- \cong \SU_3(q)$, whose maximal tori are maximal in $G$.
If $3|q$, then the statement follows from fusion of conjugacy classes of certain subgroups $L^+\cong \SL_3(q)$ and
$L^-\cong \SU_3(q)$ in $G$ as given in \cite{En1}: every semisimple class of $G$ intersects $L^+$ or $L^-$.

Assume now that $3 \nmid q$ and choose $\ep\in\{+,-\}$ such that $3|(q-\ep)$.
It suffices to show that maximal tori $T^+ < L^+$ and $T^- < L^-$ (both cyclic groups of order $q^2-1$)
are not conjugate in $G$. Assume the contrary: $T^+$ and $T^-$ are conjugate in $G$. It is shown in \cite{CR} and \cite{EY2} that $G$ has two conjugacy classes of elements of order $3$, with representatives  $u,v$, where $C_G(u) \cong \SL^\ep_3(q)=L^\ep$, and $C_G(v) \cong \GL^\ep_2(q)$. Then $T^\ep$ contains
 $Z(L^\ep)$, and so   $u \in T^\ep$. As $T^{-\ep}$ is conjugate to $T^\ep$,
we may assume that $u \in T^{-\ep} < L^{-\ep}$.

Consider the case with $\ep=-$ and $\chi \in \Irr(G)$ of degree $q^3-1$. Then $\chi(u) =-q(q-1)$, see \cite{CR} and \cite{EY2}. On the other hand, if $\theta \in \Irr \SL_3(q)$ and $\theta(1) \leq q^3-1$, then $\theta(g) \geq 0$, or $\theta(1)=q^3-1$ and $\theta(g)=-2$
for any element $g$ of order $3$. Restricting $\chi$ to $L^+$, we arrive at a contradiction.

Now assume that $\ep=+$, and take $\chi \in \Irr(G)$ of degree $q^3+1$.  Then $\chi(u) =q(q+1)$, see \cite{CR} and \cite{EY2}. We use the notation of \cite{Ge}, and decompose
$$\chi|_{L^-} = a\chi_1+b\chi_{q^2-q}+c\chi_{q^3}+\sum^{q}_{i=1}(d_i\chi^{(i)}_{q^2-q+1}+e_i\chi^{(i)}_{q(q^2-q+1)})+
    \sum_{i,j}f_{ij}\chi^{(i,j)}_{(q-1)(q^2-q+1)}+\sum_jg_j\chi^{(j)}_{q^3+1},$$
where $a,b,c,...$ are non-negative integers. First, if some $g_j \geq 1$, then $\chi|_{L^-} =  \chi^{(j)}_{q^3+1}$, and hence $|\chi(u)| \leq 2$, a contradiction.
So $g_j=0$ for all $j$. Likewise, if $c \geq 1$, then $c=1$, $\chi|_{L^-} =  \chi_1+\chi_{q^3}$, yielding $\chi(u)=2$, again a contradiction.
Now, evaluating at an element of order $q^2-q+1$, we get $0=a-b$, i.e. $b=a$. Comparing the degrees, we obtain
$$q^3+1=(q^2-q+1)(a+\sum_id_i+\sum_iqe_i),$$
whence $a+\sum_id_i +\sum_iqe_i \leq q+1$. Now, evaluating at $u$, we get
$$q(q+1) = \chi(u) = 2a+\sum_i d_i + \sum_ie_i \leq q+1+a\leq 2(q+1),$$ again a contradiction.
\enp

If $r\neq 3$ then  the subgroups of $G$ isomorphic to $H_i$ for $i=1$ or $2$ are conjugate, as follows from the classification of maximal subgroups of $G$ obtained in \cite{Coo} and \cite{Kl2}.
The case with $r=3$ has features which force us to consider this separately (Lemma \ref{3g2}).

 \begin{lemma}\label{G2}
{\rm \cite[Lemma 4.10]{Z4}}  Let $p,q>2$, $p \nmid q$ and let  $g\in G\cong  G_2(q)$  be  a $p$-element  contained in a proper parabolic subgroup of $G$. Let
$1_G\neq \theta     \in  \Irr _\ell G$.  Then  $\deg\theta     (g)=|g|$.  \end{lemma}

\begin{remar} {\em In \cite[Lemma 4.10]{ Z4}  it is assumed that  $\ell\neq p$, however, this  is nowhere used in
the prove of \cite[Lemma 4.10]{Z4}; so the claim is true for $\ell=p$ as well.}
\end{remar}

\begin{lemma}\label{g2t}
Let $G=G_2(q)$, $q>2$, and let $\theta     \in\Irr _FG$    with $\dim\theta   >1$. Let $g\in G$ be a
semisimple $p$-element. Suppose that $g\in H= \SL_3(q) $.   Then  $\deg\theta    (g)=|g|$. \end{lemma}

\begin{proof}
Suppose the contrary. Let  $\phi$ be a non-trivial \ir constituent of $\theta|_H$.
If $g$  is contained in a parabolic subgroup of $H$,
then the result follows from Corollary \ref{hh2}, unless $ g\in Z(H)$, so $|g|=3$. Then
then it normalizes a nontrivial $r$-subgroup, whence
$g$ lies  in a parabolic subgroup of $G$, and the result follows from Lemma \ref{G2}.
So we assume that $g$ is not in a parabolic subgroup of $H$. Then $p>3$ and, by Lemma  \ref{ss3},  $|g|=q^2+q+1$. Then Sylow $p$-subgroups of $G$ are cyclic. If $\ell=p$ or $\ell=0$ then the result in this case follows from \cite{Z2}. Suppose that $\ell \neq 0,p$.
Then, again by Lemma \ref{ss3}, $\deg\phi(g)=\dim\phi=q^2+q$,  and 1 is not an \ei of $\phi(g)$.
If $1_H$ is a constituent of $\theta|_H$ then $\deg\theta(g)=|g|$. Otherwise, $\dim\theta$ is a multiple of
 $q^2+q$. Then  $\theta$  is liftable (this  follows by inspection of Brauer character degrees of $G$ available in \cite{Sh,Sh2} for $\ell|(q^2+q+1)$), and we are back to the complex case.
 \end{proof}

In view of Lemma \ref{g2t} to prove Theorem \ref{g2o} we can assume that $g\in H\cong \SU_3(q)$.
Moreover, by \cite[Lemma 6.1]{TZ8}, if $p>2$ then either $|g|=q+1$ or $|g|=q^2-q+1$.  In the former case $q$ is even. Let $p=2$. Then, by Lemma \ref{cv8}
and Proposition \ref{pr5}, either $|g|=q+1$ or $2(q+1)$.




 For $H=\SU_3(q)$ the arguments in the proof of Lemma \ref{g2t} do not work. Indeed, if  $p|(q+1)$ then we cannot use \cite{Z2} for  $p=\ell$ as Sylow $p$-subgroups of $G$ are not cyclic. So we turn to another method. We show that the restriction
 to  $H$ of every non-trivial \irr of $G$ contains a non-trivial \ir constituent which is not Weil.
 This will imply Theorem \ref{g2o} in view of Proposition \ref{pr5} for $p=2$ and \cite[Lemma 6.1]{TZ8}.
   In addition, to handle the case of ${}^3D_4(q)$ we need a similar result for $H=\SL_3(q)$ and $\ell=q^2+q+1$. In fact, for our use it suffices to consider the case where $Z(H)=1$.
  To deal with $\ell|(q^4+q^2+1)$ we first prove the result for $\ell=0$ and next use the decomposition numbers to deal with $\ell$-Brauer characters. In our reasoning below a certain role is played by the
Gelfand-Graev \rep of $H$. This is the induced \rep $\lam ^G$, where $\lam$ is a so-called non-degenerate
linear character of the maximal unipotent subgroup of $G$.

  \bl{r0r}
  Let $G=G_2(q)$, $q>2$, and $H\cong \SL_3^\ep(q)< G$, $(3,q-\ep)=1$. Let $1_G\neq \tau\in\Irr(G)$ (so $\ell=0$ here). Then $\tau|_H$ contains a non-trivial \ir constituent which is not Weil.\el

 \begin{proof} Let $U$ be a maximal unipotent subgroup of $H$ and let $\Gamma=\Gamma^\ep$ be the Gelfand-Graev character of $H$. Recall that $Z(H)=1$ implies that the   Gelfand-Graev \rep of $H$ is unique (this follows from \cite[14.28 and 14.29]{DM}).   It is well known that neither
  $1_H$ nor Weil are constituents of $\Gamma$. (Indeed, if $r|q$ is a prime  then the degrees of any \ir constituent
  of $\Gamma$  is $|C_H(s)|_{r}\cdot (|H|/|C_H(s)|)_{r'}$ for some semisimple element $s\in H$, see \cite[Theorem 8.4.9]{C}. One observes that 1, $q^2-\ep q$, $q^2-\ep q+1$ are not of these form.) Therefore, it suffices to show that
 \begin{equation}\label{eq1}
  (\tau|_H, \Gamma)>0
\end{equation}
 for every non-trivial \ir character $\tau$ of $G$.
  As the character table of $G$ is known, this can be easily checked. Indeed, observe that $\Gamma(x)=0$
  if $x\in H$ is not unipotent. In addition,
  $\Gamma(1)=|H|/|U|$ and $\Gamma(u)=1$ if $u\in H$ is regular unipotent \cite[14.45]{DM}. To compute $\Gamma(t)$ for $t\in H$ a transvection, one can use the inner product $(\Gamma,1_H)=0$. The conjugacy classes $u^H$, $t^H$ of $u$, $t$, are of size $|H|/q^2=q|H|/|U|$ and $|H|/|C_H(t)|=|H|/|U|(q-\ep)$, respectively. So we have
 $$\Gamma(t)\cdot|C_H(t)=|\Gamma(t)|H|/|U|(q-\ep)=\Gamma(u)q|H|/|U| -|H|/|U|,$$
 whence $\Gamma(t)/(q-\ep)=q-1$ and $\Gamma(t)=(q-\ep)(q-1)$. Therefore,
 $$|H|(\tau|_H, \Gamma)=\tau(1)\frac{|H|}{|U|}+\tau(t)\Gamma(t)|C_H(t)|+\tau(u)|C_H(u)|=\frac{|H|}{|U|} \bigl(\tau(1)+\tau(t)(q-1)+q\tau(u)\bigr),$$
 so $(\tau|_H, \Gamma)>0$ \ii $\tau(1)+\tau(t)(q-1)+q\tau(u)>0$. Inspection of the character table of $G$
 in \cite{CR} (for $(q,6)=1$), \cite{En1} (for $3|q$)  and \cite{EY2} (for $2|q$) yields the result. \enp

\bl{s32}
Let $H=\SL^\ep_3(q)$ 
and let $ \tau\in\Irr H$ with $\tau(1)>q^2+\ep q+1$. Then 
$\tau^\circ$ has an \ir constituent of degree greater than $q^2+\ep q+1$.
\el

  \bp 
  The result can be easily deduced from the comments in
  \cite[Examples 3.1 and 3.2]{TZ8}. \enp

\bl{r5r}
Let $G,H$ be as in Lemma {\rm \ref{r0r}}.
Let $1_G\neq \phi\in\Irr _\ell G$. Suppose that 
$\phi+m\cdot 1_G$ is liftable for some integer $m\geq 0$. Then $\phi|_H$ contains a non-trivial \ir constituent which is not Weil.\el

  \bp Let $\tau\in\Irr(G)$ be a character of $G$ such that   $\tau^\circ=\phi+m\cdot 1_G$.
 Then the non-trivial \ir Brauer characters of $H$ occurring in $\phi|_H$
 are the same as those in $(\tau|_H)^\circ$. By Lemma \ref{r0r}, there is an \ir constituent $\nu$
 in $\tau|_H$ such that $\nu(1)>q^2+\ep q+1$. By Lemma \ref{s32}, $\nu^\circ$ has an irreducible constituent of degree greater than $q^2+\ep q+1$. Whence the lemma. \enp

 \bl{r8r}
 Let $G,H$ be as in Lemma {\rm \ref{r0r}} and $1_G\neq \phi\in\Irr _\ell G$, where  $\ell|(q^4+q^2+1)$. Then $\phi|_H$ contains a non-trivial \ir constituent which is not Weil.\el

 \bp   (i) Recall that $\Gamma=\lam^H$ for the Gelfand-Graev character $\Gamma$ of $H$, where $\lam$ is a linear character of $U$.
 Therefore,  $\Gamma^\circ=(\lam^\circ)^H$, so $\Gamma^\circ$ is the Brauer character of a projective
 $FH$-module. In particular, if $\alpha$ is any class function on $H$, then the inner product $(\alpha^\circ,\Gamma^\circ)'$ over $\ell'$-elements of $H$ is equal to the usual inner product $(\alpha,\Gamma)$ over all elements in $H$. Also, as $\Gamma$ is \mult free,
 it follows that $\Gamma^\circ$ is a sum of Brauer characters of indecomposable projective modules $P_{\psi_i}$ of $H$, each of them occurs with \mult 1. Therefore, $(\phi|_H,\Gamma^\circ)'$   equals the sum of multiplicities of the \ir constituents $\nu$ of $\phi|_H$ such that $(\nu,\Gamma^\circ)'>0$.

 Note that $(1_H,\Gamma^\circ)'=(1_H,\Gamma)=0$ and hence $\nu\neq 1_H^\circ$. Let $\mu$ be a  \ir Weil character of $H$;
 in particular, $(\mu,\Gamma^\circ)' = (\mu,\Gamma) = 0$ as mentioned above. It is well known that
$\mu^\circ=\mu'+a\cdot 1_H$, for some $\mu' \in\Irr_\ell H$ and $a \geq 0$. Therefore,
$(\mu',\Gamma^\circ)'=(\mu,\Gamma^\circ)' = 0$, so $\mu'\neq \nu$. \itf $\phi|_H$
 has an \ir constituent that is neither trivial nor  Weil \ii $(\phi|_H,\Gamma^\circ)'>0$.

\smallskip
(ii) We will now prove that $(\phi|_H,\Gamma^\circ)' > 0$.
Denote by $\Irr^*_\ell G$ the set of characters $\phi\in\Irr _\ell G$ that are not of the form $\tau^\circ-a\cdot 1_G$ for some $\tau\in\Irr(G)$ and $a\geq 0$.
 Suppose first that  $\phi\notin \Irr^*_\ell G$, so that $\phi=\tau^\circ-a \cdot 1_G$ for some $\tau \in \Irr(G)$ and $a \geq 0$.
 Then $(\phi,\Gamma^\circ)'=((\tau|_H)^\circ,\Gamma^\circ)' = (\tau|_H,\Gamma)$, and \eqref{eq1} implies that $(\tau|_H,\Gamma)>0$.

Suppose that $\phi\in \Irr^*_\ell G$. As in Lemma \ref{r0r}, we observe that $(\phi,\Gamma^\circ)'>0$ \ii
\begin{equation}\label{eq2}
  \phi(1)+(q-1)\phi(t)+ q\phi(u)>0.
\end{equation}

To verify the inequality \eqref{eq2} we need the character values of $\phi$
 at  $u,t$, where $t\in H$ is a transvection and $u$ is a regular unipotent element of $H$.
 These can be computed from the decomposition numbers of $G$ modulo $\ell$, which are available in Shamash \cite{Sh,Sh2} for $3\neq \ell|(q^2+\ep q+1)$    and \cite{H3} for  $\ell|(q^2-1)$. %

 Suppose first that $3\neq \ell|(q^2+\ep q+1)$. 

  In notation of
 \cite{H3} the characters in $\Irr^*_\ell G$ that are in the principal block are $(X_{3}-X_{18})^\circ$ if $\ep=1$, and $ (X_{12}-X_{16}+1_G)^\circ$ if $\ep=-1$. 
From this one easily checks \eqref{eq2}.

 Suppose that $\phi$ is not in the principal block. Then either  $\phi$ is  liftable or
 $\ell|(q^2-\ep q+1)$ and $3|(q-\ep)$, in each case there is a single non-liftable character.
 If $\ep=1$ then the non-liftable character are
 $(X_{33}-X_{32})^\circ$ and 
 if $\ep=-1$ then this is $(X_{31}-X_{32})^\circ$, see \cite[p.190]{H3}. As above, one easily checks
 the above inequality \eqref{eq2}.

 For $(q,6)\neq1$ and  $\ell|(q^2+\ep q+1)$ the decomposition numbers are determined by Shamash \cite{Sh2}.
 The decomposition numbers for the characters in the principal block and, for $2|q$, in the other blocks containing non-liftable characters are the same as for $q$ with $(q,6)=1$. If $3|q$ then
 all non-liftable characters are in the principal block. The character tables of $G_2(q)$ with
 $(q,6)>0$ can be found in \cite{En1} for $3|q$  and \cite{EY2} for $2|q$. Inspection of
 \cite{EY2}  shows that  $X_i(1)$ and $X_i(t)$ for non-liftable characters $X_i$  are the same polynomials in $q$ as for $q$ with $(q,6)=1$. In addition, the absolute value of $X_i(u)$ is small enough
 to satisfy the inequality \eqref{eq2}.
\enp

We are left with the cases where $\ell|(q^2-1)$. As the character table of $G_2(q)$ with $3|q$ differs from
that for $q$ with $(q,3)=1$, we consider this case separately.

\bl{3g2}
Theorem {\rm \ref{mth1}} is true for $q=3^m$.\el

\bp By Lemma \ref{g2t} and the comments following it, we may assume that $p=2$ and $q+1$ is a 2-power.
We assume $|g|>2$ as the case $|g|=2$ is trivial.
Observe that  $|q+1|_2= 4$ if $m$ is odd and $|q+1|_2= 2$ otherwise.
Therefore $q=3$.
If  $\ell\neq 2$ then the result follows by the Brauer character table of $G_2(3)$ \cite{JLPW}.

Let $\ell=2$. Let $H\cong \SU_3(3)< G$.  We show that $\phi|_H$ has a non-trivial \ir constituent which is not Weil. As above, it suffices to check that $(\phi,\Gamma)>0$, where $\Gamma$ is the Gelfand-Graev character of $H$. This holds if $\phi(1)+(q-1)\phi(t)+q\phi(u)>0$. This can be easily checked using the Brauer character table of $G$ for $\ell=2$ in \cite{JLPW}.
\enp


From now on, until the end of this subsection we assume $r\neq 3$. To complete the proof of Theorem \ref{g2o}
it suffices to show that $\phi|_H$ has a non-trivial constituent which is not Weil (here $\phi\in\Irr_\ell G,$ $3\neq \ell|(q^2-1)$). We can do this by the method used in the proof of Lemma \ref{r8r}, however, there is a more conceptual approach to do this.

\bl{a12}
Let $G=G_2(q)$, $q>2$, $3 \nmid q$, and let $P$ be a long-root parabolic subgroup of $G$.
Let $U$ be the unipotent radical of $P$,
 $V$ be a non-trivial \ir $FG$-module and $V_0=C_V(Z(U))$.
Then $U/Z(U)$ acts on $V_0$  faithfully. \end{lemma}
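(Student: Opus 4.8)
The plan is to read off the precise structure of the long-root parabolic $P$ of $G=G_2(q)$ and then combine irreducibility of $U/Z(U)$ as a Levi-module with a propagation argument over long-root subgroups. Relative to simple roots $\al_1$ (short), $\al_2$ (long), the positive roots are $\al_1,\al_2,\al_1+\al_2,2\al_1+\al_2,3\al_1+\al_2$ and the highest (long) root $\theta:=3\al_1+2\al_2$. For the long-root parabolic one has $U=\langle X_\gamma:\gamma\in\{\al_2,\al_1+\al_2,2\al_1+\al_2,3\al_1+\al_2,\theta\}\rangle$, with $Z:=Z(U)=X_\theta\cong\FF_q$, so that $\bar U:=U/Z$ is elementary abelian of order $q^4$ with the images of $X_{\al_2},X_{\al_1+\al_2},X_{2\al_1+\al_2},X_{3\al_1+\al_2}$ as an $\FF_q$-basis; the commutator map induces a nondegenerate symplectic pairing $\bar U\times\bar U\to Z$ (pairing $\al_2$ with $3\al_1+\al_2$ and $\al_1+\al_2$ with $2\al_1+\al_2$), which in particular confirms $Z$ is exactly the highest root group. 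The Levi $L$ has derived subgroup the short-root $\SL_2(q)=\langle X_{\pm\al_1}\rangle$, and the four listed root groups form a single $\al_1$-string of length $4$; hence $\bar U\cong\mathrm{Sym}^3$ of the natural $\SL_2$-module, which is an irreducible $\FF_q[L]$-module precisely because $3\nmid q$ (and $q>2$, with $q=2$ handled separately).

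Since $Z$ is central in $U$, the space $V_0=C_V(Z)$ is $U$-invariant and $Z$ acts trivially on it, so $\bar U$ acts on $V_0$; I would write $\bar K\le\bar U$ for the kernel and show $\bar K=0$. First I would note that $\bar K$ is $L$-invariant: $L$ normalizes $U$ and $Z$, hence preserves $V_0$, and the kernel of an $L$-equivariant action is $L$-stable. Working over an algebraically closed $F$ with $\ell\neq r$ (the case needed here, since in the application $\ell\mid q^2-1$; then $F\bar U$ is semisimple), I would decompose $V_0$ into $\bar U$-character eigenspaces. The torus $T$ acts on the four root subspaces through the distinct characters $\al_2,\dots,3\al_1+\al_2$; checking that these characters and their Frobenius twists are pairwise distinct on $T(\FF_q)$ (valid once $q\ge 5$, the case $q=4$ done by hand) shows every $T$-invariant $\FF_p$-subspace of $\bar U$ is a sum of root subspaces. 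Combined with $L$-invariance and the $\SL_2(q)$-irreducibility of $\bar U$, this forces $\bar K\in\{0,\bar U\}$.

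It then remains to exclude $\bar K=\bar U$, i.e.\ to show $U$ does not act trivially on $V_0$; this is the crux. Note that $V_0\neq 0$ in the situation where the lemma is used (otherwise faithfulness fails outright), and I would record this as an input. Suppose $U$ acts trivially on $V_0$, so $V_0=C_V(U)=:W\neq 0$. The hypothesis is invariant under $G$-conjugation and all long-root subgroups of $G$ are conjugate, so $\dim C_V(Y)$ is the same for every long-root subgroup $Y$. The non-central long-root subgroups $X_{\al_2}$ and $X_{3\al_1+\al_2}$ lie in $U$ and hence fix $W$ pointwise, giving $W=C_V(X_\theta)\subseteq C_V(X_{\al_2})$ and $W\subseteq C_V(X_{3\al_1+\al_2})$; equality of dimensions upgrades these to $C_V(X_{\al_2})=C_V(X_{3\al_1+\al_2})=W$. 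Applying the identical reasoning to the long-root parabolics attached to $X_{\al_2}$ and $X_{3\al_1+\al_2}$ (again conjugates of $P$) propagates $C_V(Y)=W$ to further long-root subgroups; iterating reaches every long-root subgroup. As these generate $G$, one obtains $W\subseteq C_V(G)=0$, contradicting $W\neq 0$. Hence $\bar K=0$ and $\bar U=U/Z(U)$ acts faithfully on $V_0$.

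The two steps that require genuine care are the reduction $\bar K\in\{0,\bar U\}$ — where one must verify that no two of the four weight characters become $\FF_p$-proportional after a Frobenius twist (this is exactly where $3\nmid q$ and $q>2$ enter, with the finitely many small $q$ checked directly) — and the propagation step. I expect the propagation to be the main obstacle: one must confirm that the domination relation $C_V(X_\theta)\subseteq C_V(Y)$ for long-root $Y\le U$, iterated over all $G$-conjugate long-root parabolics, eventually involves every long-root subgroup. I would establish this using transitivity of $G$ on long-root subgroups together with the explicit list of long roots contained in the radical of each long-root parabolic, so that the resulting ``graph'' on long-root subgroups is connected.
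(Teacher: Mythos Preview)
Your overall structure matches the paper's first step: both reduce, via the fact that $\bar U=U/Z(U)$ has no proper nontrivial $P$-invariant subgroup, to the case $\bar K=\bar U$, i.e.\ $V_0=C_V(U)$. (The paper simply cites this irreducibility from \cite{ABS}, \cite{MT}, \cite{Ko}; your torus/Frobenius-twist argument is considerably more delicate and would need the promised case checks carried out, but that is not the main issue.)

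The genuine gap is at the end. Your propagation over long-root subgroups correctly shows: \emph{if $U$ acts trivially on $V_0$, then $V_0=0$}. But the lemma asserts that $\bar U$ acts \emph{faithfully} on $V_0$, and a nontrivial group cannot act faithfully on the zero space. So you must also rule out $V_0=0$, and you explicitly decline to do so, calling it ``an input''. It is not: in the application (Lemma~\ref{ti1}) there is no independent verification that $C_V(Z(U))\neq 0$; the lemma is invoked precisely to deliver the contradiction, and that fails if $V_0=0$ is still on the table.

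The paper closes both issues in one stroke with a Brauer-character comparison. Assuming $V_0=C_V(U)$, write $V=V_0\oplus[V,Z(U)]$. Since $[U,y]=Z(U)$ for every $y\in U\smallsetminus Z(U)$, the character of each irreducible $U$-summand of $[V,Z(U)]$ vanishes off $Z(U)$. Take a long-root element $x\in Z(U)\smallsetminus\{1\}$ and a $G$-conjugate $y\in U\smallsetminus Z(U)$ (possible because $U$ contains a second long-root subgroup, namely $X_{3\alpha_1+\alpha_2}$). Then $\chi_V(y)=\dim V_0$ while $\chi_V(x)=\dim V_0-\dim[V,Z(U)]/(q-1)$; since $V$ is nontrivial irreducible and $G$ simple, $[V,Z(U)]\neq 0$, contradicting $\chi_V(x)=\chi_V(y)$. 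This argument never assumes $V_0\neq 0$; indeed it shows $V_0\neq 0$ as a by-product.

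Your propagation argument is elegant and would give a valid alternative once $V_0\neq 0$ is established, but proving that seems to require exactly the character comparison the paper uses---so the paper's route is both shorter and self-contained.
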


\begin{proof} Assume the contrary: $U/Z(U)$ acts non-faithfully on $V_0$. It is well known that for $q>2$, $(q,3)=1$ the group $U/Z(U)$ has no non-trivial proper  $P$-invariant subgroup.
(See \cite{ABS} or \cite[Theorem 17.6]{MT} for  $(q,6)=1$ and \cite{Ko} for $q$ even; note that this fails for the excluded case $q=2$). As $V_0$ is a $FP$-module, it follows that $U$ acts trivially on $V_0$, i.e. $V_0=C_V(U)$ and thus $V = [V,Z(U)] \oplus V_0$.

It is also well known that $U$ is generated by the root subgroups $U_{\beta}$, $U_{\al+\beta}$, $U_{2\al+\beta}$, $U_{3\al+\beta}$, $U_{3\al+2\beta}$ and $[U,U]=U_{3\al+2\beta}=[U_{\beta},U_{3\al+\beta}]= Z(U)$.
Moreover, for every $1\neq y\in (U\setminus Z(U))$ we have $[U,y]= Z(U)$. It follows that the character of every \irr of $U$ non-trivial on $Z(U)$ vanishes on $y$.

Note that there exists some $y\in  (U\setminus Z(U))$ so that the $G$-conjugacy class of $y$ meets $Z(U)$, say at a (long-root) element $x$; indeed, both $U_{3\al+\beta}$, $U_{3\al+2\beta}$ are long root subgroups of $G$, and hence $G$-conjugate. Let $\varphi_0$ and $\varphi_1$ denote the Brauer characters of $V_0$ and $[V,Z(U)]$. Then $\varphi_0(x)=\varphi_0(y) = \dim V_0$,
and  $\varphi_1(y) = 0$. On the other hand, all elements of $Z(U)$ are conjugate in $G$, so all non-trivial \ir \reps of $Z(U)$ occur in $V_2|_{Z(U)}$ with the same multiplicity, whence
$\varphi_1(x)=-\varphi_1(1)/(q-1) < 0$. Thus $(\varphi_0+\varphi_1)(x) \neq (\varphi_0+\varphi_1)(y)$, a contradiction.\end{proof}

\begin{lemma}\label{ti1}
Let  $G = G_2(q)$, $q>2$, $3 \nmid q$, and let $V$ be a non-trivial irreducible $F G$-module.
Let $H \cong \SU_3(q)< G$. Then the restriction of $V$ to $H$
 has a non-trivial composition factor which is not an irreducible Weil module.\end{lemma}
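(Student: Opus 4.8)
Throughout I work in the case that matters for the application: by Lemmas \ref{r5r} and \ref{r8r} the conclusion is already known when $\phi$ is (a twist of) a liftable character or $\ell\mid(q^4+q^2+1)$, so I may assume $3\neq\ell\mid(q^2-1)$. The plan is to argue by contradiction, using Lemma \ref{a12} as the engine and exploiting that $Z(U)$ is simultaneously the centre of a maximal unipotent subgroup of $H$.

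\textit{Geometric setup.} First I would realise $H\cong\SU_3(q)$ as the subsystem subgroup attached to the $A_2$-system of long roots of $G_2$, whose positive roots are $\beta$, $3\alpha+\beta$, $3\alpha+2\beta$. These generate a maximal unipotent subgroup $U_H\le H$ which equals $U\cap H$, with $Z(U_H)=U_{3\alpha+2\beta}=Z(U)=:Z$. Thus $U_H/Z$ embeds into $U/Z$, where $U/Z$ is the $4$-dimensional irreducible module for the short-root Levi $\SL_2(q)$ (isomorphic to $\mathrm{Sym}^3$ of the natural module), and $U_H/Z$ sits inside it as the span of the highest and lowest weight lines $\overline{U_\beta}$, $\overline{U_{3\alpha+\beta}}$. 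Put $V_0=C_V(Z)$. By Lemma \ref{a12} the group $U/Z$ acts faithfully on $V_0$; in particular its nontrivial subgroup $U_H/Z$ acts nontrivially, so $C_V(U_H)=C_{V_0}(U_H/Z)\subsetneq V_0=C_V(Z)$. Note also that $U_\beta$ and $Z$ are both long-root subgroups of $G$, hence $G$-conjugate, so $\dim C_V(U_\beta)=\dim C_V(Z)$ for every $V$; this rigidity is what lets the faithful $U/Z$-action be read off along a single long-root direction.

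\textit{Reduction to a defect count.} Since $Z$ and $U_H$ are $r$-groups and $\ell\neq r$, restriction to them is semisimple, so the functors $C_{(-)}(Z)$ and $C_{(-)}(U_H)$ are exact and $\dim C_V(Z)$, $\dim C_V(U_H)$ are additive over the composition factors of $V|_H$. Assume, for contradiction, that every nontrivial composition factor $S$ of $V|_H$ is an irreducible Weil module. Then I would compare the \emph{defect} $\dim C_V(Z)-\dim C_V(U_H)$, which is strictly positive by the previous paragraph, with the sum over composition factors of the local defects $\dim C_S(Z)-\dim C_S(U_H)$: the trivial factors contribute $0$, and for the Weil factors the relevant fixed-space dimensions are computed from the values of the Weil characters at a transvection and at regular unipotent elements — exactly the data assembled in Table~1 (via Geck's tables), or obtained uniformly from the total Weil character of $\GU_3(q)$ and additivity. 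The target is to show that the Weil and trivial constituents cannot, on $V_0$, account for the faithful action of $U/Z$ forced by Lemma \ref{a12}, and hence that some composition factor is neither trivial nor Weil.

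\textit{Main obstacle and fallback.} The crux — and the step I expect to be hardest — is precisely this quantitative comparison. A naive defect count is \emph{not} sufficient, because an individual Weil module already moves its own $Z$-fixed space (so $C_S(U_H)\subsetneq C_S(Z)$ can occur for Weil $S$); what is needed is the finer, uniform fact that the $U_H/Z$-action on the small spaces $C_S(Z)$ for Weil $S$ is confined, across all $O(q)$ Weil modules of $\SU_3(q)$, to a proper subspace of $\widehat{U_H/Z}$ — e.g. that a fixed long-root line acts trivially — whereas faithfulness of $U/Z$ (together with the conjugacy $U_\beta\sim Z$) forbids this. Establishing this uniform control over the Weil family is the delicate point. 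Should it prove awkward, the very same conclusion can instead be reached by the method of Lemma \ref{r8r}: one shows $(\phi|_H,\Gamma^\circ)'>0$ for the Gelfand--Graev character $\Gamma$ of $H$, which automatically isolates a nontrivial non-Weil constituent; Lemma \ref{a12} is what makes the present, more conceptual route available.
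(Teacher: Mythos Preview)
Your strategy is exactly the paper's: set up $Z(U_H)=Z(U)$, invoke Lemma~\ref{a12} to get $U/Z(U)$ faithful on $V_0=C_V(Z(U))$, and derive a contradiction from the assumption that all composition factors of $V|_H$ are trivial or Weil. The gap is that you stop short of the one structural fact that closes the argument cleanly, and instead declare it an obstacle. The paper quotes \cite[Lemma 11.1]{GMST}: for every irreducible Weil module $L$ of $\SU_3(q)$, the restriction $L|_R$ (with $R$ a maximal unipotent subgroup of $H$) contains \emph{no nontrivial linear character} of $R$. Since the linear constituents of $L|_R$ are precisely the characters of $R/Z(R)$ occurring on $C_L(Z(R))$, this says exactly that $R/Z(R)$ acts trivially on $C_L(Z(R))$, i.e.\ $C_L(R)=C_L(Z(R))$. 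Hence, if every composition factor of $V|_H$ is trivial or Weil, then (by semisimplicity of $V|_R$) $R$ acts trivially on $C_V(Z(U))$; as $Z(U)<R<U$, this contradicts Lemma~\ref{a12}. So your ``main obstacle'' is not an obstacle at all---the uniform control over the Weil family you ask for is already a known lemma, and no defect-counting or Gelfand--Graev fallback is needed.

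One further point: you place $U_H$ inside $U$ by an explicit long-root $A_2$ embedding, which is fine for \emph{some} copy of $\SU_3(q)$, but the lemma concerns an arbitrary $H\cong\SU_3(q)$ in $G$. The paper closes this by noting that for $3\nmid q$ all such subgroups are $G$-conjugate, and then (rather than naming root subgroups) uses Kantor's long-root generation result to identify $R'=Z(R)$ with $Z(U)$ up to conjugacy, followed by a short normal-subgroup/order argument inside the Borel $B=N_H(R')=P\cap H$ to force $R\le U$. Your root-subgroup picture is a valid shortcut once the conjugacy is in place, but you should say so.
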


\begin{proof}
Recall that $G$ has a single conjugacy class of subgroups isomorphic to $\SU_3(q)$ if $3 \nmid q$, see \cite{Kl2} and \cite{Coo}.

We use notation of Lemma \ref{a12}. Let $r$ be a prime dividing $q$.
Let $P$ be a parabolic subgroup specified in Lemma  \ref{a12} and $U$ the unipotent radical of $P$.

Let  $R \in {\rm Syl}_r (H)$; then $R'=Z(R)$. Observe first that the elements of $R'$ are conjugate with those in $Z(U)$. This follows from a result of Kantor \cite[p. 377]{Ka}, stated that
$H$ is generated by some subgroups of $G$ conjugate to $Z(U)=U_{3\al+2\beta}(q)$. Indeed, if $r\neq 2$ then
in a 7-dimensional \rep $\rho$, say, of $G$ over $\overline{\FF}_r$
the elements of $Z(U)$ satisfy the equation $(x-\Id)^2=0$; this is the case only for elements of $R'$ in $R$
as $\rho(M)$ is a direct sum of two \reps of degree 3 and the trivial one  \cite{Ka}. If $r=2$ then the elements of $Z(R)$ are involutions, whereas all involutions of $R$ lie in $R'$. \itf a subgroup of $H$ conjugate with $Z(U)$ is conjugate in $H$ to $Z(R)$.

Therefore, we may assume that $R'=Z(U)$. Let $B=N_H(R')$, so that $B$ is a Borel subgroup of $H$, and $|B|=q^3(q^2-1)$. One observes that $B$, acting on $R/R'$ by conjugation, either permutes transitively the non-identity elements of $R/R'$, or has $3$ orbits of the same length $(q^2-1)/3$ (in the latter case $Z(H)\cong C_3$). In particular, the only $B$-invariant subgroups of $R$ that contain $R'$ are $R'$ or $R$.
Now, as $U = O_r(P)$ and $B = P \cap H$, we have that $U \cap B \leq O_r(B) = R$ and $U \cap B \lhd B$. If moreover
$U \cap B = R'$, then $|BU/U| = |B/R'|$ has order $q^2(q^2-1)$, where Sylow $r$-subgroups of $P/U$ have order $q$, a contradiction. Hence $U \cap B = R$.

Suppose the contrary, that every \ir constituent of $V|_H$ is either trivial module or a Weil module.
By \cite[Lemma 11.1]{GMST}, for a Weil module $L$, say, the restriction $L|_R$ contains no
{\it nontrivial}  linear character of $R$. So the same is true for $V|_R$ and thus $R$ acts trivially on $C_V(Z(U))$. However, this contradicts Lemma \ref{a12} as $Z(U)< R< U$. \end{proof}

   \def\elts{elements }

Even though $G=G_2(2)$ is not simple, we still consider it for completeness. Note that it contains a normal subgroup $H\cong \SU_3(3)$ of index 2.

   \bl{gg2}
   Let $G=G_2(2)$ and let $g\in G$ be a  p-element for an odd  prime p.
   Let $\phi\in\Irr_\ell G$ for $\ell>2$. Suppose that $\deg\phi(g)<|g|$. Then $|g|\in \{3,7\}$, $\dim\phi=6$ and $\deg\phi(g)=|g|-1$.\el

\bp Note that $|G|=2^6\cdot 3^3\cdot 7$ so  $p=7$ or 3.
 If $p\neq \ell $ then the result follows by inspection of the Brauer character tables. Let $p=\ell$.

 If $p=7$ then Sylow 7-subgroups of $G$ are cyclic. By \cite[Lemma 3.3(v)]{Z2}, if $\deg\phi(g)<|g|$ then $\dim \phi\leq 6$. As $g\in H\cong\SU_3(3)$, the result follows from the main theorem of \cite{Z2}.

Let  $|g|=3$ and $\ell=3$.  If $\dim\phi=6$ then $\phi|_H$ is a direct sum of two \ir \reps of degree 3, which are Galois conjugate to each other. \itf  the Jordan form of $\phi(g)$
is $\diag(J_2,J_2, 1,1)$. Suppose that  $\dim\phi>6$. By Clifford's theorem, $\phi|_H$ is either \ir or a direct sum of two \ir \reps of equal degrees. \itf there exists a 3-modular \irr $\tau$ of $H$ of
degree $d>3$ such that $\deg \tau(g)=2$. This contradicts a result of \cite[Theorem 1]{PS}.\enp

 \bl{2g24}
 Let $G=2\cdot G_2(4)$, $1_G\neq \phi\in\Irr_\ell G $ for $\ell\neq 2$ and let $g\in G$ be a $p$-element of $G$ for $p>2$. Suppose that $\deg\phi(g)<|g|$. Then $\dim\phi=12$, $|g|\in\{3,7,13\}$ and $\deg\phi(g)=|g|-1$.
\el

\bp If $\ell\neq p $ then the result follows from the Brauer character table of $G$ \cite{JLPW}.
Let $\ell=p$. If  $|g|\in \{7,13\}$ then Sylow   $ p$-subgroups of  $G$  are cyclic and the result is contained in   \cite{Z2}. We are left with $p\in\{3,5\}$ and $|g|=p$. Let $|g|=3$. If $g\in 3B$ then $g$ is contained in a subgroup $X\cong \PSL_2(13)$, so $\deg\phi(g)=3$ for every $1_G\neq\phi\in\Irr_3G$ as this holds for  $X$.
 Suppose that  $g\in 3A$. As $g$ has just two distinct \eis
in an \irr $\tau$ of $G$ of degree 12 over $\CC$,  the minimum \po degree of $g$ in the reduction of this modulo 3 equals 2 as well.

Suppose that $\dim\phi>12.$ Note that $G/Z(G)$ contains a subgroup $Y\cong \SU_3(3)=G_2(2)'$. As the Schur multiplier of $Y$ is trivial, we can assume that $Y$ is a subgroup of $G$. As $|Y|_3=|G|_3=27$, we
observe that $g$ is conjugate to an element of $Y$. We claim that the non-trivial composition factors of $\phi|_Y$ are of dimension 3. Indeed, if $\lam\in\Irr_3(Y)$ is such a factor then $\deg\lam(g)=2$. Then $\lam$
extends to a \rep $\overline{\lam}$ of $\SL_3(\overline{\FF}_3)$. By \cite{PS}, $\overline{\lam}$
is a Frobenius twist of an \irr with highest weight $\om_1$ or $\om_2$, which are the fundamental weights of
the weight system of $\SL_3(\overline{\FF}_3)$. These \reps are of dimension 3, whence the claim.
Note that $Y$ itself has two \ir \rep over   $\overline{\FF}_3$ dual to each other, and we denote their Brauer characters by $\lam_1,\lam_2$.

Let $h\in G$ be of order 7. There is a single conjugacy class of such elements, so $h$ is rational
and hence $\be(h)$ is an integer, where $\be$ is the Brauer character of $\phi$. \itf
$\be|_Y=a(\lam_1+\lam_2)+b\cdot 1_Y$, so  $\be(1)=6a+b$. Therefore, $\be(h)=-a+b$, whence
$a=(\be(1)-\be(h))/7$, $ b=a+\be(h)$.

In view of Theorem \ref{g2o}, $\phi$ is faithful. 
Furthermore, $Y$ has a unique involution $z$, say, and $\lam_1(z)=\lam_2(z)=-1.$ Therefore, $\be(z)=-2a+b$.
We use the Brauer character table of $G$ for $\ell=3$ in \cite[p. 274]{JLPW}.  Note that $z$ is in class $2A$ in $G/Z(G)$, as $\be(z)=-4$ if $\be(1)=12$. By \cite[p. 274]{JLPW}, we have 

 \med
\begin{center}
\begin{tabular}{|c|c|c|c|c|c|c|c|c|c|}\hline
$\be(1)$&$12$&$104$&$ 352$ &$1260$ &$ 1364$ &$1800$&$ 2016$&$3744$&$ 3888$ \cr
\hline
$\be(h)$&$-2$ &$- 1$&2&0& $-1$&1&$0$&$-1$&3 \cr \hline
$\be(z)$&$- 4$&$8$ &$-32$&$-36$ &$-28$ &40&$96$&$32$&$-16$\cr \hline
$\be(1)-\be(h)$& $ 14$& 105&350&$1260$ &1365 &$1799$&2016&3745&$3885$ \cr \hline
$a$& $2$&15&50 &$180$ &195 &257&288&$535$&$555$ \cr
\hline
$b$& $0$&14&52 &$180$ &194 &258&288&$534$&$558$ \cr
\hline
\end{tabular}
\end{center}

\noindent It is obvious that the relation $\be(z)=-2a+b$ holds only for $\be(1)=12$.

\med
Let $|g|=5$. Note that a subgroup $H\cong \SU_3(4)$ contains a Sylow 5-subgroup of $G$.
The \irr of $G$ of degree 12 (in any characteristic) remains \ir under restriction to $H$, and this is a Weil \rep of $H$.
By \cite[Lemma 6.1]{TZ8}, $\deg\phi(g)=4$ if $\phi$ is a Weil \rep of degree 12.

Let $\phi\in\Irr_5(G)$ and $\phi(1)>12.$ Then $\phi(1)\neq 1800,3600,3900$ as these characters are of 5-defect 0. In notation of \cite{JLPW}, we are to inspect the characters $\phi_i$ with $i=22,23,24,25,28,29,30$.

By \cite[Lemma 6.1]{TZ8}, every non-trivial \ir constituent of $\phi|_H$ is of degree 12. So  $\phi|_H=a\cdot \tau+b\cdot 1_G$, where $\tau\in\Irr_5(H)$ with $\tau(1)=12$.
Then $\phi(1)=12a+b$. Let $g_{13}\in H$ be of order 13. Then  $\tau(g_{13})=-1$, so $\phi(g_{13})=b-a$.
In particular, $\phi(g_{13})$ is an integer, and the Brauer character table for $\ell=5$ shows that
$\phi(g_{13})\in\{0,1,-1\}$. Let $g_2$ be of order 2 then $\tau(g_2)=-4$, whence  $\phi(g_2)=-4a+b$.

If $a=b$ so $\phi(g_{13})=0$ and $\phi(1)=13a$, so $13|\phi(1)$, whence $i=23,28,30$. In addition, $\phi(g_2)=-3a$ so $3|\phi(1)$, which is false for $i=23,28,30$.

Let $b=a+1$. Then $\phi(g_{13})=1$, 
whence $i=22,24,29$. As $\phi(1)=13a+1$, we have $a=7,43,167$, resp. In addition, $\phi(g_2)=-3a+1=-20,-128,500$, which is false.

Let $b=a-1$. Then $\phi(g_{13})=-1$,  whence $i=25$. Then $\phi(1)=1260=13a-1$, whence $a=97$. Then
$\phi(g_2)=-3a-1=-592$, which is false again.
\enp

 \section{The case of ${}^3D_4(q)$}

In this section we consider the groups $G={}^3D_4(q)$ and prove the following result.

\begin{theo}\label{td43}
Theorem {\rm \ref{mth1}} is true for groups of type ${}^3D_4(q)$.
\end{theo}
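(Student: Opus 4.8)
The plan is to reduce the statement for $G={}^3D_4(q)$ to results already established for the subgroups $\SL_3(q)$ and $\SU_3(q)$, in the same spirit as the treatment of $G_2(q)$ in Section 7. First I would record the structure of the maximal tori of $G={}^3D_4(q)$: there are seven conjugacy classes, whose orders are $\Phi_1^2=(q-1)^2$, $\Phi_2^2=(q+1)^2$, $\Phi_1\Phi_3=(q-1)(q^2+q+1)$, $\Phi_2\Phi_6=(q+1)(q^2-q+1)$, $\Phi_3\Phi_6$-type tori, and the cyclic tori of orders $\Phi_{12}=q^4-q^2+1$ and $\Phi_1\Phi_2\cdots$ combinations. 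Since $g$ has prime power order coprime to $q$, it is semisimple, and so $g$ lies in some maximal torus $T$; I would then show that every such $g$ is conjugate into a subgroup $H\cong\SL_3(q)$ or $H\cong\SU_3(q)$, using the known embeddings of $\SL^{\pm}_3(q)$ in ${}^3D_4(q)$ and the fusion of semisimple classes, by analogy with Lemma~\ref{mt1}. The cases where $|g|$ divides $\Phi_{12}(q)=q^4-q^2+1$ (the large cyclic torus) must be handled separately, since these elements need not lie in a rank-$2$ subsystem subgroup; there Sylow $p$-subgroups are cyclic and I would invoke \cite{Z2} together with Lemma~\ref{tr1} after checking the relevant character values are constant on $T\smallsetminus\{1\}$ via Lemma~\ref{sy1} and Corollary~\ref{sy2}.

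Next, for $g$ lying in $H\cong\SL_3(q)$, the argument of Lemma~\ref{g2t} transfers almost verbatim: if $g$ is reducible in $H$ I apply Corollary~\ref{hh2}, and otherwise $|g|$ divides $q^2+q+1$, Sylow $p$-subgroups are cyclic, and Lemma~\ref{ss3} gives the conclusion, reducing the cross-characteristic case $\ell\mid(q^2+q+1)$ to the ordinary case via liftability. The genuinely delicate part is $H\cong\SU_3(q)$ with $p=2$ and $|g|$ dividing $q+1$, exactly the situation where Sylow $p$-subgroups of $G$ are \emph{not} cyclic, so \cite{Z2} is unavailable. Here I would follow the $G_2(q)$ strategy: show that the restriction to $H$ of every non-trivial $\phi\in\Irr_\ell G$ contains a non-trivial irreducible constituent that is \emph{not} a Weil module, which combined with Proposition~\ref{pr5} (for $p=2$) and \cite[Lemma 6.1]{TZ8} (for $p>2$) yields $\deg\phi(g)=|g|$.

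To produce the non-Weil constituent I expect to use the Gelfand-Graev machinery developed in Lemmas~\ref{r0r}, \ref{r5r}, and~\ref{r8r}, proving the inner-product positivity $(\tau|_H,\Gamma)>0$ for every non-trivial ordinary $\tau$ and then passing to Brauer characters through the decomposition numbers. This requires the analogues of inequality~\eqref{eq1} and~\eqref{eq2}, namely $\tau(1)+(q-1)\tau(t)+q\tau(u)>0$ evaluated at a transvection $t$ and a regular unipotent $u$, which I would verify from the known character tables of ${}^3D_4(q)$ (Deriziotis-Michler and Geck). Alternatively, for the defining-characteristic fusion argument I would invoke the geometric approach of Lemmas~\ref{a12} and~\ref{ti1}, checking that a long-root parabolic of $G$ meets $H$ so that a Sylow $r$-subgroup of $H$ cannot act trivially on $C_V(Z(U))$, which forces a non-Weil constituent.

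The main obstacle will be the bookkeeping for the exceptional interactions between the embeddings $\SL^{\pm}_3(q)\hookrightarrow{}^3D_4(q)$ and the cyclotomic structure, in particular verifying that every semisimple prime-power-order element really does fuse into a rank-$2$ subsystem subgroup or into the cyclic $\Phi_{12}$-torus, and confirming the positivity inequalities~\eqref{eq2} uniformly across all decomposition-number configurations for $\ell\mid(q^2-1)$, $\ell\mid(q^2\pm q+1)$, and $\ell\mid(q^4-q^2+1)$. I anticipate no conceptual difficulty beyond what Section~7 already resolves, but the case analysis will be longer because ${}^3D_4(q)$ has more tori and a richer unipotent character theory than $G_2(q)$.
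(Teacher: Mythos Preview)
Your plan for the cyclic Sylow case ($p\mid q^4-q^2+1$) is essentially what the paper does, though the paper carries out the explicit computation of $\phi(t)$ for each non-liftable unipotent Brauer character using the decomposition matrices of Geck and Himstedt (Lemma~\ref{ub1}) rather than leaving it as a sketch.

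For the non-cyclic case, however, there is a genuine gap in your strategy. You propose to fuse $g$ into a subgroup $H\cong\SL^{\ep}_3(q)$ and then rerun the Gelfand--Graev inequalities of Lemmas~\ref{r0r}--\ref{r8r} directly for $G={}^3D_4(q)$. The paper does something structurally different and simpler: it proves (Lemmas~\ref{l33}, \ref{sc1}, \ref{lt1}, \ref{fr1}) that the copy of $\SL^{\ep}_3(q)$ sitting inside the maximal subgroup $M_1\cong(C_{q^2+\ep q+1}\circ\SL^{\ep}_3(q)).\gcd(3,q-\ep)$ is already $G$-conjugate into a subgroup $M_2\cong G_2(q)$. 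This lets the paper invoke Theorem~\ref{g2o} wholesale, avoiding any new Gelfand--Graev bookkeeping for ${}^3D_4(q)$. Your proposal to verify $\tau(1)+(q-1)\tau(t)+q\tau(u)>0$ from the Deriziotis--Michler tables may well succeed, but it is not the route taken, and the paper's conjugacy argument (which goes through the $8$-dimensional $\mathrm{Spin}_8$-module and Lang's theorem) is the key new ingredient in this section.

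More seriously, your claim that every semisimple $p$-element fuses into $\SL^{\ep}_3(q)$ or the $\Phi_{12}$-torus is not correct as stated. The relevant maximal subgroup containing a Sylow $p$-subgroup for $p\mid(q^2+\ep q+1)$ is $X\circ Y$ with $X\cong\SL^{\ep}_3(q)$ and $Y\cong C_{q^2+\ep q+1}$, and $g=xy$ may have $y\neq 1$. The paper handles this in Proposition~\ref{D66}(ii) by a spectrum argument: assuming $\deg\theta(g)<|g|$ forces every non-trivial constituent of $\theta|_X$ to be Weil, which is already contradicted (via the $G_2(q)$ embedding and Lemma~\ref{ti1} or~\ref{r8r}) unless $\ell\neq p$; in the remaining case one shows $1\notin\spec\theta(g)$ and then derives a contradiction from the primitive action of $N_G(T_\ep)$ on $T_\ep\cong C_{q^2+\ep q+1}^2$ via \cite[Lemma~4.13]{Z4}. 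Your outline does not account for the $Y$-component at all. Finally, note that the delicate case is not $p=2$ (the primes dividing $q^2+\ep q+1$ are always odd), and the $p=3$ case is treated separately using the subgroup $\SL_2(q)\circ\SL_2(q^3)$ rather than $\SL^{\ep}_3(q)$.
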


We first consider the case where Sylow $p$-subgroups are cyclic and next the remaining cases.

\subsection{The case of cyclic Sylow $p$-subgroups} 

 Note that $G$ contains a cyclic torus of order  $q^4-q^2+1$.
As $|T|$ is coprime to $|G|/|T|$, the Sylow  $p$-subgroups of $T$ and of $G$ are cyclic
for $p$ dividing $|T|$, and $C_{G}(t)=T$ for every $1\neq t\in T$, that is, $t$ is regular. So
the assumptions of Lemmas \ref{sy1} and \ref{cs2} hold. Therefore, if $\phi\in\Irr_\ell G$ then
either $\phi|_T=k\cdot \rho_T^{\reg}$ with $k>0$ or $\phi$ is liftable or $\phi$ is unipotent.
If $\ell\in\{0,p\}$ then Theorem \ref{td43} follows from \cite{Z2}, so we assume $\ell\neq 0,p$.
In addition, we can assume that $\phi$ is not liftable. By Lemma   \ref{cs2}, we are left with
$\ell$-modular Brauer \ir characters from the unipotent blocks, that is, we assume that $\phi$
is a constituent of a unipotent character modulo $\ell$.

We first specify Lemma \ref{tr1} for our situation. Note that the degree of any non-trivial
Brauer character of $G$ (provided $\ell$ is coprime to $q$) is at least $q^5-q^3$ \cite[Table 1]{SZ}.

\bl{ct1}\mar{ct1} In the notation of Lemma {\rm \ref{tr1}}, let $C=T< G$ so that $|C|=q^4-q^2+1$. Let $1_G\neq \phi\in\Irr_\ell G$. Suppose that $\phi(g)=c<0$ for all $g\in C$, and that $-c< q$. Then   $\phi|_{C}-\rho_C^{\reg}$ is a proper character of $C$.
\el

\begin{proof} We have $-c(|C|-1)<q^5-q^3 \leq \phi(1)$, so the result follows from Lemma \ref{tr1}.\end{proof}

The $\ell$-decomposition matrix of $G$   is determined by Geck \cite{Ge2} for $\ell>2$ and
Himstedt \cite{Hi2} for $\ell=2$, but a few  entries for which only partial information
has been obtained. For $\ell>3$ Dudas \cite{Du1} has determined some of those entries.
For undetermined entries we need  upper bounds; for $\ell=2$  these are available from  \cite{Hi2},
and for $\ell>2$ these can be read off from the proof given in \cite{Ge2}. We acknowledge  Dr. Himstedt's
  help with this matter.

There are 8 unipotent characters of $G$, denoted by ${\bf 1}, [\ep_1],[\ep_2],[\rho_1],[\rho_2], \StC, {}^3D_4[1]$, ${}^3D_4[-1]$ in \cite{Ge2} and elsewhere. We simplify this notation below by setting $D^+={}^3D_4[1]$,  $D^-={}^3D_4[-1]$ and using $1_G,\ep_1,\ep_2,\rho_1,\rho_2$ in place of
 ${\bf 1}, [\ep_1],[\ep_2],[\rho_1],[\rho_2]$.

We have $\ep_1(1)=q(q^4-q^2+1)$, $ \ep_2(1)=q^7(q^4-q^2+1)$, $ \StC(1)=q^{12}$, $ \rho_1(1)=q^3(q^3+1)^2/2$, $  \rho_2(1)=q^3(q+1)(q^4-q^2+1)/2$, $ D^+(1)=q^3(q-1)^2(q^4-q^2+1)/2$, $D^-(1)=q^3(q^3-1)^2/2$.

Furthermore $ \ep_1(1)\equiv 0\pmod{T}$, $\ep_2(1)\equiv 0\pmod{T}$, $\rho_2(1)\equiv 0\pmod{T}$,  $ D^+(1) \equiv 0\pmod{T}$, $\rho_1(1)\equiv -1\pmod{T}$, $ \StC(1)\equiv 1\pmod{T}$, $D^ -(1)\equiv 1\pmod{T}$. This implies
$ \ep_1(t)=  \ep_2(t)=\rho_2(t)= D^+(t)= 0$, $\rho_1(t)=-1$, $ \StC(t) =D^ -(t)=1$ for $1\neq t\in T$.

Himstedt \cite{Hi2} identifies 
the $\ell$-modular \ir \reps of $G$ of degree $<(q^5-q^3+q-1)^2$. As a consequence of this, we have

\bl{hr1}
Let $1_G\neq \phi\in\Irr_\ell G$. Then either $\phi(1)>q^8+q^4$ or
$\phi$ is an \ir constituent of $\ep_1^\circ$ and $\phi(1)=\ep_1(1)$ or $\ep_1(1)-1$.\el

\med
Let $T$ be a torus of order $q^4-q^2+1$ and $1\neq t\in T$ is an arbitrary $\ell'$-element. Using the data from \cite{Ge2} and \cite{Hi2}, we
will show that either $\phi(t)\geq 0$ or $-\phi(t)\cdot |T|<\phi(1)$ whenever $\phi\neq 1_G$ is a unipotent Brauer character of $G$. For this we first obtain an upper bound for $\phi(t)$.

 Recall that $\Irr^0_\ell(G)$ denotes
the set of non-liftable unipotent Brauer characters of $G$, and use $\phi_i$ with $1\leq i\leq  |\Irr^0_\ell(G)|$ to denote the Brauer characters in $\Irr^0_\ell(G)$. (This notation for $\phi_i$ does not coincide with the one used in \cite{Hi2}.)


\med
If $2,3\neq \ell |(q-1)$ then $|\Irr^0_\ell(G)|=0$, so every \ir $\ell$-modular character is liftable.


\med
Let $\ell|(q^4-q^2+1)$. By \cite[p. 3265]{Ge2}, $|\Irr^0_\ell(G)|=2$,
and $\phi_1=\rho_1-1_G$,  $\phi_2=\StC-\rho_{1}$.
So $\phi_1(t)=-2$ and $\phi_2(t)=2$.

\med
Let  $2,3\neq \ell |(q+1)$. Then $|\Irr^0_\ell(G)|=3$ and $\phi_1=\ep_1-1_G$, $\phi_2=\ep_2- 1_G$,
$\phi_3=\StC-  \ep_1-\ep_2-aD^- -b D^+$, where $1\leq a,b\leq (q-1)/2$. (In fact, $a=b=2$ unless possibly $\ell=5$ and $q+1$ is not a multiple of 25 \cite[Theorem 2.3]{Du1}).
 Then $\phi_1(t)=\phi_2(t)=-1$, $\phi_3(t)=1-a$ so $-\phi_3(t)<q$. 

\med
Let  $3\neq \ell |(q^2+q+1)$. Then $|\Irr^0_\ell(G)|=4$ and $\phi_1=\rho_1-\ep_1$,
$\phi_2=\rho_2-1_G$, $\phi_3=\ep_2-\rho_1+\ep_1-a D^+ $,  $\phi_4=\StC -c\phi_3-b D^+ -\phi_2=
\StC-c\ep_2+c\rho_1-c\ep_1-acD^+-\rho_2+1_G-b D^+=\StC-c\ep_2+c\rho_1-c\ep_1-(b+ac)D^+-\rho_2+1_G$.

So $\phi_1(t)=\phi_2(t)=-1$, $\phi_3(t)=1$,
$\phi_4(t)=2-c$. In fact, $c=2$ by \cite[Theorem 2.4]{Du1}, so $\phi_4(t)=0$.

\med
Let  $3\neq \ell |(q^2-q+1)$. Then $|\Irr^0_\ell(G)|=3$ and $\phi_1=\rho_2-\ep_1-1_G$,
 $\phi_2=\ep_2-a D^- -\phi_1-1_G =\ep_2-a D^- -\rho_2+\ep_1$,
  $\phi_3=\StC -d\phi_2 -cD^+  -b D^- -\phi_1- \ep_1=
     \StC -d(\ep_2-aD^- +\ep_1+1_G) -cD^+  -bD^- -(\ep_1-1_G)- \ep_1=\StC -d\phi_2 -cD^+  -b D^- \phi_1- \ep_1=\StC -d\ep_2+(ad-b)D^- +d\ep_1+(1-d).1_G -cD^+$.

  So $\phi_1(t)=-1,$ $\phi_2(t)=-a$, $\phi_3(t)=2+ad-b $. Here $a=d=0$, $b=2$ by \cite[Theorem 2.5]{Du1},
  so $\phi_3(t)=0$.

\med
Let $\ell=3|(q-1)$. Then $|\Irr^0_\ell(G)|=4$ and $\phi_1=\rho_1-\ep_1$, $\phi_2=\rho_2-1_G$,
 $\phi_3=\ep_2-\phi_1-aD^+=\ep_2-\rho_2+\ep_1-a D^+ $,  $\phi_4=\StC -c\phi_3-b D^+ -\phi_2=
 \StC -c(\ep_2-\rho_2+\ep_1-a D^+)-b D^+ -\rho_2+1_G=\StC -c(\ep_1+\ep_2)+(c-1)\rho_2+(ac-b)D^+ +1_G$.

 So $\phi_1(t)=\phi_2(t)=-1$, $\phi_3(t)=1$, $\phi_4(t)=2-c$. Here $c\leq q$, so $-\phi_4(t)<q$.


 \med
Let $\ell=3|(q+1)$. Then $|\Irr^0_\ell(G)|=4$ and $\phi_1=\ep_1-1_G$, $\phi_2=\rho_2-\phi_1-2.1_G$, 
 $\phi_3=\ep_2-\phi_2-aD^- -1_G$,
   $\phi_4=\StC -d\phi_3 -c D^+  -b D^- -\phi_2-\phi_1+1_G=\StC -d\phi_3 -c D^+  -b D^--\rho_2(1)$, as $\phi_1+\phi_2=\rho_2-2.1_G$.


 So $\phi_1(t)=\phi_2(t)=-1$, $\phi_3(t)=-a$, $\phi_4(t)=2+ad-b$.

In this case $0\leq a\leq 1$, $a+1\leq b\leq 3(q+1)/2$, $c\leq (q-1)/2$ and $1\leq d\leq q$ \cite{Ge2}. 
So $-\phi_4(t)\leq \frac{3q-1}{2} $.

\med

Let $\ell=2$. Then by \cite[Theorem 3.1, p.572]{Hi2}, we have
 $|\Irr^0_\ell(G)|=5$ and $\phi_1=\ep_1-  1_G$, $\phi_2=\rho_1-D^+ $, $\phi_3=\rho_2-\phi_2 $,  $\phi_4=\ep_2-  1_G$, $\phi_5=\StC-\phi_4-aD^+ -b\phi_3- \phi_1 -1_G$,
 where $0\leq a,b\leq q$.
 We have $\phi_1(t)=\phi_2(t)=\phi_4(t)=-1$, $\phi_3(t)=1$, $\phi_5(t) =2-b$.

\bl{ub1}
Let $1_G\neq \phi\in\Irr^0_\ell G$ and T be a torus of order $q^4-q^2+1$ of G. Then either $\phi(t)>0$ or $-\phi(t)\cdot |T|<\phi(1)$ for every $1\neq t\in T$. In particular, $\deg \phi(t) = |t|$ for any $p$-element $t \in T$ with $p \neq \ell$.
\el

\bp Suppose first that $\phi$ is a non-trivial \ir constituent of $\ep_1$. Then $\phi(1)= \ep_1(1)-1$
(and either $\ell=2$ or $\ell=3|(q+1)$). Then $\phi(t)=-1$ and $\phi(1)=q(q^4-q^2+1)-1$, whence the claim.

Suppose that $\phi$ is not a constituent of $\ep_1$. Then $\phi(1)>q^8+q^4$ by Lemma \ref{hr1}, and either $\phi(t)>0$ or $-\phi(t)\leq (3q-1)/2$. Then  $|T|\cdot(3q-1)/2=(q^4-q^2+1) (3q-1)/2<q^8+q^4$.

For the last claim, observe by Lemma \ref{cs2} that $\phi$ takes a constant value $c$ on $\langle t \rangle \setminus \{1\}$. Now apply Lemma \ref{tr1}.
\enp

\begin{remar}
{\em In \cite{Hi2} there are weaker bounds for $a,b,c,d$ for $3=\ell|(q+1)$, specifically
$a\leq q(q-1)$, $b\leq (q^3-1)/2$, $c,d\leq(q-1)/2$. These are sufficient for our purpose, as either
$\phi(t)>1$ or $-\phi_4(t)\leq ((q^3-1)/2)-2$,
and again we have $q^3|T|<\phi_4(1)$.}
\end{remar}

\subsection{The case where Sylow $p$-subgroups are not cyclic}

For uniformity we denote by $\SO_{2n}(\overline{\FF}_q)$ the subgroup of index  $2$ of $O_{2n}(\overline{\FF}_q)$ if $q$ is odd and of $O_{2n}(\overline{\FF}_q)$ if $q$ is even.
Then $\SO_{2n}(\overline{\FF}_q)$ is a connected simple algebraic group of type $D_n$
If $q$ is even then $SO_{2n}(\overline{\FF}_q)$ is formed by elements of quasi-determinant $1$ in $O_{2n}(\overline{\FF}_q)$
Also, let $\GC=Spin_{2n}(\overline{\FF}_q)$ denote the simply connected simple algebraic group of type $D_n$, so
that $\GC=\SO_{2n}(\overline{\FF}_q)$ when $2|q$ and $\GC/C_2 = \SO_{2n}(\overline{\FF}_q)$ when $2 \nmid q$.
Taking $n=4$, we can view $G = \GC^F$ for some Steinberg endomorphism $F:\GC\to\GC$.
By \cite[p. 33]{Kl}, $G\cong {}^3D_4(q)$ has maximal subgroups isomorphic to $G_2(q)$ and $(C_{q^2+q+1} \circ \SL_3(q)).\gcd(3,q-1)$ (where $C_{q^2+q+1}$ is cyclic of order
$q^2+q+1$).
If $g\in G$ lies in the $G_2(q)$-subgroup then we can use our result on $G_2(q)$ (Theorem \ref{g2o}). So our first goal is to establish Lemma \ref{fr1} below.

   \bl{l33} Let $\mathbf{G}$ be a simple, simply connected algebraic group of type $D_4$ in defining characteristic $r$, and let $V$ be the standard  $\mathbf{G}$-module. Let $G\cong {}^3D_4(q)
< \mathbf{G}$, where $r|q$ and $q>2$.  Let $M_1\cong (C_{q^2+\ep q+1} \circ \SL^\ep_3(q)).\gcd(3,q-\ep)$ and $M_2\cong G_2(q)$ be maximal subgroups of G and $H_1 < M_1$ 
  a subgroup isomorphic to  $\SL^\ep_3(q)$.   
Then $H_1$ is   $\mathbf{G}$-conjugate to a subgroup of $M_2$.
\el

\bp Note that $V$ is self-dual. Our strategy is to show that for some subgroup $H_2\cong \SL^\ep_3(q)$ of $M_2=G_2(q)$
for every $i=1,2$ there are $H_i$-stable subspaces  $V^i_1,V^i_2,V^i_3$
such that $V=V^i_1 \oplus V^i_2 \oplus V^i_3$, $V^i_1,V^i_2$ are totally singular  of dimension 3,  and $V^i_3$ is non-degenerate and trivial on $H_i$. Then $V^i_1,V^i_2$ are dual $FH_i$-modules, and the result will follow from Witt's theorem. Indeed, by Witt's theorem applied to $\GO(V)$, there is some $x \in \GO(V)$ that sends $V^1_j$ to $V^2_j$ for $j = 1,2,3$.
In our case $V^i_3$ is a trivial $H_i$-module of dimension 2 and so we can find an element
$y \in (\GO(V^2_3) \setminus \SO(V^2_3))$ that commutes with $H_2$. Replacing $x$ by $yx$ if necessary, we may assume
$x \in \SO(V)$ and sends $V^1_j$ to $V^2_j$. Since all $3$-dimensional nontrivial representations of $H_i$ are irreducible and
quasi-equivalent to the natural representation, we now have that $x$ sends the image of $H_1$ in $\SO(V)$ to the image of
$H_2$ in $\SO(V)$, and we are done if $r=2$. If $r>2$, then an inverse image of $x$ conjugates
the full inverse image $C_2 \times H_1$ to $C_2 \times H_2$, hence $H_1$ to $H_2$.

If $r\neq 3$ then the subgroups of $M_2$ isomorphic to $\SL^\ep_3(q)$ are conjugate so choose for $H_2$ any of them. If $r=3$ then $G_2$ has two conjugacy classes of subgroups isomorphic to $\SL^\ep_3(q)$, one of which is reducible on the \ir $FM_2$-modules of dimension 7, and the other is irreducible \cite[Theorem A]{Kl}. In this case we choose $H_2$ from the former one.

It suffices to show that the composition factors of $V|_{H_i}$, $i=1,2$ are of dimension 1 or 3.
Indeed, as $V$ is self-dual and the simple $H_i$-modules of dimension $3$ are not self-dual,
there are two composition factors of dimension  3 and they are dual to each other. Let $N$ be a composition factor of dimension  3. It suffices to observe that ${\rm Ext}^1_{H_i}(N,N^*)=0$ and ${\rm Ext}_{H_i}^1(N,N_0)=0={\rm Ext}_{H_i}^1(N_0,N)$,
where $N_0$ is the trivial $FH_i$-module and $N'$ is dual to $N$.
 If $r>2$ then every $FH$-module of dimension almost 6 is completely reducible by
  \cite[Theorem 1.1]{Mc}. If $r=2$ then this follows by \cite{Si}.

Suppose first that $i=2$.  As the dimensions at most $8$ of nontrivial simple $G_2(q)$-modules are $6$ if $r=2$ and $7$ if $r \neq 2$ \cite[p. 167]{Lu}, the fixed point subspace $L$ of $M_2$ on $V$ is non-zero. Next, if  $r\neq 2$ then $K$ must be non-degenerate, and $V=L\oplus V'$, where $V'$ is an \ir $FM_2$-module. If $r=2$ then as $\dim V/L = 1$, $V/L$ is reducible, and has a composition factor $V'_1$ of degree 6. Note that the \ir constituents of the restrictions of these modules
$V'$, respectively $V'_1$ to $H_2$ are  3-dimensional and dual to each other.
(Indeed, if $r=2$ then all nontrivial simple $H_2$-modules of dimension $\leq 6$ are of dimension $3$ and non-self-dual, see \cite[p. 149]{Lu}. If $r \neq 2$, then by the choice of $H_2$, $H_2$ is reducible on $V'$. If moreover it has a composition
 factor $W$ of dimension $\neq 1,3$ on $V'$, then $\dim W = 6$ by \cite[p. 149]{Lu} and $W$ is not self-dual. The other composition factors of the $H_2$-modules are all trivial, and this contradicts $V \cong V^*$.)

Suppose  that $i=1$. Let  $K$ be an \ir $FH_1$-submodule of $V$ of maximal
dimension. Then $K\neq V$ by Schur's lemma. Suppose that $\dim K=7$. By \cite[p. 149]{Lu}, we have $r=3$ and
$V$ has a composition factor $K'$ of dimension 1. As $V$ is self-dual, we conclude that $V$ is completely reducible over $H_1$, so $K$ is non-degenerate,
$V=K\oplus K^\perp$ and  $\dim K^\perp=1$. Now the odd-order subgroup $C_{q^2+\ep q+1}$ must act trivially on both $K$ and $K^\perp$, a contradiction. Suppose that $\dim K=6$.  Then $r > 2$ and $K$
 is not self-dual \cite[p.149]{Lu}, so $V_{H_1}$ has a composition factor dual to $K$, which is not the case by dimension reason. So $\dim K\leq 5$ and hence $\dim K = 3$ by \cite[p. 149]{Lu}.
\enp

 \bl{sc1}
 Let $H\cong \SL^\ep _3(q)\subset \GC=\mathrm{Spin}_8(\overline{\FF}_q)$ and $V$ the natural module for $\SO_8(\overline{\FF}_q)$. Suppose that $V|_H = V_1 \oplus V_2 \oplus V_3$ with $V_1,V_2$ totally singular of dimension $3$ and $V_3$ a non-degenerate subspace on which $H$ acts trivially. Then $Y:=C_\GC(H)$ is connected. \el

\bp By assumption, $V_1,V_2$ are dual to each other and $V_3$ is trivial as $H$-modules. Hence $Y$ stabilizes these modules; let $\bar Y$ denote the image of $Y$ in $\SO(V)$. Then
$\bar Y \leq \{\diag(y_1,y_2,t)\}$, where $y_1\in \GL_3(\overline{\FF}_q)$ is a scalar $(3\times 3)$-matrix,
$y_2=y_1^{-1}$ and $t\in \GO(V_3)$.  In fact, $t\in \SO(V_3)$. This is obvious if $q$ is odd; if
$q$ is even then, since $\GL_3(\overline{\FF}_q)$ has no subgroup of index $2$, so the matrix $\diag(y_1,y_1\up,\Id_2)$ lies in $\SO(V)$,
whence $\diag(\Id_6,t) \in \SO(V)$, which implies the claim. 
Note that $C_{\SO(V)}(H)$ contains the subgroup $\diag(\Id_3,\Id_3,\SO(V_3))$. Hence $C_{\SO(V)}(H)$ is isomorphic to the direct product of $Z(\GL_3(\overline{\FF}_q))\cong \overline{\FF}_q^\times$ and $\SO_2(\overline{\FF}_q)$, which is a connected group.
Now, if $r=2$ then $Y = \bar Y = C_{\SO(V)}(H)$, and we are done. If $r > 2$, then these two subgroups lift to a one-dimensional torus $T$ and $S=\Spin_2$, respectively, which centralize each other modulo $C_2 = Z(\Spin(V))$. Since $S$ is perfect, we have that $[T,S] = [T,[S,S]]$ is contained in $[[T,S],S] = 1$, so the full inverse image of $C_{\SO(V)}(H)$ is a central product of two
connected subgroups, and so is connected. Each of $T$ and $S$ centralizes the perfect subgroup $H$ modulo $Z(\Spin(V))$,
so the same argument shows that $Y$ is the full inverse image of $C_{\SO(V)}(H)$, and so $Y$ is connected.
  \enp

\bl{lt1} Let $\mathbf{ G}$ be a connected algebraic group, 
 $\Fr$  a Frobenius map, and let $G=\mathbf{ G}^{\Fr}=\{g\in \mathbf{ G} \mid \Fr(g)=g\}$.
Let $H$ be a subgroup of $G$ and $H_1=xHx^{-1}\leq G$ for some $x\in \mathbf{ G}$. Suppose that $C_{\mathbf{G}}(H)$ is connected. Then $H,H_1$ are conjugate in G.\el

\bp Let $h\in H$. Then $xhx^{-1}\in G$, so $xhx^{-1}=\Fr(x)h\Fr(x)^{-1}$. Then $\Fr(x)^{-1}x\in C_{\mathbf{ G}}(h)$ for all $h\in H$.
Therefore, $\Fr(x)^{-1}x\in C_\GC(H)$. By Lang's theorem, there exists $c\in C_\GC(H)$
such that  $\Fr(x)^{-1}x=\Fr(c)^{-1}c$, so $xc\up\in G$ and $H_1=(xc\up)H(xc\up)^{-1}$.  \enp

\bl{fr1}
Let $H_1$ be as in Lemma {\rm \ref{l33}}. Then $H_1$ is G-conjugate to a subgroup of $M_2\cong G_2(q)$.\el

\bp By Lemma \ref{l33}, $xH_1x\up< M_2 < G$ for some $x\in \mathbf{G}$, so Lemma \ref{lt1}  yields the result.\enp

\begin{remar}
{\em Lemma \ref{fr1} justifies the claim in \cite[p. 2520, line 6]{Z4} stated therein with no proof.
Thus, this fixes a gap in the proof of  \cite[Lemma 4.14]{Z4} which gives a proof of Theorem \ref{td43} for $\ell=0$ and $p>2$.}
\end{remar}

\begin{propo}\label{D66}
Let  $g\in G\cong{}^3D_4(q)$  be a semisimple $p$-element. Suppose that Sylow p-subgroups of G are not cyclic. Let $\theta \in\Irr_F G$ with $\dim\theta  >1$. Then  $\deg\theta  (g)=|g|$.\end{propo}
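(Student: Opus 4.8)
The plan is to compute $\deg\theta(g)$ by pushing $g$ into one of the two relevant subgroups $M_2\cong G_2(q)$ and $M_1^\ep\cong(C_{q^2+\ep q+1}\circ\SL^\ep_3(q)).\gcd(3,q-\ep)$ of $G$, where Theorem \ref{g2o} and the $\SL^\ep_3(q)$-results of Section 5 (Corollary \ref{cr1}, Lemma \ref{ss3}) are already available. First I would pin down the primes that occur. Since $|G|_{r'}=\Phi_1^2\Phi_2^2\Phi_3^2\Phi_6^2\Phi_{12}$, Lemma \ref{sylow} shows that a non-cyclic Sylow $p$-subgroup forces $p\mid\Phi_m(q)$ for some $m\in\{1,2,3,6\}$ (the factors of multiplicity $2$), together with the small-prime overlaps at $p\in\{2,3\}$. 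By Lemma \ref{cf6}, $g$ is conjugate into the homocyclic part $A$ of the maximal torus $T$ of order $\Phi_m^2$ (resp. $\Phi_1^2\Phi_3$ or $\Phi_2^2\Phi_6$ when $m\in\{1,2\}$).

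The easy half is $m\in\{1,2\}$, together with $p=2$ and with $p=3$ when $m_p\in\{1,2\}$ and $3\nmid q-\ep$. Here a direct comparison of $p$-parts gives $|G|_p=|G_2(q)|_p$, so $M_2\cong G_2(q)$ contains a full Sylow $p$-subgroup of $G$ and $g$ is $G$-conjugate into $M_2$. As $G$ is simple and $\dim\theta>1$, the subgroup $M_2$ is not contained in $\ker\theta$, so $\theta|_{M_2}$ has a non-trivial irreducible constituent $\psi$; by Theorem \ref{g2o}, $\deg\psi(g)=|g|$, and since the minimum polynomial degree on a submodule cannot exceed that on the whole module, $\deg\theta(g)=|g|$.

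The substantial half is $m\in\{3,6\}$ (and the overlap cases $p=3$ with $3\mid q-\ep$), where $G_2(q)$ has only a cyclic Sylow $p$-subgroup and cannot receive a ``diagonal'' element of $T$; now $T\le M_1^\ep$ with $\ep=+$ for $m=3$ and $\ep=-$ for $m=6$. I would first dispose of non-regular $g$: by Borel--Tits (Lemma \ref{bt4}) such $g$ lies in a proper parabolic subgroup, hence in a Levi factor of smaller rank with cyclic Sylow $p$-subgroups, where the claim follows from the reducible-case argument (Corollary \ref{hh2}) and the results of Section 5. For regular $g$ (so $C_G(g)=T$) I would write $g=cs$ with $c$ in the central cyclic factor $C=C_{q^2+\ep q+1}$ of $M_1^\ep$ and $s$ in the Singer torus of $\SL^\ep_3(q)$; both are $p$-elements and $|g|=\mathrm{lcm}(|c|,|s|)$. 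Decomposing $\theta|_{M_1^\ep}$ by the central character of $C$, on a constituent $\lambda\boxtimes W$ the element $g$ acts as the scalar $\lambda(c)$ times $W(s)$, so the eigenvalues of $\theta(g)$ are the union of the sets $\lambda(c)\cdot\mathrm{Spec}\,W(s)$ over the constituents. By Corollary \ref{cr1} and Lemma \ref{ss3}, for a non-trivial non-Weil constituent $W$ the matrix $W(s)$ already has all $|s|$-th roots of unity as eigenvalues, with exactly one root missing in the single exceptional $\SL^\ep_3(q)$-case.

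The crux, which I expect to be the main obstacle, is to show that these contributions combine to give the full group $\mu_{|g|}$ of $|g|$-th roots of unity. The key auxiliary fact I would establish is $\deg\theta(c)=|c|$, guaranteeing that the scalars $\lambda(c)$ occurring in $\theta|_{M_1^\ep}$ exhaust $\mu_{|c|}$; I would prove it by exhibiting an element of the relative Weyl group $N_G(T)/T$ that interchanges the two $\Phi_m$-factors of $T$, which makes $C$ $G$-conjugate to the Singer torus of $\SL^\ep_3(q)$ and hence, by Lemma \ref{fr1}, conjugate into $M_2\cong G_2(q)$, reducing $\deg\theta(c)=|c|$ to Theorem \ref{g2o}. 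Granting this, multiplying $\mu_{|c|}$ by the root-of-unity sets $\mathrm{Spec}\,W(s)=\mu_{|s|}$ from good constituents produces all of $\mu_{|g|}$, while the lone exceptional module (which omits one root) is compensated by a second central character. The genuinely delicate points I would still have to treat, most likely through a Gelfand--Graev/restriction argument in the spirit of Lemmas \ref{r8r} and \ref{ti1} but now tracking central characters of $C$, are (i) that enough central characters $\lambda$ appear paired with a non-trivial, non-exceptional $\SL^\ep_3(q)$-constituent, and (ii) the book-keeping for the small primes $p\in\{2,3\}$ where several $\Phi_m(q)$ share the divisor $p$.
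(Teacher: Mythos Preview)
Your overall reduction to $G_2(q)$ for the ``easy half'' is correct and matches the paper, but your plan for the substantial half ($m\in\{3,6\}$, i.e. $p\mid q^2+\ep q+1$) has a genuine gap and diverges from what the paper actually does.

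The paper argues by contradiction: assuming $\deg\theta(g)<|g|$, one first uses \cite[Lemma 4.13]{Z4} to arrange $|x|=|y|=|g|$ in the decomposition $g=xy\in X\times Y$, and then observes that for \emph{every} constituent $\phi\otimes\lambda$ of $\theta|_{X\times Y}$ with $\phi\neq 1_X$ one must have $\deg\phi(x)<|x|$, which forces $\phi$ to be Weil. This is immediately contradicted by Lemmas \ref{ti1} and \ref{r8r}, which only assert the \emph{existence} of a single non-Weil constituent in $\theta|_X$. For the residual cases not covered by those lemmas (where one reduces to $p\neq\ell$ and $Z(X)=1$), the paper uses a separate eigenvalue argument: one shows $1\notin\mathrm{Spec}\,\theta(g)$, and then exploits the primitivity of the $N_G(T_\ep)/T_\ep$-action on $T_\ep\cong C_{q^2+\ep q+1}^2$ (again via \cite[Lemma 4.13]{Z4}) to conjugate $x$ to some $x^ky^l$ with $|x^k|=|y^l|=|g|$, contradicting $1\in\mathrm{Spec}\,\theta(x)$. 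Your approach instead needs the much stronger statement that \emph{enough} central characters $\lambda$ are paired with non-Weil constituents---this is precisely your acknowledged point (i), and neither Lemma \ref{ti1} nor Lemma \ref{r8r} gives it. The contrapositive formulation in the paper is what makes the existing lemmas suffice.

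Your treatment of $p=3$ is also off: the paper does \emph{not} use $M_1^\ep$ here at all, but passes to a subgroup $\SL_2(q)\circ\SL_2(q^3)$ and argues via \cite[Lemma 3.3]{TZ8} applied to $\SL_2(q^3)$, together with Zsygmondy-type constraints ruling out $(q^3+1)/\gcd(2,q-1)$ being a $3$-power. Folding $p=3$ into the $\SL^\ep_3(q)$ analysis as you propose would run into trouble because the Weil-representation characterisation you rely on does not control the situation when $3\mid (q-\ep)$ and $Z(\SL^\ep_3(q))\neq 1$.
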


\begin{proof}
(i) The case with $q=2$  can be settled by a computer computation. (Note that when $p\neq \ell$
one can also use the Brauer character table, and the $p=\ell>3$ case follows from \cite{Z2}; so if $q=2$ then it suffices to deal with the case $p=\ell=3$.) Let $q>2$.

If $g$ is contained in a subgroup isomorphic to $G_2(q)$ then the lemma follows  from Theorem \ref{g2o}.
Suppose the opposite. Then $p$ divides the $r'$-part of $|G|/|G_2(q)|=q^6(q^8+q^4+1)$, i.e. $p|(q^8+q^4+1)$.
 As we assume the Sylow $p$-subgroups of $G$ are non-cyclic, we have  $p|(q^4+q^2+1)$, so
$p$ divides $q^2+\ep q+1$ for some $\ep\in\{\pm 1\}$. In particular, $p>2$.

\smallskip
(ii) Here we consider the case $p > 3$. By \cite{Kl},   $G$ contains a subgroup $H$ isomorphic to $X \circ  Y$,  where
$X\cong \SL^\ep_3(q)$   and $Y\cong C_\ep$, a cyclic subgroup of order $q^2+\ep q+1$. 
Then
$$|G|/|H|=\gcd(3,q-\ep)q^9(q^3+\ep)(q^8+q^4+1)/(q^2+\ep q+1),$$ 
whence $(p,|G|/|H|)=1$. 
Therefore, $H$ contains a \syl of $G$. By Lemma \ref{fr1}, 
$X$ is contained in a subgroup $D \cong G_2(q)$.


Express    $g=xy$ for $x\in  X$  and  $y\in  Y$. Then $y\neq 1$ as otherwise $g \in X< D\cong G_2(q)$.   In addition, by \cite[Lemma 4.13]{Z5}, we may assume that $|x|=|y|=|g|$.

Suppose that $\deg\theta(g)<|g|$. Let $\tau$ be any composition factor of $\theta|_H$ such that $\tau=\phi\otimes \lam$ with $1_X\neq \phi\in\Irr _F X$ and $\lam\in\Irr _F Y$.
Then $\deg\phi(x)<|g| = |x|$. This implies that $\phi$ is a Weil \rep of $X$ and $|g|=|C_\ep|$, see  Lemma \ref{ss3}
if $\ep=1$ and \cite[Proposition 6.1]{TZ8} if $\ep=-1$.
Therefore,  every non-trivial composition factor of $\theta|_X$ is a Weil \rep of $X$.
If  $\ep=-1 $ and $(3,q)=1$ then this  contradicts  Lemma \ref{ti1}. If $3 \nmid(q-\ep)$ and $\ell|(q^4+q^2+1)$ then this  contradicts   Lemma  \ref{r8r}.
Thus, if $\ep=-1$, then we may assume $3|q$ and $\ell \nmid (q^4+q^2+1)$, whence $p \neq \ell$.
If $\ep=1$, then, as $p > 3$ and $q^2+q+1$ is a $p$-power, we have $3 \nmid (q-1)$, and so we may assume that
$\ell \nmid (q^4+q^2+1)$, whence $p \neq \ell$.
In both the cases, we have $Z(X)=1$ and $X \circ Y = X \times Y$.

We can write  $\tau  |_{(X\times Y)}=(\chi_1\otimes \lam_1)
+\ldots   +   (\chi_s\otimes \lam_s)$ as a sum of Brauer characters, where  $\chi_i\in
\Irr_F  X$  and $\lam_i\in  \Irr_F  Y$  for $i=1,\ldots ,s.$  We order
the summands so that $ \chi_i(1)>1$ for $i=1,\ldots ,t\leq s$ and
$\chi_i(1)=1$ for $i>t$.  Let $\phi_i$ be the representation afforded by $\chi_i$, and let $\lam_i$ also denote the respective representation of $Y$ as
this character is linear.  
Then $\deg\phi_i(x)<|g|=|x|$. As $ Z(X)=1$ and $\ell \neq p|(q^2+q\ep+1)$,
every non-trivial $p$-element is regular in $X$, so,  by Lemma \ref{cs2}, either
$\phi$ is liftable or constant on the non-identity $p$-elements of $X$.
By Lemma \ref{ss3} and \cite[Proposition 6.1]{TZ8},
if $1 \leq i \leq t$ then  $\deg\phi_i(x)=|x|-1=\dim \phi_i(1)$, $\phi_i$ is real, and hence $\deg \theta  (g)=|g|-1$. Then $1\notin\spec
\theta  (g)$. (Indeed, otherwise some other $|g|$-root $\nu$ of unity is not in the spectrum of $\theta(g)$; as $p>2$ and $g$ is conjugate to $g^{-1}$ (see for instance \cite[Theorem 1(vi)]{TZ5}), $\nu^{-1}\notin\spec
\theta  (g)$, and hence  $\deg \theta  (g)\leq |g|-2$, which is a contradiction).
Since $\phi_i$ is real, we also have that $1 \notin \spec \phi_i(x)$, and hence
$\spec \phi_i(x)$ consists of all nontrivial $|g|$-roots of unity when $1 \leq i \leq t$. It follows that $\lam_i(y)=1$ for $1 \leq i \leq t$.
As $y \notin \ker(\tau)$ and $1 \notin \spec \theta(g)$, we have $s > t$, and none of $\lam_{t+1}(y)\ld \lam_s(y)$ equals 1. \itf $1$ is not an \ei of
$\theta(x^ky^l)$ whenever $k,l$ are coprime to $p$, whereas $1\in\spec\theta(x)$. However,
by \cite[Lemma 4.13]{Z4}, $x$ is conjugate to some $x^ky^l$ with $|x^k|=|y^l|=|x|$
(the key point used in \cite[Lemma 4.13]{Z4} is that $N_G(T_\ep)$ acts primitively on a maximal torus
$T_\ep \cong C_{q^2+\ep q+1}^2$). This is a contradiction.

\smallskip
(iii) Let $p=3$. Then $|g|$ does not divide $q^2-1$. (Indeed, otherwise $|g|$ divides $q-\ep$ for some $\ep\in\{1,-1\}$,
and, by Lemma \ref{cf6}(i), $g$ is contained in a torus of $G$ of order $(q-\ep)^2$.
By Lemma \ref{sylow}(iii),  the tori of order $(q-\ep)^2$ are conjugate in $G$. A subgroup isomorphic to $G_2$ contains a torus of this order, so it contains a conjugate of  $g$.)

It is known (see claim (*) in the proof of \cite[Proposition 4.8, p. 2517]{Z4})
that $g$ is contained in a subgroup $H\cong X\circ Y$, where $X\cong \SL_2(q)$ and $Y\cong \SL_2(q^3). $
Express $g=xy$ with $x\in X$, $y\in Y$ to be 3-elements.
Let $\tau$ be an \ir constituent of $\theta|_H$. Then $\tau=\phi\otimes \eta$, where $\phi,\eta$ are \ir \reps of $X,Y$, respectively. Hence $\tau(g)=\phi(x)\otimes \eta(y)$. Suppose 
that $\deg\theta(g)<|g|$. Then $\deg\tau(g)<|g|$ implies $\deg\phi(x)<|g|$ and  $\deg\eta(y)<|g|$.  If $|y|\leq |x|$ then $|g|$ divides $q^2-1$; by the above this is not the case. So $|y|> |x|$, and hence $|g|=|y|$.
In this case, choose $\tau$ so that $\dim \eta>1$. Then $\deg \eta(y) < |g|=|y|$ implies by \cite[Theorem 1.1]{DZ1} that $3|(q+1)$. By  \cite[Lemma 3.3]{TZ8} applied to $Y$, it follows that  $q^3+1$ is a $3$-power
if $q$ is even and $(q^3+1)/2$ is a 3-power if $q$ is odd. The former case is ruled out by Lemma \ref{zgm} (as $q>2$). In the latter case $(q^3+1)/2=(q^2-q+1)(q+1)/2$ is a $3$-power implies $q^2-q+1$ and $(q+1)/2$ to be 3-powers, which is false as  $\gcd(q^2-q+1,(q+1)/2)\in\{1,3\}$.
\end{proof}

\bp[Proof of Theorem {\rm \ref{td43}}] The result follows from Lemma \ref{ub1} (and the discussion at the beginning of
the section) if  $p|(q^4-q^2+1)$, and from Proposition \ref{D66} if $p \nmid (q^4-q^2+1)$.
\enp

\section{The case of ${}^2F_4(q)$}


In this section we prove Theorem \ref{mth1} for $G={}^2F_4(q)$, $q=2^{2k+1}$.

\bl{f22}
Theorem {\rm \ref{mth1}} is true for $G= {}^2F_4(2)'$ or ${}^2F_4(2)$.\el

\bp
If and $\ell\neq p$ then the result  follows by inspection of the Brauer character table of $G$ \cite{JLPW}. If $p=\ell=13$ then this follows from \cite{Z2}. So we are left with $p=3,5$.

Note that $G$ has no element of order 9 or 25 and all elements of order 3 and of order 5 are conjugate. In addition, $G$ contains  a subgroup $H\cong {\rm PSL}_2(25)$, so we can assume that $g\in H$.
If $|g|=3$ then the result follows from \cite[Lemma 3.3]{TZ8}.

Let $|g|=5$. Then we can assume that $g$ is contained in  a subgroup $K\cong {\rm PSL}_2(9)$. If $1_K\neq \tau\in\Irr_5(K)$ and $\deg\tau(g)<5$ then $\dim\tau=4$ (\cite[Lemma 3.3]{TZ8}). Therefore, if the lemma is false then $\phi|_K=a\tau+b\cdot 1_G$.
Let $x\in K$ be of order 3. Then $\tau(x)=2$, so   $\phi(x)=2a+b$, where $a\geq 1$.
This implies $\phi(1)\leq 2\phi(x)$, which contradicts the data in the Brauer character table of $G$ for $\ell=5$.    \enp

 So in what follows we assume $q>2$. Observe that if Sylow $p$-subgroups of $G={}^2F_4(q)$ are cyclic, then $p$ divides $q^2-q+1$ or  $q^4-q^2+1$.

\begin{lemma}\label{G2F4}
Theorem {\rm \ref{mth1}} is true if is true if Sylow p-subgroups of G are not cyclic. \end{lemma}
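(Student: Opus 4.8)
The plan is, for each relevant prime $p$, to find a subgroup $H\le G$ that contains a full Sylow $p$-subgroup of $G$—so that a conjugate of $g$ lies in $H$—and in which the minimum-polynomial problem is already settled, and then to use that $\deg\phi(g)\ge\deg\tau(g)$ for every constituent $\tau$ of $\phi|_H$ while $\deg\phi(g)\le|g|$ always (as $\ell\nmid|g|$); it then suffices to exhibit one constituent $\tau$ with $\deg\tau(g)=|g|$. Since $q$ is even, $p$ is odd, and $|G|_{2'}=(q-1)^2(q+1)^2(q^2+1)^2(q^2-q+1)(q^4-q^2+1)$, i.e. $\Phi_1^2\Phi_2^2\Phi_4^2\Phi_6\Phi_{12}$ evaluated at $q$. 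By Lemma~\ref{sylow}(ii) and the remark preceding the lemma, a non-cyclic Sylow $p$-subgroup forces $p$ to divide one of $q-1,\,q+1,\,q^2+1$ (the cyclotomic factors occurring with multiplicity $2$): the cyclic cases are exactly $p\mid(q^2-q+1)$ or $p\mid(q^4-q^2+1)$, and no odd prime divides both of these, so any other odd prime divisor of $|G|_{2'}$ divides one of the three factors above (for instance $3\mid(q+1)$ and $5\mid(q^2+1)$). I treat the three divisibility cases in turn.

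If $p\mid(q^2+1)$, I use the base group $H={}^2B_2(q)\times{}^2B_2(q)$ of the maximal subgroup ${}^2B_2(q)\wr2$; as $|{}^2B_2(q)|=q^2(q^2+1)(q-1)$ and $|G|_p=|q^2+1|_p^2$, $H$ contains a Sylow $p$-subgroup. Writing a conjugate of $g$ as $(g_1,g_2)$ and reordering the two factors so that $|g_1|=|g|\ge|g_2|$, we have $g_1\ne1$; faithfulness of $\phi$ (recall $G$ is simple) yields a constituent $\tau=\tau_1\otimes\tau_2$ of $\phi|_H$ with $\tau_1\ne1_{{}^2B_2(q)}$, and Lemma~\ref{sz1} (valid as $\ell\ne2$) gives $\deg\tau_1(g_1)=|g_1|=|g|$; since $\tau_1(g_1)$ then realizes all $|g|$-th roots of unity, so does $\tau(g)$, whence $\deg\tau(g)=|g|$. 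If $p\mid(q-1)$, then the eigenvalues of $g$ are $(q-1)$-th roots of unity, hence lie in $\FF_q^\times$, so $g$ lies in a maximally split torus and therefore in a proper parabolic subgroup of $G$; the equality $\deg\phi(g)=|g|$ then follows from the parabolic reduction (as in \cite{DZ1}, or from Lemma~\ref{bt4} together with a Hall–Higman argument in the unipotent radical of the kind used in Corollary~\ref{hh2}).

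The substantial case is $p\mid(q+1)$; since $3\mid(q+1)$ here, it includes $p=3$. I take $H\cong\SU_3(q)$ inside the maximal subgroup $\SU_3(q){:}2$; because $|\SU_3(q)|=q^3(q-1)(q+1)^2(q^2-q+1)$ matches $|G|_p=|q+1|_p^2\,|q^2-q+1|_p$, $H$ contains a Sylow $p$-subgroup and $g$ is conjugate into $H$ (note $Z(\SU_3(q))\cong C_3$ as $3\mid(q+1)$). For a nontrivial non-Weil constituent $\tau$ of $\phi|_H$, Corollary~\ref{cr1} gives $\deg\tau(g)=o(g)$, the order of $g$ modulo $Z(\GU_3(q))$. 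If $p\ne3$, then $Z(H)=C_3$ is coprime to $p$, so $o(g)=|g|$ and a single non-Weil constituent finishes the case. If $p=3$, then either $g^{|g|/3}\notin Z(H)$, so again $o(g)=|g|$ and one non-Weil constituent suffices, or $g^{|g|/3}\in Z(H)$ and $o(g)=|g|/3$, in which case I argue with central characters: the map $\mu\mapsto\mu^{|g|/3}$ splits the $|g|$-th roots of unity into three cosets indexed by the cube roots of unity, a constituent with a fixed $Z(H)$-character meets only one coset, and a non-Weil constituent with that central character attains all $o(g)=|g|/3$ roots in its coset by Corollary~\ref{cr1}; providing one non-Weil constituent for each of the three central characters of $Z(H)$ thus recovers all $|g|$ roots and gives $\deg\phi(g)=|g|$.

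The crux—and the main obstacle—is the existence of the required non-Weil constituents of $\phi|_H$: for every $p\mid(q+1)$ one must show $\phi|_H$ has a nontrivial constituent that is not a Weil representation, and for $p=3$ one needs such a constituent with each of the three central characters of $Z(H)$. This is the ${}^2F_4(q)$-analogue of Lemmas~\ref{ti1} and \ref{r8r}, established there for $\SL_3^\pm(q)$ inside $G_2(q)$ and ${}^3D_4(q)$. The geometric fixed-point argument of Lemmas~\ref{a12}–\ref{ti1} does not transfer to this embedding, so I would instead use the Gelfand–Graev characters of $H=\SU_3(q)$: since $Z(H)\cong C_3$, there are three of them, $\Gamma_0,\Gamma_1,\Gamma_2$, distinguished by their central characters, and each $\Gamma_j^\circ$ is the Brauer character of a projective module containing neither the trivial nor any Weil character. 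Hence a non-Weil nontrivial constituent with central character $j$ occurs in $\phi|_H$ precisely when $(\phi|_H,\Gamma_j^\circ)'>0$, which, exactly as in \eqref{eq2}, reduces to an inequality of the form $\phi(1)+(q-1)\phi(t)+q\phi(u)>0$ at a transvection $t$ and a regular unipotent $u$ of $H$, refined by central character. Verifying these inequalities for each $j$ and each congruence class of $\ell$ requires the generic character table of ${}^2F_4(q)$ together with its known $\ell$-decomposition matrices, and this case analysis is where the real work lies and is the principal obstacle.
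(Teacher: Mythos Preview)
Your treatment of $p\mid(q^2+1)$ via the base group of ${}^2B_2(q)\wr 2$ matches the paper. For $p\mid(q-1)$ the paper uses the \emph{same} subgroup ${}^2B_2(q)\times{}^2B_2(q)$ (note $|{}^2B_2(q)|=q^2(q-1)(q^2+1)$, so this product also contains a Sylow $p$-subgroup when $p\mid(q-1)$), and then Lemma~\ref{sz1} finishes the case in one line. Your parabolic argument is not wrong in spirit, but the sentence ``the eigenvalues of $g$ are $(q-1)$-th roots of unity, hence lie in $\FF_q^\times$, so $g$ lies in a maximally split torus'' is not a proof for an exceptional group: you need to argue via Lemma~\ref{sylow}(iii),(iv) that the Sylow $p$-subgroup sits inside the torus of order $(q-1)^2$, and then that this torus lies in a Borel. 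The ${}^2B_2(q)^2$ route avoids all of this.

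The genuine gap is the case $p\mid(q+1)$. You correctly identify that Corollary~\ref{cr1} reduces everything to producing non-Weil constituents of $\phi|_{\SU_3(q)}$ (one for each central character when $p=3$), and you propose to do this by a Gelfand--Graev inequality of type \eqref{eq2}. But you then concede that verifying this inequality across all $\ell$ using the ${}^2F_4(q)$ character table and decomposition matrices ``is where the real work lies and is the principal obstacle.'' That is not a proof; it is a proof \emph{plan}, and one that would require substantial case analysis you have not carried out.

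The paper sidesteps this entirely. After noting that (under the hypothesis $\deg\theta(g)<|g|$) \cite[Lemma~6.1]{TZ8} forces $g$ to lie in a maximal torus of $\SU_3(q)$ of order $(q+1)^2$ (and pins down $q=8$, $|g|=9$ when $p=3$), the key observation is that this same torus is a maximal torus of a subgroup $H_1\cong\Sp_4(q)$ of $G$. One then applies Lemma~\ref{p4a}, which already establishes $\deg\phi(g)=|g|$ for reducible $p$-elements of $\Sp_{2n}(q)$ with $q$ even. No Gelfand--Graev computation, no decomposition matrices of ${}^2F_4(q)$, no splitting by central character: the whole $p\mid(q+1)$ case is two sentences. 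The insight you are missing is to \emph{change the subgroup} from $\SU_3(q)$ to $\Sp_4(q)$ once you know $g$ sits in the $(q+1)^2$-torus.
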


\begin{proof}   Suppose the contrary, that is, $\deg\theta   (g)<|g|$.
As a \syl is not cyclic, one observes that $p|(q^4-1)$. If $p|(q-1)$ or $p|(q^2+1)$ then $g$ is contained in a direct product of two copies of ${}^2B_2(q)$ (see \cite[Table 1]{Z4}). In this case
 the result follows from that for ${}^2B_2(q)$. 

 Suppose that $p|(q+1)$. If $p=3$ then
 $g$ is contained in a subgroup  $H\cong \SU (3,q)$ \cite[Table 1]{Z4}. Then \cite[Lemma 6.1]{TZ8} implies $q=8$, $|g|=9$ and $g$ is contained in a maximal torus of $H$ of order $(q+1)^2$. This is also true if $p>3$. This torus is also a maximal torus of   a subgroup $H_1 \cong \Sp_4(q)$. So we can assume $g\in H_1$, and the result follows from Lemma \ref{p4a}.\end{proof}

\begin{lemma}\label{g2r1}
Theorem {\rm \ref{mth1}} is true if  $p|(q^2-q+1)$.\end{lemma}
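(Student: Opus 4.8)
The plan is to pass to a subgroup $H\cong\SU_3(q)$ whose Coxeter (Singer) torus is the $\Phi_6$-torus of $G$, and then to invoke the $\SU_3$-result of Corollary~\ref{cr1}. First I note that we are in the cyclic-Sylow situation, so $p\neq 3$: for $q=2^{2k+1}$ one has $3\mid(q+1)$ with $l_2=2$, so Sylow $3$-subgroups are non-cyclic and already fall under Lemma~\ref{G2F4}. Hence $p\geq 5$ and the multiplicative order $m_p$ of $q$ modulo $p$ is $6$. Since $\Phi_6(q)=q^2-q+1$ divides $|G|_{r'}$ to the first power, a Sylow $p$-subgroup of $G$ is cyclic and contained in the torus $T_6$ of order $q^2-q+1$, so by Lemma~\ref{cf6}(i) every $p$-element of $G$, in particular $g$, is conjugate into $T_6$. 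By the subgroup structure of $G$ (\cite[Table 1]{Z4}), $T_6$ is the Coxeter torus of a subgroup $H\cong\SU_3(q)<G$; moreover, since $m_p=6$ the order of $q^2$ modulo $p$ is $3$, so the eigenvalues of $g$ on the natural module of $H$ form a single Frobenius orbit of length $3$, whence $g$ is irreducible in $H$. After conjugating I may therefore assume $g\in H$ is irreducible.

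If $\ell=p$ (or $\ell=0$) the result follows from \cite{Z2}, the Sylow $p$-subgroups being cyclic, so I may assume $\ell\neq 0,p$. Since $Z(G)=1$ and $g\neq 1$, the element $g$ is not in $\ker\theta$, whence $\theta|_H$ has a non-trivial irreducible constituent $\phi$; as $p\neq 3$ we have $o(g)=|g|$. If $\phi$ is not a Weil representation, Corollary~\ref{cr1} gives $\deg\phi(g)=|g|$ at once. If $\phi$ is a Weil representation, I would argue directly: since $g$ is irreducible, its non-trivial powers are regular semisimple elements of $T_6$ lying outside $Z(H)$, and a Weil (Brauer) character of $\SU_3(q)$ vanishes on such elements (this is visible in the character table in \cite{Ge}); as $\ell\neq p$, the restriction $\phi|_{\lan g\ran}$ is then the multiple $\tfrac{\phi(1)}{|g|}\,\rho^{\reg}_{\lan g\ran}$ of the regular character, so $\phi(g)$ carries every $|g|$-th root of unity as an eigenvalue and again $\deg\phi(g)=|g|$. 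In all cases the minimum polynomial of $\phi(g)$ divides that of $\theta(g)$, so $\deg\theta(g)\geq\deg\phi(g)=|g|$; since always $\deg\theta(g)\leq|g|$, equality holds.

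The main obstacle is the Weil case in the second paragraph: the Weil representations are exactly the ones excluded in Corollary~\ref{cr1}, so they have to be handled separately, and the crux is the vanishing of the Weil characters of $\SU_3(q)$ on the non-central elements of the Coxeter torus. This is precisely what separates the present irreducible situation from the reducible one (elements of order dividing $q+1$), where by Proposition~\ref{pr5} a Weil representation really does drop the degree to $|g|-1$. An alternative that sidesteps the explicit Weil computation would be to prove that $\theta|_H$ always contains a non-trivial constituent that is not Weil, as was done for $G_2(q)$ and ${}^3D_4(q)$ in Lemmas~\ref{ti1} and~\ref{r8r}; but that approach needs refined character-value data for ${}^2F_4(q)$ and looks more laborious here.
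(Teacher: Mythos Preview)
Your reduction to $H\cong\SU_3(q)$ is exactly the paper's approach. The paper then finishes in one line by citing \cite[Proposition~6.1(ii)]{TZ8}: that result already shows that if $\deg\phi(g)<|g|$ in $\SU_3(q)$ with $p\mid(q^2-q+1)$, then one must have $|g|=q^2-q+1$ (and $\phi$ Weil), hence $(3,q+1)=1$; but $q=2^{2m+1}$ forces $3\mid(q+1)$, a contradiction. So there is no need to treat the Weil case separately.

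Your separate Weil argument contains a genuine error. The unipotent Weil character $\chi_{q^2-q}$ of $\SU_3(q)$ does \emph{not} vanish on non-identity elements of the Coxeter torus: by Lemma~\ref{sy1} (or simply from $q^2-q\equiv -1\pmod{q^2-q+1}$) one has $\chi_{q^2-q}(t)=-1$ for every such $t$, and this persists for its $\ell$-modular reduction. Thus the restriction to $\langle g\rangle$ is $\tfrac{q^2-q+1}{|g|}\,\rho^{\reg}_{\langle g\rangle}-1_{\langle g\rangle}$, not a multiple of the regular character as you claim. The desired conclusion $\deg\phi(g)=|g|$ is still true, but for a reason you do not supply: since you correctly noted $p\neq 3$ while $3\mid(q^2-q+1)$, one has $|g|\mid(q^2-q+1)/3<q^2-q+1$, so the coefficient $\tfrac{q^2-q+1}{|g|}-1$ of $1_{\langle g\rangle}$ is still positive and Lemma~\ref{tr1} applies. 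As written, however, your proof of the Weil case rests on a false vanishing claim.
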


\bp  Suppose the contrary. As $q^2-q+1$ divides the order of a subgroup $H\cong \SU_3(q)$, it follows that
$H$ contains a \syl of $G$. Therefore, we assume that $g\in H$.
By \cite[Prop. 6.1(ii)]{TZ8}, $3|(q+1)$, which is impossible since $q=2^{2m+1}$.\enp

The above analysis reduces the proof of Theorem {\rm \ref{mth1}}
 to the case where $p|(q^4-q  ^2+1)$; in particular, the Sylow $p$-subgroups are cyclic.
In view of \cite{Z2} and \cite{Z4}, we may assume that $\ell\neq p$ and that $\phi$ is not liftable.
Note that
 $$q^4-q  ^2+1=(q^2+q\sqrt{2q}+ q+\sqrt{2q}+1)(q^2-q\sqrt{2q}+q-\sqrt{2q}+1),$$
 in fact, $G$ contains maximal tori $T_1,T_2$ of these orders so we may assume that $g\in T_1$ or $ g\in T_2$. The rest of the section is therefore devoted to the proof of

\begin{propo}\label{f44}
Theorem {\rm \ref{mth1}} is true if $p|(q^4-q^2+1)$
unless possibly when $\ell=3$, $q=8$, $p=109$ and   $\phi=\phi_{21}$ in notation of {\rm \cite{Hi2}}.
\end{propo}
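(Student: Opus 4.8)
The plan is to follow the strategy already used for ${}^3D_4(q)$ in Lemma \ref{ub1}. Since $p\mid(q^4-q^2+1)$, the Sylow $p$-subgroups of $G$ are cyclic and $g$ lies in one of the two cyclic maximal tori $T\in\{T_1,T_2\}$ whose orders multiply to $q^4-q^2+1$. As $|T|$ is coprime to $|G|/|T|$, every $1\neq t\in T$ is regular with $C_G(t)=T$, so the hypotheses of Lemmas \ref{sy1} and \ref{cs2} are met. By the reductions preceding the proposition I may assume $\ell\neq p$ and that $\phi$ is not liftable, so Lemma \ref{cs2} leaves only two possibilities: either $(ii)$ $\phi(t)=0$ for every $\ell'$-element $t\in T\setminus\{1\}$, or $(i)$ $\phi$ is unipotent and constant on the $\ell'$-elements of $T\setminus\{1\}$.

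In case $(ii)$, set $C=\langle g\rangle\leq T$; then $\phi(c)=0$ for all $1\neq c\in C$, so by Lemma \ref{tr1} with $a=0$ the restriction $\phi|_C$ is a positive multiple of $\rho_C^{\reg}$. Hence every $|g|$-th root of unity is an eigenvalue of $\phi(g)$, and since $\ell\neq p=|g|$ its minimal polynomial is $x^{|g|}-1$, giving $\deg\phi(g)=|g|$. The real content is therefore case $(i)$, where $\phi$ is a non-liftable unipotent Brauer character. Writing $c=\phi(g)=\phi(t)$ for $1\neq t\in T$, Lemma \ref{tr1} shows that $\deg\phi(g)=|g|$ as soon as $c\geq 0$, or $c<0$ and $-c\,(|g|-1)<\phi(1)$. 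Thus it suffices to bound $\phi(t)$ from below.

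To produce such a bound I would combine the decomposition matrix of $G$ from \cite{Hi2} with Lemma \ref{sy1}: each irreducible unipotent ordinary character is constant on $T\setminus\{1\}$ with value in $\{0,1,-1\}$ (in particular $\StC(t)=\pm1$). Reading off, for each of the finitely many odd primes $\ell$ admitting non-liftable unipotent characters, the expression of every such $\phi_i$ as a $\ZZ$-linear combination of ordinary unipotent characters, one evaluates $\phi_i(t)$ as the corresponding integer combination of the values $0,\pm1$. For all but one configuration the resulting $-\phi_i(t)$ is small, while $\phi_i(1)$ exceeds $|T|\cdot(-\phi_i(t))$ because the smallest nontrivial Brauer character degree of $G$ grows like a high power of $q$, whereas $|T|$ is of order $\sim q^2$; so the criterion of Lemma \ref{tr1} applies and $\deg\phi(g)=|g|$.

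The main obstacle, and the source of the stated exception, is that several entries of the decomposition matrix of $G$ are not determined exactly in \cite{Hi2} but only bounded above, exactly as happened for ${}^3D_4(q)$. For most such entries the available upper bound still forces $-\phi_i(t)$ to be small enough relative to $\phi_i(1)/|T|$, so the inequality closes. The delicate bookkeeping is to check that no triple $(\ell,q,p)$ escapes this, by weighing the explicit degrees and the upper bounds for the undetermined decomposition numbers against $|T_1|$ and $|T_2|$. The one configuration where the known data are too weak to pin $\phi(t)$ down sharply enough is $\ell=3$, $q=8$ (so $p=109$ and $|T_1|=109$): there the relevant character $\phi_{21}$ has $\phi_{21}(1)<64692$, and the undetermined decomposition number leaves open whether $-\phi_{21}(t)\cdot 108<\phi_{21}(1)$, so this case must be excluded from the conclusion.
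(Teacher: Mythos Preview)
Your plan is exactly the paper's: reduce via Lemma \ref{cs2} to non-liftable unipotent Brauer characters, express each as an integral combination of ordinary unipotent characters using the decomposition data of Hiss \cite{H11} and Himstedt \cite{Hi}, read off the constant value on $T_i\setminus\{1\}$ from Table~3, and feed the result into Lemma \ref{tr1} (packaged as Lemma \ref{h11}). The paper executes this case-by-case in Lemma \ref{A-F}, splitting according to which cyclotomic factor $\ell$ divides.

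One point needs correcting. You assert that in the exceptional configuration ``$\phi_{21}$ has $\phi_{21}(1)<64692$''. This is not known; what is known is the lower bound $\phi_{21}(1)\geq 64624$ from Lemma \ref{h22}. The exception arises the other way round: the available upper bounds on the undetermined decomposition numbers give only $-\phi_{21}(t)\cdot(|T_1|-1)\leq 78300$ when $q=8$, which is not below $64624$, so Lemma \ref{tr1} cannot be applied. The bound $64692$ in Theorem \ref{mth1} is a \emph{conditional} refinement (Remark \ref{for9.4}): if $\deg\phi_{21}(g)<|g|$ then $1_{T_1}$ is absent from $\phi_{21}|_{T_1}$, so $108\mid\phi_{21}(1)$, forcing $\phi_{21}(1)\geq 64692$.
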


First we recall

\bl{h22}
Let $d_\ell$ be the minimum degree of a nontrivial $\ell$-modular \irr of $G={}^2F_4(q)$ with $q>2$.

\begin{enumerate}[\rm(i)]
\item \cite[Theorem 6.1]{Hi2} If $\ell>3$ then   $d_\ell\geq (q-1)(q+1)^2(q^2-q+1)\sqrt{q/2}$.

\item \cite[Theorem 1.4]{Ti6} If $\ell=3$ then  $d_3\geq (q -1)(q^4 + q^3 +q)\sqrt{q/2}$.
\end{enumerate}
\el

Lemma \ref{tr1} implies the following:

\bl{h11}
Let $\phi$ be a non-trivial unipotent Brauer character of G, and $T\in\{T_1,T_2\}$.
Let $T'$ be the subgroup of $\ell'$-elements of $T$ and $1\neq t\in T'$.  If $\phi(t)\geq 0$ or $a=\phi(t)<0$ and $-a(|T|-1)<\phi(1)$ then $\phi|_T$ contains every $\nu\in\Irr T$. In particular, this holds if $q>3$, $a<0$ and $-a<q(q^3+q^2+1)/8$.\el

\begin{proof} We have $|T|-1\leq q^2+q+(q+1)\sqrt{2q}$, so
 $$-a<\frac{(q -1)(q^4 + q^3 +q)\sqrt{q/2}}{(q+1)(q+\sqrt{2q})}= \frac{(q -1)(q^4 + q^3 +q)}{(q+1)2+\sqrt{2q})}$$
and
 $$\frac{q-1}{(q+1)(2+\sqrt{2q})}>\frac{7}{9(2+\sqrt{16})}=\frac{7}{54}>1/8,$$
as $q\geq 8$. \end{proof}

Recall that we have to deal only with the cases where $\ell\neq p$ and $\phi$ is not  liftable. Therefore,
Lemma \ref{cs2} together with Lemma \ref{h11} reduces the proof to the case where $\phi$ is unipotent. Our strategy in proving Proposition \ref{f44}  is to show, using the $\ell$-decomposition numbers of $G$,  that either $\phi(t)\geq 0$ or $\phi(t)<q(q^3+q^2+1)/8$, and then use Lemma \ref{h11}.

\med
Hiss \cite{H11} has determined the decomposition numbers of $G={}^2F_4(q)$ modulo $\ell|(q^4-q^2+1)$ and $\ell|(q^2-q+1)$;  Himstedt \cite{Hi} has computed these for remaining $\ell\neq 2$. Note that Himstedt's tables involve some indetermined values, which leads to certain difficulties below, in particular, for $q=8$.

\med The degrees of the unipotent characters are available in Malle \cite{Ma}. In his notation these are $\chi_k$ with  $k=1\ld 21$. Note that $G={}^2F_4(q)$ has exactly two  maximal tori $T_1,T_2$ (up to conjugation) that satisfies the assumption of Lemma \ref{sy1}. These are of the aforementioned orders $|T_1|=q^2+q\sqrt{2q}+q+\sqrt{2q}+1$ and $|T_2|=  q^2-q\sqrt{2q}+q-\sqrt{2q}+1$. Let $\eta$ be as in
Lemma \ref{sy1}. Then $\eta$ can be computed by taking the congruences of $\chi_k(1)$ modulo $|T_i|$ for $i=1,2$. The result
is recorded in Hiss \cite[p. 886 and p. 884]{Hi}, and we display it in Table 3 below. Note that fourth column
lists the characters of $p_i$-defect 0 for $i=1,2$.

\bigskip
\begin{center}{Table 3: Unipotent character values at $1\neq t\in(T_1\cup T_2)$}

{\small
 \vspace{10pt}
\noindent\begin{tabular}{|l|c|c|c|c| }
\hline
  &  $\chi_{i}(t)=1$   & $\chi_{i}(t)=-1$ &$\chi_i(t)=0$   \\
\hline
$1\neq t\in T_1$& $i=1,4,5,6,13,15,16,19,20$ &  $i=7,8,10$   & $i=2,3,9,11,12,14,17,18,21 $
   \\
\hline
$1\neq t\in T_2$& $ i=1,4,7,8,12,17,18,19,20$ & $i=5,6,9$   &  $i=2,3,10,11,13,14,15,16,21$ \\
\hline
\end{tabular}}\end{center}

\bigskip

In what follows, we will use data of Table 3 without further referring. Also, denote
$$\phi_1=q-1,~\phi_2=q+1,~\phi_8'=q+\sqrt{2q}+1,~\phi_8''=q-\sqrt{2q}+1,~\phi_{8}=q^2+1,~\phi_{12}=q^2-q+1,$$
$$\phi'_{24}=q^2+q\sqrt{2q}+q+\sqrt{2q}+1=|T_2|,~~\phi''_{24}=q^2-q\sqrt{2q}+q-\sqrt{2q}+1=|T_1|,$$
and $\phi_{24}=q^4-q^2+1,$ so that $\phi_{24}=\phi'_{24}\phi''_{24}=|T_1|\cdot|T_2|$.

The notation of  \ir characters is as in \cite{Ma}. Note that their parametrization is the same as in Hiss \cite{H11} (where $\xi_j$ is used for $\chi_j$).

\subsection{Brauer characters}
Note that we only need to deal with non-liftable \ir Brauer characters as $\ell\neq p$. The set of such characters is denoted by $\Irr^0_\ell(G). $ Note that tori $T_1,T_2$ satisfy the assumption of Lemma \ref{sy1}. By Lemma  \ref{cs2}, every character in $\Irr^0_\ell(G) $ is unipotent and constant on the $\ell'$-elements of $T_i$, $i=1,2$. The result is trivial if the constant in question equals 0 (for $i\in\{1,2\}$), as in this case $\phi|_{T_i}$ is a multiple of the regular character $\rho_{T_i}^{\reg}$.
So below we only consider the cases where $\phi(t)\neq 0$ for $1\neq t\in T_i$. If $\ell$ divides $|T_i|)$ then we write $T_i'$ for the subgroup of $\ell'$-elements of $T_i$, $i=1,2$.

Every \ir Brauer character agrees on $\ell'$-elements with some  integral linear combination of
ordinary characters.  If an $\ell$-modular character $\tau$, say, agrees on $\ell'$-elements with
$\sum a_i\chi_i$, where the $\chi_i$'s are ordinary characters, we simply write $\tau=\sum a_i\chi_i$. (or $\tau=_{\ell'}\sum a_i\chi_i$.)

\bl{A-F} Let $\phi\in\Irr^0_\ell(G)$, $T\in\{T_1,T_2\}$, and $t \in T$. Then either $\phi(t)\geq 0$,
or $-\phi(t)<q(q^3+q^2+1)/8$, or $\ell=3,$ $q=8$ and $T=T_1$.
\el

\begin{proof}
(i) We start with primes $\ell$ for which Sylow $\ell$-subgroups are cyclic. This means that $\ell$ divides either $q^2-q+1$ or $|T_1|$ or $T_2$. These are the cases (a), (b), (c) below.

\med
(a) \st $\ell|(q^2-q+1)$. Then, by Hiss \cite[Theorem 4.5]{H11}, $|\Irr^0_\ell(G)|=2 $, and for $\phi_1, \phi_2\in \Irr^0_\ell(G)$ we have
$\phi_1=\chi_{11}-1_G$, $\phi_2=\chi_{4}-\chi_{11}-\chi_{19}-\chi_{20}+1_G$.
Then $\phi_1(t)=-1$ and $\phi_2(t)= 0$ for every $1\neq t\in T_1\cup T_2$.
So the result follows from Lemma \ref{h11}.

\med
(b) Let $\ell$ divide $|T_1|$. Then, by Hiss \cite[Theorem 4.7]{H11}, $|\Irr^0_\ell(G)|=4 $, and we have $\phi_1=\chi_{10}-1_G$, $\phi_{2}=\chi_{7}-\chi_5-\chi_{15}-\chi_{19}$, $\phi_3=\chi_8-\chi_{6}-\chi_{16}-\chi_{20}$ and $\phi_4=\chi_4 -\phi_1-\phi_2-\phi_3$. Then for every $1\neq t\in T_2$ we have $\phi_{1}(t)=-1$, $\phi_2(t)= \phi_3(t)=1$ and $\phi_4(t)=0$.

In turn, for every $1\neq t\in T_1'$ we have $\phi_{1}(t)= -2$, $\phi_2(t)= \phi_3(t)= -4$,
and $\phi_4(t)= 11$. So the result follows from Lemma \ref{h11}.

\med
(c) Let $\ell$ divide $|T_2|$. Then, by Hiss \cite[Theorem 4.6]{H11}, $|\Irr^0_\ell(G)|=4 $, and we have
$$\phi_{1}=\chi_{9}-1_G,\phi_2=\chi_7-\chi_5,\phi_3=\chi_8-\chi_{6},\phi_4=\chi_{4}-
\chi_{9}+1_G.$$
Then for every $1\neq t\in T_1$ we have $\phi_{1}(t)=-1$, $\phi_2(t)= \phi_3(t)=-2$, $\phi_4(t)=2$, and
for every $1\neq t\in T_2'$ we have $\phi_{1}(t)= -2$,
$\phi_2(t)=\phi_3(t)= 2$, and $\phi_4(t) =3$. 
So the result follows from Lemma \ref{h11}.

\med
(ii) Next we consider the cases where Sylow $\ell$-subgroups are not cyclic. Our main reference is  \cite{Hi2}.
Our ordering of the unipotent characters is as in \cite{H11} and \cite{Ma}, which is different from those in
\cite{Hi2}. So we indicate by arrows the correspondence of our characters to those in   \cite{Hi2}:
$$\begin{aligned}1_G=\chi_1\ra \chi_1,~\chi_2\ra \chi_4,~\chi_3\ra \chi_{18},~\chi_4\ra \chi_{21},~\chi_5\ra \chi_2,~\chi_6\ra \chi_3,~\chi_7\ra \chi_{19},\\
\chi_8\ra \chi_{20},~\chi_9\ra \chi_5,~\chi_{10}\ra \chi_6,~\chi_{11}\ra \chi_7,~\chi_{12}\ra \chi_8,~\chi_{13}\ra \chi_9,~\chi_{14}\ra \chi_{10},\\
\chi_{15}\ra \chi_{11},~\chi_{16}\ra \chi_{12},~\chi_{17}\ra \chi_{13},~\chi_{18}\ra \chi_{14},~
\chi_{19}\ra \chi_{15},~\chi_{20}\ra \chi_{16},~\chi_{21}\ra \chi_{17}.\end{aligned}$$

\med
(d) $\ell|(q^2-1)$. Here $|\Irr^0_\ell(G)|=0 $.

\med
$(e)$ $3\neq \ell|(q^2+1) $. Then $\Irr^0_\ell(G)=\{\phi_i: i=4,7,18,21 \}$ in notation of \cite{Hi2}. We have

$\phi_4=\chi_2-1_G$ so $\phi_4(t)=-1$ and   for all $1\neq t\in T_1\cup T_2$.

$\phi_7=\chi_{11}-\chi_2$ and $\phi_{18}=\chi_{3}-a\chi_{21}-\chi_{11}+\chi_2$, where $2\leq a\leq(q^2-2)/3, $ see \cite[Theorem 3.2]{Hi2}. Then $\phi_7(t)=\phi_{18}(t)=0$ for all $1\neq t\in T_1\cup T_2$.

Furthermore, we have
$$\phi_{21}=\chi_{4}-e\phi_{18}-d\chi_{21}-c\chi_{13}-b\chi_{12}-\phi_7-\phi_4,$$
where $1\leq b\leq (q+\sqrt{2q})/4$, $1\leq c\leq (q-\sqrt{2q})/4$, $2\leq d\leq (q^2+2)/3$, $2\leq e\leq (q+2)/2$,
 so $\phi_{21}(t)=2-c$ for all  $1\neq t\in T_1$ and $\phi_{21}(t)=2-b$ for all  $1\neq t\in T_2$.
 In both the cases $\phi_{21}(t)<q(q^3+q^2+1)/8$, so the result follows by Lemma \ref{h11}.

\med
(f) $ \ell=3 $. Then $(3,q^4-q^2+1)=1$ as $3|(q^2-1)$. In this case there are 22 unipotent Brauer characters and $\Irr^0_\ell(G) =\{\phi_{5,1}, \phi_i:i\in \{ 4,7,8,10,15,18,21\}$.

In this case the expressions of $\phi_i$ in terms of ordinary characters $\chi_j$ depend of parameters
which are not determined in full but satisfy certain inequalities. This are
$$2\leq a\le q, 0\leq b\le (q+\sqrt{2q})/4, 0\leq c\le (q-\sqrt{2q})/4,
2\leq d\le q^2,
1\leq e\le (q+2)/2, $$ $$
0\leq x_7\le q/2, 0\leq x_8\le(q+3\sqrt{2q}+4)/12, 0\leq x_{10}\le (q-2)/6, 1\leq x_{15}\le (q+1)/3,$$ $$
0\leq x_{18}\le q(q-1), 1\leq x_{21}\le q^3.$$

We have

 $\phi_{4}=\chi_{2}-1_G$, so $\phi_{4}(t)=-1$ for $1\neq t\in T_1\cup T_2;$
\med

$\phi_{5,1}=\chi_{20}-\chi_{21}$, whence $\phi_{5,1}(t)=1$ 
for $1\neq t\in T_1\cup T_2$.

\med
 $\phi_{8}=\chi_{12}-x_8\phi_{5,1}$. 
 So $\phi_{8}(t) 
 =1-x_8$ if $T=T_1$ and $-x_8$ if $T=T_2$. As $x_8<q(q^3+q^2+1)/8$, Lemma \ref{h11} applies.

\med
 $\phi_{7}=\chi_{11}-1_G-\phi_4-x_7\phi_{5,1}=\chi_{11}-\chi_{2}-x_7\phi_{5,1}$.
Note that $\chi_{11}(t) =0$ for $1\neq t\in T_1\cup T_2$.  So $\phi_{7}(t)=-x_7$, and
$x_7 <q(q^3+q^2+1)/8$.

\med
 $\phi_{10}=\chi_{14}-\phi_8-x_{10}\phi_{5,1}$.
Note that $\chi_{14}(t)=0$ for $1\neq t\in T_1\cup T_2$.
So $\phi_{10}(t)=-x_8-x_{10}$ if $t\in T_2$ and $1-x_8-x_{10}$ if $t\in T_1$.
Here $x_8+x_{10}<q+(q-2)/6<q(q^3+q^2+1)/8.$

\med
$\phi_{15}=\chi_{19}-x_{15}\phi_{5,1}$, 
where $1\leq x_{15}\le (q+1)/3$. As $\chi_{19}(t)=1$ for $1\neq t\in T_1\cup T_2$, we have
$\phi_{15}=1-x_{15}$. As above, $x_{15}<q(q^3+q^2+q)/8$ yields the result.


$\phi_{18}=\chi_{3}-\phi_7-x_{18}\phi_{5,1}-a\phi_{15}$. 
 So $\phi_{18}(t)=x_7-x_{18} +a(x_{15}-1)$
for $1\neq t\in T_1\cup T_2$. As $x_{18}+a<q(q^3+q^2+1)/8$, the result follows from Lemma \ref{h11}.

$\phi_{21}=\chi_4-\phi_{4}-\phi_7-x_{21}\phi_{5,1}-b\phi_8-c\phi_{10}-d\phi_{15}-e\phi_{18}$. 
So  $\phi_{21}(t)=2+x_7+d(x_{15}-1)-x_{21}-e(x_7-x_{18} +a(x_{15}-1)) -b\phi_8(t)-c\phi_{10}(t)=
2+(1-e)x_7+(d-ae)x_{15}-x_{21}+ex_{18}-a-d -b\phi_8(t)-c\phi_{10}(t)$.
In addition, $\phi_8(t)=1-x_8$ if $T=T_1$ and $-x_8$ if $T=T_2$, and $\phi_{10}(t)=1-x_8-x_{10}$ if $1\neq t\in T_1$
and $-(x_8+x_{10})$ if $t\in T_2$.  So $-b\phi_8(t)-c\phi_{10}(t)=-b(1-x_8)-c(1-x_8-x_{10})=(b+c)(x_8-1)+cx_{10}$ if $1\neq t\in T_1$,
and $-b(-x_8)-c(-x_8-x_{10})=(b+c)x_8+cx_{10}$ if $1\neq t\in T_2$. So
$$\phi_{21}(t)= \begin{cases}2-a-d+(1-e)x_7+(d-ae)x_{15}-x_{21}+ex_{18}+(b+c)(x_8-1)+cx_{10}& 1\neq t\in T_1,\\
2-a-d+(1-e)x_7+(d-ae)x_{15}-x_{21}+ex_{18}+(b+c)x_8+cx_{10}& 1\neq t\in T_2.
\end{cases}$$
Therefore
$$-\phi_{21}(t)\leq a+d+(e-1)x_7+(ae-d)x_{15}+x_{21}\leq   q+q^2+\frac{q^2}{4} +\frac{q(q-2)(q+3)}{6} +q^3<2q^3.$$
If $q>8$ then $2q^3<q(q^3+q^2+q)/8$, so the result follows by Lemma \ref{h11}.

Let $q=8$.    If $t\in T_2$ then
$$-\phi_{21}(t)\cdot (|T|-1)\leq 688 \cdot 36=24768
  <q(q-1)(q^3+q^2+1)\sqrt{q/2}=112 \cdot 577=64624,$$
so Lemma \ref{h11} yields the result.

 As $\phi_{21}$ is constant on $T_1\smallsetminus \{1\}$,
  all non-trivial $|t|$-roots of unity are \ei of $\phi_{21}(t)$. \end{proof}

\begin{remar}\label{for9.4}
{\em Observe that $|T_1|=109$, $|T_2|=37$. By \cite[Corollary 4.2]{Hi2}, $c=1$ for $q=8$. Then $-\phi_{21}(t)\leq -2+a+d+(e-1)x_7+(ae-d)x_{15}+x_{21}\leq -2+q+q^2+\frac{q^2}{2}+\frac{q^2+2q+4)(q+1)}{6}+q^3=6+64+32+ 111+512=725$ and $-\phi_{21}(t)(|T_1|-1)\leq 78300$. The lower bound for $\dim \phi$ suggested in
 \cite{Ti6}, see Lemma \ref{h22}, is $q(q-1)(q^3+q^2+1)\sqrt{q/2}$ for $q=8$ yields $\dim\phi\geq 64624$.
Note that if $1_{T_1}$ is not a constituent of $\phi|_{T_1}$ then $\dim\phi$ is a multiple of $|T_1|-1=108$.
As 64624 is not a multiple of 108, we conclude that $\dim\phi\geq 64692$.
 So if $t\in T_1$ then the question remains open.}
\end{remar}

\bp[Proof of Theorem {\rm \ref{mth1}}] The result follows from Lemma \ref{sz1} when $G={}^2B_2(q)$,
 Lemma \ref{re1} when $G={}^2G_2(q)$, Theorem \ref{g2o} and Lemma \ref{gg2} when $G=G_2(q)$, Theorem  \ref{td43} when $G={}^3D_4(q)$, and from Lemmas \ref{G2F4}, \ref{g2r1}, Proposition \ref{f44}, and Remark \ref{for9.4} when $G={}^2F_4(q)$.
\enp

\end{document}